% v 1n - 6 mar. 2023 - towards arXiv e-print
\documentclass[reqno]{amsart}

\usepackage{amsmath,amssymb,amsthm,amsfonts,mathtools}
\usepackage{hyperref}
\usepackage[english]{babel}
\usepackage{cancel}
\usepackage{xcolor}
\usepackage{csquotes}
\usepackage{inputenc}
\usepackage{fontenc}
\usepackage{tikz}

% Symbols
\newcommand{\cA}{{\mathcal A}}
\newcommand{\cB}{{\mathcal B}}
\newcommand{\cD}{{\mathcal D}}
\newcommand{\cF}{{\mathcal F}}

\newcommand{\cK}{{\mathcal K}}
\newcommand{\cL}{{\mathcal L}}
\newcommand{\cM}{{\mathcal M}}

\newcommand{\cQ}{{\mathcal Q}}

\newcommand{\cU}{{\mathcal U}}

% Catherine's wonderful macros
\newcommand{\myspace}{\qquad\qquad\qquad}

% Beppe Modica's macro

% Theorems
\newtheorem{theorem}{Theorem}[section]
\newtheorem{lemma}[theorem]{Lemma}
\newtheorem{proposition}[theorem]{Proposition}
\newtheorem{remark}[theorem]{Remark}
\newtheorem{remarks}[theorem]{Remarks}

\newtheorem{assumptions}[theorem]{Assumptions}

\newtheorem{definition}[theorem]{Definition}

\newtheorem{problem}[theorem]{Problem}
\numberwithin{equation}{section}
%\renewcommand{\theequation}{\thesection.\arabic{equation}}

%\date{\today}
\date{}

% Top matter

\begin{document}

\title[]{Riccati-based solution to the optimal control
of linear evolution equations with finite memory}

\author{Paolo Acquistapace}
\address{Paolo Acquistapace, Universit\`a di Pisa, Dipartimento di Matematica ({\em retired}),
Largo Bruno~Pontecorvo 5, 56127 Pisa, ITALY 
}
\email{paolo.acquistapace(at)unipi.it}

\author{Francesca Bucci}
\address{Francesca Bucci, Universit\`a degli Studi di Firenze,
Dipartimento di Matematica e Informatica,
Via S.~Marta 3, 50139 Firenze, ITALY
}
\email{francesca.bucci(at)unifi.it}

\makeatletter
\@namedef{subjclassname@2020}{\textup{2020} Mathematics Subject Classification}
\makeatother

\subjclass[2020]%{\dots} 
{49N10, % Linear-quadratic optimal control problems
35R09; % Integro-partial differential equations
93C23, % (2000-now) Control/observation systems governed by functional-differential equations
49N35} %(1991-now) Optimal feedback synthesis
% 34K30} %(1980-now) Functional-differential equations in abstract spaces
% 49J27 (1991-now) Existence theories for problems in abstract spaces
% 93B52 (1991-now) Feedback control 

\keywords{integro-differential equation, linear-quadratic problem, evolution equations with memory, closed-loop optimal control, Riccati equation}

\begin{abstract}
In this article we study the optimal control problem with quadratic functionals for a linear Volterra integro-differential equation in Hilbert spaces.
With the finite history seen as an (additional) initial datum for the evolution, following the variational approach utilized in the study of the linear-quadratic problem for memoryless infinite dimensional systems, we attain a closed-loop form of the unique optimal control via certain operators that are shown to solve a coupled system of  
quadratic differential equations.
This result provides a first extension to the partial differential equations realm 
of the Riccati-based theory recently devised by L.\,Pandolfi in a finite dimensional context.
\end{abstract}

\maketitle

% INTRO

\section{Introduction} \label{s:intro}
Given a linear control system $y'(t)=Ay(t)+Bu(t)$, $t\in [0,T)$, in a Hilbert space $H$,
existence and uniqueness for the Cauchy problems associated with the (differential) Riccati equations 
\begin{equation*}
P'(t)+A^*P(t)+P(t)A-P(t)BB^*P(t)+Q=0 \qquad t\in [0,T)
\end{equation*}
plays a central role in the study of the associated optimal control problems with quadratic functionals on a finite time interval.~(We recall that the operator $Q$ occurs in the functional
in connection with the observed state.)
Indeed, the Riccati equation corresponding to the minimization problem is expected to yield the -- hopefully unique -- operator $P=P(t)$ which enters the feedback representation of the optimal control, thereby allowing its synthesis.

The question of well-posedness of Riccati equations is particularly interesting in the context of partial differential equations (PDE), where major technical difficulties stem from the presence of unbounded control operators $B$ -- these are brought about, in particular, by the modeling of boundary inputs.
The analysis become even more challenging in the case of hyperbolic or composite dynamics:
the actual meaning of the gain operator $B^*P(t)$ that occurs in the quadratic term of the Riccati equation (or lack thereof) is the central issue that must be tackled, even for the only purpose
of existence of a solution to the Riccati equation.

Forty years of research on this subject have brought about distinct functional-analytic frameworks that mirror parabolic PDE, hyperbolic PDE, and also certain systems of hyperbolic-parabolic PDE, along with respective Riccati theories. 
The reader is referred to the Lecture Notes \cite{las-trig-lncis} for a brief overview and significant illustrations on the subject, and to the monographs by Bensoussan~{\em et al.} \cite{bddm} and by Lasiecka and Triggiani \cite{las-trig-redbooks} for an ample treatment of the Riccati theories pertaining to the finite time horizon problem until 2000.
A literature review on the major contributions and latest achievements in the study of the linear-quadratic (LQ) problem for coupled systems of hyperbolic-parabolic PDEs -- now spanning more than two decades -- is found in the recent \cite{ac-bu-uniqueness_2023} and \cite{tuffaha-stoch_2023}
(in the deterministc and stochastic cases, respectively).
 
\smallskip 
In this paper we are instead interested in the LQ problem for integro-differential equations.
These arise, as is widely known, in the modeling of certain diffusion processes and other phenomena
that exhibit hereditary effects such as, e.g., viscoelasticity; see for instance the monograph
\cite{renardy-etal_1987} by Renardy~{\em et al.}
The class of evolutions we consider is described by problem \eqref{e:state-eq_0}, which is introduced in full detail in Section \ref{ss:framework} below. 
It is a reasonably simple model equation, which reduces to the archetypical differential system
in the absence of the integral term, with the control operator $B$ here assumed bounded.
We believe the setting and the chosen approach should also serve as a baseline for further developments.

When it comes to the LQ problem for integro-differential PDE,
the picture of the existing literature is not as complete as in the memoryless case.
On one hand, the work by Cannarsa {\em et al.} \cite{cannarsa-etal_2013} -- as first, in 2013 --
deals with the Bolza problem for semilinear evolution equations, with the dynamics displaying an {\em infinite} memory; the analysis encompasses both parabolic and hyperbolic equations with nonlocal terms (the respective results and methods of proof differ\footnote{Just to give a glimpse of some of the methods employed therein, we recall that in order to tackle a second order in time problem the authors appeal to the celebrated {\em history approach} introduced by Dafermos in 1970, which allows to utilize semigroup theory for the mathematical analysis of an equivalent coupled system satisfied by a suitable augmented variable.}, though).
Thus, \cite{cannarsa-etal_2013} deals with a more general problem than the LQ one;
results for the existence of an {\em open-loop} solution to the optimization problem are established, which apply to relevant physical evolutions.

On the other hand, our framework and goals are more specific: we aim at the LQ problem and its {\em closed-loop} optimal solution, and more precisely, at a synthesis of the unique optimal control   
by way of solving a corresponding Riccati equation.
Thus, we must go back in time a bit.
The 1996 work \cite{pritchard-you_1996} by Pritchard and You addresses a similar problem
(more precisely, our integro-differential model can be subsumed under the one considered therein), in the same Hilbert space setting.
A semi-causal representation formula for the optimal control is established, with the feedback operator
depending on an another operator which is shown to solve a Fredholm integral equation.
In the authors' words, the said equation
\textquote{\dots plays a role similar to that of the operator Riccati equation}.

The question as to whether a Riccati-based theory is actually viable was addressed (and answered affirmatively) just recently by L.~Pandolfi \cite{pandolfi-memory_2018}, with a study restricted to an uncomplicated setting, namely, considering the integro-differential model \eqref{e:state-eq_0} in a finite dimensional space $H=\mathbb{R}^n$ and neglecting the generator $A$ (governing the free dynamics in the absence of memory).
% $x'=\int_0^t N(t-s)x(s)\,ds +B u(t)$.
A feedback representation of the optimal control is established, and in addition the operators involved in it are shown to solve a coupled system of three quadratic (matrix) equations. 

In this work we fully extend -- and to some extent push forward, see our Theorem~\ref{t:big-riccati} -- the results obtained in \cite{pandolfi-memory_2018} to the controlled integro-differential system \eqref{e:state-eq_0} in a true infinite dimensional context. 
In order to do so,
\begin{itemize}

\item
we adopt the Volterra equations perspective of \cite{pandolfi-memory_2018}, along with the consideration of the history as a component of the state (shared by \cite{cannarsa-etal_2013} as well), while

\item
we perform (and adapt) the plan carried out in the study of the LQ problem for 
{\em memoryless} control systems in infinite dimensional spaces, whose line of argument
can be summarized (in broad terms) as follows:
(i) a convex optimization argument brings about 
(ii) an operator $P(t)$, defined in terms of the optimal evolution;
(iii) $P(t)$ is shown to solve a Riccati equation;
(iv) as the (hopefully, unique) Riccati operator $P(t)$ enters the feedback law,
it renders its synthesis effective.
\end{itemize}
The formulation of the optimal control problem, our assumptions and main results, i.e.
Theorems~\ref{t:main} and \ref{t:big-riccati}, are made explicit in the next Section~\ref{ss:framework}.
An expanded outline of the paper found at the end of this section will provide guide and insight into the sequence of proofs.

\smallskip
In concluding this introduction we provide a %(non-exhaustive, 
(minimal, for obvious reasons) bibliographical selection of general textbooks as well as articles with specific focuses, the latter ones still
%pertinent to %
within control theory for linear models. 

An explicit account of the well-posedness and regularity results devised for abstract linear equations
in Banach spaces % and regularity of the corresponding solutions 
is beyond the scope of this article.
We point out and give credit to a few pioneering works from the late sixties on, such as
% see for instance 
\cite{friedman-shinbrot_1967}, \cite{dafermos_1970} and \cite{daprato-iannelli_1980}.
% in the last one Laplace transform methods come into play).
%
So, with regard to the broad topic of integro-differential equations the reader is referred to the monographs (listed in cronological order, along with the above-mentioned \cite{renardy-etal_1987}) by Gripenberg {\em et al.}~\cite{gripenberg-etal_1990} (in finite dimensional spaces), Pr\"uss \cite{pruess_1993} and Pandolfi \cite{pandolfi-book}; \cite{pandolfi-book} is especially valuable for the %state-of-the-art 
historical insight and up-to-date references, besides the discussion of modeling and analytical aspects.
% P.~Cannarsa-D.~Sforza (2002-2004), \dots, 
%
(In this connection we note that even a higher order PDE such as the Moore-Gibson-Thompson equation\footnote{The Moore-Gibson-Thompson (or Stokes-Moore-Gibson-Thompson) equation is a widely studied third order in time PDE arising from the linearization of a quasilinear model for the propagation of ultrasound waves.} has been related -- with different aims and in a different fashion -- 
to wave equations with memory; see \cite{delloro-pata_2017}, \cite{bucci-pandolfi_2020},
\cite{bucci-eller_2021}.)
%in order to attain interior and boundary regularity results for the corresponding solutions with non-%homogenous boundary data.  
% see also \cite{bongarti-etal_2022}.

% Then, it is important to emphasize that various 
% We remark that various 
Now, moving on to various 
% Various 
control-theoretic properties -- distinct from quadratic optimal control -- that have been explored and
established % with focus on 
in the case of evolution equations with (infinite, more often than finite) memory,
we recall that 
% These 
these include controllability (\cite{ivanov-pandolfi_2009}, \cite{guerrero-imanuvilov_2013}, \cite{chaves-silva-etal_2014,chaves-silva-etal_2017}, \cite{fernandezcara-etal_2016,fernandezcara-etal_2022}),
reachability (\cite{loreti-sforza_2010}, \cite{gamboa-etal_2016}), % P.~Loreti-D.~Sforza (2016), 
unique continuation (\cite{doubova-fernandezcara_2012});
observability and inverse problems via Carleman estimates 
(\cite{cavaterra-etal_2006}, \cite{loreti-etal_2017a,loreti-etal_2017b}); % \cite{kaltenbacher-etal_2022} (via optimizaztion ;
stability and uniform decay rates
(\cite{grasselli-squassina_2006}, \cite{chepyzhov-pata_2006}, \cite{conti-etal_2008}).
%(\cite{conti-etal_2022} provides a useful overview).
%
The mathematical tools utilized include: purely PDE methods, semigroup theory, harmonic analysis.
(The variety of notable advances in the study of % stability/stabilizability and more generally 
the long-time behaviour of solutions to semilinear and nonlinear equations with memory is inevitably left out.) 
% \cite{alabau-etal_2008} % semilinear

We note in particular that a line of investigation which has been followed specifically in the study of controllability is the reduction of the integro-differential equation to a system comprising a PDE and an ordinary differential equation; see \cite{chaves-silva-etal_2014,chaves-silva-etal_2017}. 
Whether this method could be pursued successfully in order to study the LQ problem and devise a Riccati-based theory, as well, is a question that is left open here.

Finally, for context and pertinent background about the optimal control of {\em stochastic} Volterra 
equations, see for instance the recent \cite{bonaccorsi-confortola_2020} along with its references. 

%former works in the $\infty$-dimensional case: 
%
%Friedman-Shilbrot
%V.~Barbu (1975); G. Da~Prato-M.~Iannelli (1980), G. Da~Prato-%M.~Iannelli-E.~Sinestrari (1985),
%D.~Sforza (1986-\hspace{1pt}),
%G.~Di Blasio (1994)

%long-time behaviour of linear and nonlinear evolutions: 
% M.~Fabrizio-C.~Giorgi-V.~Pata (2010); 
% Nicaise, Serge (F-UPHF-LMA); Pignotti, Cristina (I-LAQL-EIM)
% Asymptotic behavior of dispersive electromagnetic waves in bounded domains,
%Z. Angew. Math. Phys. 71 (2020), no. 3, Paper No. 76, 26 pp. 
%
% I.~Lasiecka {\em et al.} (2016-\hspace{1pt});
% M.~Bongarti-Lasiecka-Rodrigues (2021); \dots

%\smallskip

%\item[optimal control of stochastic Volterra eqn.'s:]
%
% S.~Bonaccorsi, F.~Confortola, E.~Mastrogiacomo (2012, 2015, \dots) 
% F.~Confortola-E.~Mastrogiacomo (2015, \dots).
%}

% THE INTEGRO-DIFFERENTIAL MODEL

\subsection{The integro-differential model, associated minimization problem} \label{ss:framework}
Let $H$ and $U$ be two separable Hilbert spaces, the {\em state} and {\em control} space, respectively.
Given $T>0$, we consider a linear Volterra integro-differential equation in the space $H$, and the corresponding Cauchy problem
\begin{equation}\label{e:state-eq_0}
\begin{cases}
w'(t)=Aw(t)+\displaystyle\int_0^t K(t-s) w(s)\,ds+Bu(t)\,, & t\in (0,T)
\\[1mm]
w(0)=w_0\in H\,, & 
\end{cases}
\end{equation}
under the following basic Assumptions on the operators $A$, $B$ and $K$ which appear in the state equation.
These operators describe the uncontrolled (or free) dynamics ($A$ and $K$) and the action of control functions ($B$), respectively; the ``memory kernel'' $K$ enters the convolution term which specifically accounts for a past history of the state variable.

% ASSUMPTIONS
 
\begin{assumptions}[\bf Basic Assumptions] \label{a:ipo_0} 
Let $H$, $U$ be separable complex Hilbert spaces. 
\begin{itemize}
\item
The closed linear operator $A\colon \cD(A)\subset H \to H$ is the infinitesimal generator of a strongly continuous semigroup $\{e^{tA}\}_{t\ge 0}$ on $H$, with $\|e^{tA}\|_{\cL(H)}\le C e^{\omega t}$ for all $t\ge 0$ and suitable constants $C, \omega$;

\item
$K\in L^2(0,T;\mathbb{R})$; 

\item 
$B\in \cL(U,H)$.
\end{itemize}

\end{assumptions} 

% REMARK

\begin{remarks}
\begin{rm}
Our analysis here focuses on a simple pattern for the integro-differential model.
In particular, (i) the hypothesis on the control operator $B$ covers the case of partial
differential equations (PDE) systems subject to {\em distributed} control.
The study of an integro-differential model in the presence of an {\em unbounded} control operator $B$ -- which is naturally brought about by boundary or point control actions -- is left to subsequent work.
(ii) On the other hand, the assumption that the memory kernel is real valued can be relaxed to $K\in L^2(0,T;\cL(H))$, with the computations carried out still valid, provided the operator $K(\cdot)$ commutes with the semigroup $e^{\cdot A}$.
(iii) The absence of (the realization of) a differential operator within the convolution term in \eqref{e:state-eq_0} is a restriction which is rendered milder in view of MacCamy's trick; see e.g. \cite[Sections~3.3 and 4.2]{pandolfi-book}. 
\end{rm}
\end{remarks}

\smallskip
To the state equation \eqref{e:state-eq_0} we associate the following quadratic functional over the preassigned time interval $[0,T]$:
\begin{equation} \label{e:cost}
J(u)=\int_0^T \left(\langle Qw(t),w(t)\rangle_H + \|u(t)\|_U^2\right)dt\,, 
\end{equation}
where the operator $Q$ simply satisfies 
\begin{equation*}
Q\in \cL(H)\,, \quad Q=Q^*\ge 0\,.
\end{equation*}
In Section~\ref{s:prelimin} we will derive a representation formula for the solution to the state equation in \eqref{e:state-eq_0}, as well as to the solutions to the family of Cauchy problems obtained taking as initial time $\tau\in (0,T)$ (in place of $\tau=0$), that is 
\eqref{e:family-cauchy} below.

\smallskip
The optimal control problem is formulated in the usual (classical) way.

% The Problem

\begin{problem}[\bf The optimal control problem] \label{p:problem-0}
Given $w_0\in H$, seek a control function $u\in L^2(0,T;U)$ which minimizes the
cost functional \eqref{e:cost}, where $w(\cdot)$ is the solution to \eqref{e:state-eq_0} 
corresponding to the control function $u(\cdot)$ (and with initial state $w_0$). 
\end{problem}

\begin{remark}
\begin{rm}
The solutions to \eqref{e:state-eq_0} are meant in a {\em mild} sense; see  
\eqref{d:mild-sln}.
\end{rm}
\end{remark}

\medskip

If $\tau\in (0,T)$ is given, and having set 
\begin{equation} \label{e:X_0}
\xi(\cdot) = w(\cdot)\big|_{[0,\tau]}\,, \quad \xi_0=w(\tau^+)\,, 
\quad X_0= \begin{pmatrix}\xi_0\\\xi(\cdot)\end{pmatrix}\,,
\end{equation}
we introduce the family of Cauchy problems
\begin{equation}\label{e:family-cauchy}
\begin{cases}
w'(t)=Aw(t)+\displaystyle{\int_\tau^t K(t-s) w(s)\,ds+\int_0^\tau K(t-s) \xi(s)\,ds}+Bu(t)\,, 
\; t\in (\tau,T)
\\[1mm]
w(\tau^+)=\xi_0
\end{cases}
\end{equation}
 and the associated cost functional
\begin{equation} \label{e:family-cost}
J_\tau(u,X_0)=\int_\tau^T \big(\langle Qw(t),w(t)\rangle_H + \|u(t)\|_U^2\big)\,dt\,.
\end{equation}

We will set $Y_\tau:= H\times L^2(0,\tau;H)$. 
% Pandolfi: M_\tau^2 - state space 

% MAIN RESULTS

\subsection{Main results}
Our main results establish the synthesis of the optimal control for our problem \eqref{e:state-eq_0}-\eqref{e:cost}, as certain operators $P_i$, $i=1,2,3$, which occur in its feedback representation -- as well as in the quadratic form that actualizes the optimal cost -- are entries of an operator matrix $P$ which is shown to be the unique solution to a quadratic Riccati-type equation.

% THM MAIN
 
\begin{theorem} \label{t:main}
With reference to the optimal control problem \eqref{e:family-cauchy}-\eqref{e:family-cost}, under
the Assumptions~\ref{a:ipo_0}, the following statements are valid for each $\tau\in (0,T)$.

\begin{enumerate}

\item[\bf S1.] 
For each $X_0\in Y_\tau$ there exists a unique optimal pair 
$(\hat{u}(\cdot,\tau;X_0),\hat{w}(\cdot,\tau;X_0))$
which satisfies 
\begin{equation*}
\hat{u}(\cdot,\tau;X_0)\in C([\tau,T],U)\,, \quad \hat{w}(\cdot,\tau;X_0)\in C([\tau,T],H)\,.
\end{equation*}

\smallskip

\item[\bf S2.] 
Given $t\in [\tau,T]$, the linear bounded operator  
$\Phi(t,\tau)\colon Y_\tau \longrightarrow Y_t$ defined by 
\begin{equation} \label{e:evolution-map} 
\Phi(t,\tau)X_0 :=
\begin{pmatrix}
\hat{w}(t,\tau;X_0)
\\
\hat{y}(\cdot)
\end{pmatrix}
\; \text{\small where} \; \hat{y}(\cdot)=
\begin{cases} 
\xi(\cdot) & \text{in $[0,\tau]$}
\\
\hat{w}(\cdot,\tau,X_0) & \text{in $[\tau,t]$}
\end{cases}
\end{equation}
is an evolution operator, namely, it satisfies %(the transition property)
\begin{equation*}
\Phi(t,t)=I\,, \qquad 
\Phi(t,\tau)=\Phi(t,\tau_1)\Phi(\tau_1,\tau) \quad \textrm{for \ $\tau\le \tau_1\le t\le T$.}
\end{equation*}

\smallskip

\item[\bf S3.]
There exist three bounded operators, denoted by $P_0(\tau)$, $P_1(\tau,s)$, $P_2(\tau,s,q)$ -- 
defined in terms of the optimal state and of the data of the problem
(see the expressions \eqref{e:riccati-ops} and \eqref{e:riccati-ops_2})  
--, such that the optimal cost is given by  
\begin{equation} \label{e:optimal-cost_1}
\begin{split}
\qquad J_\tau(\hat{u},X_0) &=\big\langle P_0(\tau) w_0,w_0\big\rangle_H
+ 2\text{Re}\, \int_0^\tau \langle P_1(\tau,s)\xi(s),w_0\big\rangle_H\,ds
\\[1mm]
& + \int_0^\tau \!\!\int_0^\tau \langle P_2(\tau,s,q)\xi(s),\xi(q)\big\rangle_H\,ds\,dq
\equiv \langle P(\tau)X_0,X_0\rangle_{Y_\tau}\,.
\end{split}
\end{equation}
$P_0(\tau)$ and  $P_2(\tau,s,q)$ are non-negative self-adjoint operators in the respective functional
spaces $H$ and $L^2(0,\tau;H)$.

\smallskip

\item[\bf S4.] 
The optimal control admits the following feedback representation
\begin{equation} \label{e:feedback}
\hat{u}(t,\tau;X_0)=-B^*P_0(t)\hat{w}(t;\tau,X_0)-\int_0^t B^*P_1(t,s)\hat{y}(s)\,ds\,, 
\; \tau\le t\le T\,,
\end{equation}
with $\hat{y}(\cdot)$ given by \eqref{e:evolution-map}.

%\smallskip

%\item[\bf S5.] 
%\marginpar{\bf \scriptsize eq. satisfied by $\Phi(t,\tau)$}
%\dots

\smallskip

%\item[\bf S6.]
\item[\bf S5.]  
The operators $P_0(t)$, $P_1(t,s)$, $P_2(t,s,q)$ -- as above in {\bf S3.} --
satisfy the following coupled system of equations, for every $t\in[0,T)$ and for any $x,y\in \cD(A)$:
\begin{equation} \label{e:DRE}
\begin{cases}
&\frac{d}{dt}\langle P_0(t)x,y\rangle_H +\langle P_0(t)x,Ay\rangle_H 
+ \langle Ax,P_0(t)y\rangle_H + \langle Qx,y\rangle_H  
\\[1mm] 
&\qquad\quad - \langle B^*P_0(t)x,B^*P_0(t)y\rangle_U 
+ \langle P_1(t,t)x,y\rangle_H +\langle x,P_1(t,t)y\rangle_H = 0 
\\[2mm]
&\frac{\partial}{\partial t} \langle P_1(t,s)x,y\rangle_H  + \langle P_1(t,s)x,Ay\rangle_H +
\langle K(t-s)x,P_0(t)y\rangle_H 
\\[1mm] 
&\qquad\quad
+ \langle P_2(t,s,t)x,y\rangle_H - \langle B^*P_1(t,s)x,B^*P_0(t)y\rangle_U =0
\\[2mm]
& \frac{\partial}{\partial t}\langle P_2(t,s,q)x,y\rangle_H
+\langle P_1(t,s)x,K(t-q)y\rangle_H
+\langle K(t-s)x,P_1(t,q)y\rangle_H
\\[1mm]
&\qquad\quad  -\langle B^*P_1(t,s)x,B^*P_1(t,q)y\rangle_U=0
\end{cases}
\end{equation}
with final conditions
\begin{equation*}
P_0(T)=0\,, \; P_1(T,s)=0\,, P_2(T,s,q)=0\,.
\end{equation*} 

\end{enumerate}

\end{theorem}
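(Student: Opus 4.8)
The plan is to follow the variational scheme outlined in the Introduction, treating $X_0=(\xi_0,\xi(\cdot))\in Y_\tau$ as the initial datum and exploiting the representation formula for the solution of \eqref{e:family-cauchy} derived in Section~\ref{s:prelimin}. Since that formula makes the dependence of $w$ on the control affine --- schematically $w=\mathcal{L}_\tau u+z_0$, where $\mathcal{L}_\tau\colon L^2(\tau,T;U)\to C([\tau,T],H)$ is a bounded linear operator and $z_0$ is the (control-independent) free evolution of $X_0$ --- the cost $J_\tau(\cdot,X_0)$ becomes a quadratic functional of $u$ whose leading part $\|u\|_{L^2(\tau,T;U)}^2+\langle Q\mathcal{L}_\tau u,\mathcal{L}_\tau u\rangle$ is continuous, strictly convex (because $Q\ge 0$ while the control penalization is coercive) and radially unbounded. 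Standard convex optimization then yields a unique minimizer $\hat u(\cdot,\tau;X_0)$, giving {\bf S1.}; the asserted continuity follows by feeding $\hat u$ back into the representation formula and into the first-order optimality condition derived below.

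For {\bf S2.} I would invoke Bellman's principle of optimality: restricting an optimal trajectory issued from $(\tau,X_0)$ to a later interval $[\tau_1,T]$ produces the optimal trajectory for the problem started at $\tau_1$ with the correspondingly updated augmented datum $\Phi(\tau_1,\tau)X_0$. The semigroup-like identity $\Phi(t,\tau)=\Phi(t,\tau_1)\Phi(\tau_1,\tau)$ is precisely this principle translated into the language of the augmented state, once one checks that the history component is propagated consistently by the rule \eqref{e:evolution-map}. For {\bf S3.}, linearity of $X_0\mapsto\hat u$ forces $X_0\mapsto J_\tau(\hat u,X_0)$ to be a bounded non-negative quadratic form on $Y_\tau$; polarizing it and reading off the diagonal and off-diagonal blocks against the splitting $X_0=(\xi_0,\xi(\cdot))$ produces $P_0(\tau),P_1(\tau,s),P_2(\tau,s,q)$ as in \eqref{e:optimal-cost_1}, with the self-adjointness and non-negativity of $P_0$ and $P_2$ inherited from $J_\tau\ge 0$.

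The core of the argument is the passage from the optimality system to the feedback law {\bf S4.} and thence to the Riccati system {\bf S5.} Writing the Euler--Lagrange condition $\nabla_u J_\tau(\hat u)=0$ and using the adjoint of $\mathcal{L}_\tau$, I would introduce the costate $p(\cdot)$, which solves the anti-causal equation dual to \eqref{e:family-cauchy} (the memory kernel reappears transposed, so that $p$ acquires a term $\int_t^T K(s-t)\,p(s)\,ds$) subject to $p(T)=0$; the optimality condition then reads $\hat u(t)=-B^*p(t)$. The decisive step is to establish the closure relation
\begin{equation*}
p(t)=P_0(t)\hat w(t)+\int_0^t P_1(t,s)\hat y(s)\,ds,
\end{equation*}
with exactly the operators produced in {\bf S3.}; substituting this into $\hat u=-B^*p$ gives \eqref{e:feedback}. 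To verify the closure relation and to obtain {\bf S5.}, I would differentiate the cost identity $\langle P(t)\Phi(t,\tau)X_0,\Phi(t,\tau)X_0\rangle_{Y_t}$ (equivalently, the costate representation) in $t$ along the optimal flow, insert the dynamics of $\hat w$, the propagation rule for $\hat y$, and the feedback, and then match the three homogeneous components of the resulting quadratic form --- the $H\times H$ part, the $H\times L^2$ part, and the $L^2\times L^2$ part --- to extract the three scalar identities of \eqref{e:DRE}. The coupling terms $P_1(t,t)$ in the first equation and $P_2(t,s,t)$ in the second are exactly the boundary contributions generated by Leibniz differentiation of the integrals whose upper limit is $t$, together with the on-diagonal evaluation of the kernel $K(t-s)$; the final conditions $P_i(T)=0$ record the vanishing of the cost-to-go at the terminal time.

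The main obstacle I anticipate lies in the rigor of {\bf S5.}: because $A$ is unbounded, the differentiation must be carried out weakly against test elements $x,y\in\cD(A)$ (as the statement already signals), and one has to justify that the optimal state and costate are regular enough for the Leibniz rule and for the appearance of $Ax$, $Ay$ in the duality pairings. A secondary difficulty, absent in the memoryless theory, is the consistent bookkeeping of the history variable: the forcing $\int_0^\tau K(t-s)\xi(s)\,ds$ in \eqref{e:family-cauchy} couples the present state to the entire past, so the closure relation must carry the kernel $K$ into the off-diagonal and history-history blocks in precisely the manner that reproduces the mixed terms $\langle K(t-s)x,P_0(t)y\rangle_H$ and $\langle K(t-s)x,P_1(t,q)y\rangle_H$ appearing in \eqref{e:DRE}.
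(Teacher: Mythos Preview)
Your outline for \textbf{S1.}--\textbf{S3.} matches the paper's: the representation formula makes the state affine in $u$, coercivity of $I+L_\tau^*QL_\tau$ yields the unique minimizer, Bellman's principle gives the transition property, and expanding the optimal cost in the augmented datum produces $P_0,P_1,P_2$.

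For \textbf{S4.}--\textbf{S5.} your route diverges from the paper's, and the divergence matters. The paper does \emph{not} introduce a costate equation; it works directly with $\hat u=-L_\tau^*Q\hat w$ and the transition property to obtain a ``pre-feedback'' formula involving auxiliary operators $Z_1,Z_2$ (built from $F$, $M$ and $\Psi_1,\Psi_2$). The passage from this to the genuine feedback \eqref{e:feedback} --- i.e.\ your ``closure relation'' --- is the technical heart of the paper and is accomplished by the Key Lemma (Lemma~\ref{l:key-lemma}): three operator identities of the type
\[
\int_t^T F(\sigma,t)^*QZ_1(\sigma,t)\,d\sigma=\int_t^T\bigl[Z_1(\sigma,t)^*QZ_1(\sigma,t)+\Psi_1(\sigma,t)^*\Psi_1(\sigma,t)\bigr]\,d\sigma,
\]
proved by algebraic manipulation of $\Lambda_\tau=I+L_\tau^*QL_\tau$. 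You correctly flag the closure relation as ``decisive'' but give no mechanism for proving it; in your costate language the issue is that the $P_i$ defined via the optimal cost are quadratic in the optimal evolution (schematically $Z^*QZ+\Psi^*\Psi$), whereas the operators appearing when you write $p(t)$ via $\hat u=-B^*p$ are linear in it (schematically $F^*QZ$), and showing these coincide is exactly the content of the Key Lemma. This is the step you are underestimating --- the obstacle you anticipate in \textbf{S5.} is real but secondary.

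For \textbf{S5.} the paper again takes a different path: rather than differentiating the value function along the optimal flow, it differentiates the explicit integral representations \eqref{e:riccati-ops_2} of $P_0,P_1,P_2$ directly in $\tau$, using a battery of differentiation formulas for $F,M,\Psi_i,Z_i$ on $\cD(A)$ collected in the Appendix. Your flow-differentiation approach is classical and would also work, but it presupposes the closure relation already established; the paper's direct computation avoids that circularity at the price of lengthy calculus. Either way, the nontrivial content you are missing is the Key Lemma.
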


\bigskip
The coupled system of (four) equations satisfied by the operators $P_0(t)$, $P_1(t,\cdot)$, $P_1(t,:)^*$ and $P_2(t,\cdot,:)$ -- with the one for $P_1(t,:)^*$ derived later in the paper, see \eqref{e:P_1*} -- can be shown to be equivalent to a single equation satisfied by a matrix operator
\begin{equation} \label{e:big-op}
P(t):=
\begin{pmatrix}
P_0(t) & P_1(t,\cdot)
\\
P_1(t,:)^* & P_2(t,\cdot,:)
\end{pmatrix}
\end{equation}
in the product space $H\times L^2(0,t;H)$. 
Such a unified form of the equation, that is \eqref{e:big-riccati} below,
\begin{itemize}
\item
renders more explicit its Riccati-type nature,
\item
reduces to a standard Riccati equation in the absence of memory.
\end{itemize}

% THM + Uniqueness

\begin{theorem}[Well-posedness for the Riccati equation] \label{t:big-riccati}
With reference to the optimal control problem \eqref{e:family-cauchy}-\eqref{e:family-cost}, under the Assumptions~\ref{a:ipo_0}, let $P_0(t)$, $P_1(t,\cdot)$ and $P_2(t,\cdot,:)$ as from the statement S3. of Theorem~\ref{t:main}.
Then, the (matrix) operator $P(t)$ defined by \eqref{e:big-op} is the unique solution of the following quadratic equation with unbounded coefficients:
\begin{equation} \label{e:big-riccati}
\frac{d}{dt} P(t)+P(t)(\cA+\cK_1(t)+\cD_{1,t})+(\cA^*+\cK_2(t)+\cD_{2,t})P(t)
- P(t)\cB\cB^*P(t)+\cQ=0\,,
\end{equation}
where we set
\begin{equation*}
\begin{split}
&\cA:=
\begin{pmatrix} 
A & 0
\\
0 & 0
\end{pmatrix},
\qquad
\cB:=
\begin{pmatrix} 
B & 0
\\
0 & 0
\end{pmatrix},
\qquad
\cQ:=
\begin{pmatrix} 
Q & 0
\\
0 & 0
\end{pmatrix},
\\[1mm]
& \cK_1(t):=
\begin{pmatrix} 
0 & K(t-\cdot)
\\
0 & 0
\end{pmatrix},
\qquad
\cK_2(t):=
\begin{pmatrix} 
0 & 0
\\
K(t-:) & 0
\end{pmatrix}, 
\\[1mm]
& \cD_{1,t}:=
\begin{pmatrix} 
0 & 0
\\
\delta_t(:) & 0
\end{pmatrix}, 
\qquad 
\cD_{2,t}:=
\begin{pmatrix} 
0 & \delta_t(\cdot)
\\
0 & 0
\end{pmatrix},
\end{split}
\end{equation*}
and $\delta_t(\cdot)$ denotes the Dirac delta distribution: to wit,
$\delta_t f(\cdot)=f(t)$. 
  
\end{theorem}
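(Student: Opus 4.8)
The plan is to prove the theorem in two movements: first to show that equation \eqref{e:big-riccati}, suitably interpreted, is merely a compact rewriting of the scalar-coupled system \eqref{e:DRE} together with the adjoint equation \eqref{e:P_1*}, and then to deduce existence from Theorem~\ref{t:main} and to prove uniqueness. I would begin by fixing the functional-analytic meaning of \eqref{e:big-riccati} on the product space $H\times L^2(0,t;H)$. Setting $\cM(t):=\cA+\cK_1(t)+\cD_{1,t}$, one reads off from the block definitions that its formal adjoint is $\cM(t)^*=\cA^*+\cK_2(t)+\cD_{2,t}$: indeed $\cK_1(t)^*=\cK_2(t)$ since $K$ is real-valued, while $\cD_{1,t}^*=\cD_{2,t}$ is the adjoint pairing of the two Dirac blocks (the adjoint of evaluation $\varphi\mapsto\varphi(t)$ is the concentration $x\mapsto\delta_t\,x$). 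Hence \eqref{e:big-riccati} takes the standard Riccati shape $\frac{d}{dt}P+P\cM+\cM^*P-P\cB\cB^*P+\cQ=0$, but with $\cM$ carrying the distributional blocks $\cD_{1,t},\cD_{2,t}$; the identity must therefore be understood weakly, by pairing against pairs $(x,\varphi)$ with $x\in\cD(A)$ and $\varphi$ regular enough that the trace $\varphi(t)$ is meaningful, so that the Dirac blocks act.

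Next I would insert the block form \eqref{e:big-op}, record that $P(t)=P(t)^*$ (statement S3 gives $P_0,P_2$ self-adjoint, and the two off-diagonal blocks are mutually adjoint by their very definition), and expand the four entries of $P\cM+\cM^*P-P\cB\cB^*P+\cQ$. This is routine block algebra; the only genuine points are that the Dirac blocks contract $P_1(t,\cdot)$ and $P_2$ to their diagonal values, producing precisely the terms $P_1(t,t)$ in the $(1,1)$ block and $P_2(t,s,t)$ (respectively $P_2(t,t,q)$) in the off-diagonal and $(2,2)$ blocks, while the convolution blocks $\cK_1,\cK_2$ reproduce the kernel terms $K(t-s)P_0$, $K(t-\cdot)P_1$, and so on. Matching the $(1,1)$, $(1,2)$ and $(2,2)$ blocks yields, in order, the three equations of \eqref{e:DRE}, and the $(2,1)$ block yields \eqref{e:P_1*}; since $P=P^*$ and $\cM^*=(\cM)^*$, the $(2,1)$ identity is automatically the adjoint of the $(1,2)$ identity, which explains why the fourth ($P_1^*$) equation is not independent. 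The final condition $P(T)=0$ is simply the assembly of $P_0(T)=P_1(T,\cdot)=P_2(T,\cdot,:)=0$.

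Existence is then immediate: by Theorem~\ref{t:main} the operators $P_0,P_1,P_2$ satisfy \eqref{e:DRE} and \eqref{e:P_1*} with zero terminal data, so, reading the block expansion backwards, the assembled $P(t)$ of \eqref{e:big-op} solves \eqref{e:big-riccati}. For uniqueness I would exploit the variational content of S3, namely that for every $t$ the quadratic form $X_0\mapsto\langle P(t)X_0,X_0\rangle_{Y_t}$ equals the optimal cost $J_t(\hat u,X_0)=\min_u J_t(u,X_0)$, an intrinsic quantity of the problem. Concretely, given any weak solution $\tilde P$ of \eqref{e:big-riccati} with $\tilde P(T)=0$, I would carry out a verification (completion-of-squares) argument: differentiate $r\mapsto\langle\tilde P(r)X(r),X(r)\rangle_{Y_r}$ along the augmented trajectory $X(r)$ associated with an arbitrary admissible control, where the distributional blocks $\cD_{1,t},\cD_{2,t}$ account precisely for the transport and shift of the history component $\hat y$ in \eqref{e:evolution-map}; integrating over $[t,T]$, using $\tilde P(T)=0$, and completing the square in the control then gives $\langle\tilde P(t)X_0,X_0\rangle=J_t(\hat u,X_0)$ for all $X_0$. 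Since a bounded self-adjoint operator is determined by its quadratic form, $\tilde P(t)=P(t)$. Equivalently, the equivalence established above lets one transfer uniqueness for \eqref{e:big-riccati} to uniqueness for the scalar system \eqref{e:DRE}--\eqref{e:P_1*}.

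The hard part will not be the block bookkeeping but making the distributional coefficients $\cD_{1,t},\cD_{2,t}$ rigorous: one must pin down the exact domain (pairs whose first component lies in $\cD(A)$ and whose history component admits a trace at $t$) on which the products $P\cM$ and $\cM^*P$ are defined, and verify that the regularity supplied by S1, that is the continuity of $\hat u,\hat w$ up to $t$, is exactly what legitimizes the evaluations $\varphi(t)$ demanded by the Dirac blocks. This same regularity is what must be controlled to justify the differentiation and the integration by parts in the completion-of-squares step underlying the uniqueness assertion.
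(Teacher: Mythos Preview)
Your existence argument is essentially the paper's: the block expansion of \eqref{e:big-riccati} reproduces the system \eqref{e:DRE} together with \eqref{e:P_1*}, and Theorem~\ref{t:main} then supplies a solution. One point you underplay is what the paper calls a ``subtle analytical gap'': the equations in \eqref{e:DRE} are stated as identities in $H$ for \emph{fixed} $s,q\in[0,t]$ and $x,y\in\cD(A)$, whereas the blocks of \eqref{e:big-riccati} are identities in $L^2(0,t;H)$, i.e.\ integrated in $s,q$ against test functions $f,g$. The paper isolates this in Proposition~\ref{p:DRE_equiv}, which shows the two formulations are equivalent (passing back and forth via step functions and density of $C^0_0((0,t),\cD(A))$). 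Your remark that one must ``pin down the exact domain'' on which $P\cM$ and $\cM^*P$ make sense is in the right spirit, but the concrete content is this $H$-versus-$L^2$ translation, not just the trace issue for the Dirac blocks.

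Your uniqueness route is genuinely different from the paper's. You propose a verification/completion-of-squares argument: differentiate $r\mapsto\langle\tilde P(r)X(r),X(r)\rangle_{Y_r}$ along an augmented trajectory and show that any solution reproduces the optimal cost. The paper instead takes the difference $V=P-U$, writes the \emph{mild} form of the linear equation it satisfies, and closes by Gronwall, after setting up a Banach space $Z=\{U(\cdot):U(\tau)\in\cL(Y_\tau)\}$ and an isometric identification of $Y_\tau$ with a subspace of $Y_t$ for $\tau<t$ (this is how the paper copes with the $t$-dependence of the state space); a lower-semicontinuity lemma for $t\mapsto\|V(t)\|_{\cL(Y_\tau)}$ secures measurability of the integrand. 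Your approach is classical for Riccati equations on a fixed Hilbert space, and the boundary terms from the growing upper limits in the $ds,dq$-integrals do indeed match the $\cD_{1,t},\cD_{2,t}$ contributions, as you say. But here it carries extra burdens: you must (i) argue that any weak solution $\tilde P$ is self-adjoint (needed for ``quadratic form determines the operator''), (ii) justify differentiating $\langle\tilde P(r)X(r),X(r)\rangle_{Y_r}$ when $X(r)\in Y_r$ with $Y_r$ varying, which forces you to work with initial data in $\cD(A)\times L^2(0,\tau;\cD(A))$ and to control the regularity of a \emph{generic} solution $\tilde P$, not just the constructed $P$. The paper's Gronwall route sidesteps (ii) entirely by freezing the ambient space at $Y_\tau$ and never differentiating along trajectories; it buys a shorter and more robust argument at the price of the small semicontinuity lemma. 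Your route, if carried through, would yield the familiar ``fundamental identity'' for the problem, which has independent interest, but it is the harder of the two to make rigorous in this time-varying-space setting.
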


%\smallskip

% NOTATION

\subsection{Notation}
The concise notation $\partial_\tau$ ($\partial_t$, etc.) in place of 
$\frac{\partial}{\partial\tau}$ ($\frac{\partial}{\partial t}$, etc., respectively), 
will be adopted throughout.

% OUTLINE

\subsection{An overview of the paper}
To a large extent, the proof of Theorem~\ref{t:main} retraces the principal steps of the proofs carried out in the study of the LQ problem for memoryless control systems; see \cite{las-trig-redbooks}.
However, the solution formula corresponding to the integro-differential initial value problem naturally accounts for the more involved computations {\em at any step} of the line of investigation.
Also, a difference -- and technically challenging element -- in comparison with the memoryless case stands in the fact that the optimal cost operator is not readily identified in a first formula which relates the optimal control to the optimal state, as it follows from the optimality condition.

Furthermore, recasting the system of quadratic equations satisfied by the optimal cost operators
as a single Riccati-type equation in the state space as well as proving uniqueness demand that nontrivial analytical matters are addressed and overcome.

%\smallskip 
In the following section, i.e. in Section \ref{s:prelimin}, we derive an explicit representation for the solutions to the integro-differential problem in terms
of the initial state $X_0$ (which actually comprises the state at an initial time
and the memory up to it) and the control function $u(\cdot)$.
The said representation, achieved by solving a certain Volterra equation of the second kind, generalizes the well-known {\em input-to-state formula} for (memoryless) linear control systems.

In Section~\ref{s:transition} we prove the first two statements (namely, S1. and S2.) of Theorem~\ref{t:main}.
We not only infer existence of the unique optimal pair, but also pinpoint the transition properties fulfilled by both the optimal state and the optimal control.

In Section~\ref{s:feedback} we deal with the statements S3. and S4. of Theorem~\ref{t:main}.
We find readily that the representation of the optimal cost as a quadratic form involves three operators $P_i$, $i=0,1,2$, rather than just one.
Attaining certain alternative expressions for the said operators -- as it is pursued in 
Lemma~\ref{l:key-lemma} -- is a critical (and nontrivial) step in our analysis, since it
enables us to establish a sought {\em closed-loop} form of the optimal control;
see Proposition~\ref{p:closedloop}.

Section~\ref{s:riccati-eqns} is entirely devoted to the proof of statement S5.,
namely, to derive the coupled system of three differential equations satisfied -- uniquely -- by
the operators $P_i$, $i=0,1,2$. 

Section~\ref{s:uniqueness} focuses on the proof of Theorem~\ref{t:big-riccati}.
Its existence and uniqueness parts rely upon two instrumental results (Proposition~\ref{p:DRE_equiv} and Lemma~\ref{l:lsc}) which resolve respective technical points.

In Appendix~\ref{a:appendix} we gather several analytical results which are primarily utilized in the proofs of Lemma~\ref{l:key-lemma} and of the fundamental assertion S5.~of Theorem \ref{t:main},
as well as for the question of uniqueness within Theorem~\ref{t:big-riccati}.

% SEZIONE Preliminaries

\section{Preliminaries. A representation formula for the solutions} \label{s:prelimin}
The starting point for a study of the optimal control problem with quadratic functionals
for (memoryless) linear differential systems of general form $y'=Ay+Bu$ is a representation formula for the {\em mild} solutions corresponding to an initial state $y_0:=y(\tau)$ and a control function $u(\cdot)$, that is
\begin{equation*}
y(t)= e^{(t-\tau)A} y_0 + \int_\tau^t e^{(t-s)A} B u(s)\,ds\,, \quad t\in [\tau,T)\,; 
\end{equation*}
see \cite{las-trig-redbooks}.
Then, as it is well known, the analyses of boundary control systems governed by PDE
split apart, depending on the distinct regularity properties of the (so called) {\em input-to-state map}
\begin{equation*}
L\colon L^2(\tau,T;U) \ni u(\cdot) \longrightarrow (Lu(\cdot))(t):=\int_\tau^t e^{(t-s)A} B u(s)\,ds 
\end{equation*}
that occurs in the said formula, as well as of its adjoint, in accordance with a parabolic
or hyperbolic character of the free dynamics.

In the present context, the following definition appears natural.

% DEF. MILD SOLUTIONS

\begin{definition} \label{d:mild-sln}
We say that a function $w(t)$ is a mild solution to the Cauchy problem \eqref{e:family-cauchy} corresponding to an initial datum $X_0$ -- that subsumes the state $\xi_0$ at time $\tau\in (0,T)$
and the past history $\xi(\cdot)$ on $[0,\tau)$ -- and a control function $u(\cdot)\in L^2(\tau,T;U)$, if it belongs to $L^2(\tau,T;H)$ and satisfies the integral equation 
\begin{equation}\label{e:integral-eq_0}
\begin{split}
& w(t,\tau,X_0)\equiv w(t)=e^{(t-\tau)A}\xi_0 
+ \int_\tau^t e^{(t-s)A} \int_\tau^s K(s-\sigma) w(\sigma)\,d\sigma\,ds
\\[1mm]
& \qquad 
+ \int_\tau^t e^{(t-s)A} \int_0^\tau K(s-\sigma) \xi(\sigma)\,d\sigma\,ds
+ \int_\tau^t e^{(t-s)A}Bu(s)\,ds
\end{split}
\end{equation}
a.e. on $[\tau,T]$.
\end{definition}

\begin{remark}
\begin{rm}
We note here that if a mild solution $w$ exists according to the above definition (and
under the Assumptions \ref{a:ipo_0}), then $w\in C([\tau,T],H)$. 
\end{rm}
\end{remark}

\medskip
The theory of linear Volterra equations allows to achieve a representation formula for the mild solutions to the family of Cauchy problems \eqref{e:family-cauchy} (depending on the parameter $\tau$).
The proof is not difficult, yet it is given for completeness and the readers' convenience.

% REPRESENTATION FORMULA

\begin{proposition} \label{p:representation}
For any $X_0$ as in \eqref{e:X_0} and $u\in L^2(\tau,T;U)$, the (controlled) integro-differential problem \eqref{e:family-cauchy} admits a unique mild solution $w=w(t)$,
given by 
\begin{equation} \label{e:represent}
w(t)=w(t;\tau,X_0) 
= F(t,\tau)\xi_0 + \int_0^\tau M(t,\sigma,\tau) \xi(\sigma)\,d\sigma
+ \int_\tau^t F(t,s)Bu(s)\,ds, \ t\in [\tau,T]\,,
\end{equation}
where
\begin{subequations} \label{e:vari-ops}
\begin{align}
F(t,\tau) &:= e^{(t-\tau)A}-\int_\tau^t R(t-s) e^{(s-\tau)A}\,ds\,, & \tau\le t,
\label{e:F}
\\
M(t,\sigma,\tau)&:= G(t,\sigma,\tau)- \int_\tau^t R(t-s) G(s,\sigma,\tau)\, ds\,, & \sigma\le \tau\le t\,,
\label{e:M}
\\
G(t,\sigma,\tau)&:= \mu(t-\sigma)-e^{(t-\tau)A}\mu(\tau-\sigma)\,, & \sigma\le \tau\le t,
\label{e:G}
\\
\mu(t)& := \int_0^t e^{(t-s)A} K(s)\,ds\,,
\label{e:mu}
\end{align}
\end{subequations}
while $R(t)$ is the unique solution to the Volterra equation (of the second kind)
\begin{equation} \label{e:resolvent-op}
R(t) - \int_0^t \mu(t-s) R(s) \,ds = -\mu(t)\,, \quad t>0,
\end{equation}
explicitly given by
\begin{equation} \label{e:iterated}
R(t) = -\mu(t) - \int_0^t \mu(t-s)\mu(s)\,ds - \int_0^t\mu(t-s)\int_0^\sigma\mu(\sigma-s)\mu(s)\,ds\,d\sigma - \dots\,.
\end{equation}

\end{proposition}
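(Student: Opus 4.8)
The plan is to reduce the integral equation \eqref{e:integral-eq_0} to a scalar-type convolution Volterra equation of the second kind and then to solve it explicitly by means of its resolvent kernel. First I would isolate the \emph{self-referential} memory term $\int_\tau^t e^{(t-s)A}\int_\tau^s K(s-\sigma)w(\sigma)\,d\sigma\,ds$ and exchange the order of integration in it. Since for $\tau\le\sigma\le s\le t$ the substitution $r=s-\sigma$ gives $\int_\sigma^t e^{(t-s)A}K(s-\sigma)\,ds=\int_0^{t-\sigma}e^{((t-\sigma)-r)A}K(r)\,dr=\mu(t-\sigma)$, with $\mu$ as in \eqref{e:mu}, this term collapses to $\int_\tau^t\mu(t-\sigma)w(\sigma)\,d\sigma$. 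Collecting the three remaining terms (the free term $e^{(t-\tau)A}\xi_0$, the history term, and the control term) into a single datum $g(t)$ then recasts \eqref{e:integral-eq_0} as
\begin{equation*}
w(t)=g(t)+\int_\tau^t\mu(t-\sigma)w(\sigma)\,d\sigma\,,\qquad t\in[\tau,T]\,.
\end{equation*}

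Next I would solve this equation through the resolvent $R$ of the kernel $\mu$. I would introduce $R$ as the unique solution of \eqref{e:resolvent-op}, constructed as the Neumann series \eqref{e:iterated} of iterated convolutions of $\mu$; its convergence in $\cL(H)$, uniformly on $[0,T]$, follows from $\|e^{tA}\|_{\cL(H)}\le Ce^{\omega t}$ together with $K\in L^2(0,T;\mathbb{R})$, which render $\mu$ bounded and make the iterated kernels admit factorially decaying bounds of the form $m^n t^{n-1}/(n-1)!$. Writing $\ast$ for convolution and using that both $\mu$ and $R$ are genuine convolution kernels (functions of the difference of their arguments), so that the defining identity $R-\mu\ast R=-\mu$ is translation invariant and hence applies on $[\tau,t]$ as well, a direct substitution shows that
\begin{equation*}
w(t)=g(t)-\int_\tau^t R(t-s)g(s)\,ds
\end{equation*}
solves the convolution equation, thereby furnishing existence; uniqueness is immediate, since the difference $v$ of two mild solutions satisfies $v=\mu\ast v$ and Gronwall's inequality forces $v\equiv0$.

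It then remains to identify the operators $F$, $M$, $G$. Substituting the explicit form of $g$ into $w=g-R\ast g$ and performing a Fubini exchange in each of the three summands reproduces exactly \eqref{e:vari-ops}: the coefficient of $\xi_0$ becomes $e^{(t-\tau)A}-\int_\tau^t R(t-s)e^{(s-\tau)A}\,ds=F(t,\tau)$; the control term, after switching the order of integration in $\int_\tau^t R(t-r)\int_\tau^r e^{(r-s)A}Bu(s)\,ds\,dr$, yields $\int_\tau^t F(t,s)Bu(s)\,ds$; and the history term produces $\int_0^\tau M(t,\sigma,\tau)\xi(\sigma)\,d\sigma$, where the inner kernel $\int_\tau^t e^{(t-s)A}K(s-\sigma)\,ds$ is computed (again by the substitution $r=s-\sigma$ and by factoring $e^{(t-\tau)A}$ out of the part of the integral over $[0,\tau-\sigma]$) to be $\mu(t-\sigma)-e^{(t-\tau)A}\mu(\tau-\sigma)=G(t,\sigma,\tau)$. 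I expect the only genuinely delicate points to be bookkeeping ones: justifying the Fubini exchanges and the convergence of the Neumann series, both of which rest on the hypothesis $K\in L^2$ and the exponential bound on the semigroup, and keeping track of the two distinct roles of the kernel $K$ --- acting over $[\tau,t]$ in the self-referential term and over $[0,\tau]$ in the history term --- which is precisely what separates the factor $G$ from the plain convolution kernel $\mu$.
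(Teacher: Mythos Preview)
Your proposal is correct and follows essentially the same route as the paper: both reduce \eqref{e:integral-eq_0} to a convolution Volterra equation $w=\cF+\mu\ast w$ by a Fubini exchange, solve it via the resolvent kernel $R$ as $w=\cF-R\ast\cF$, and then read off $F$, $G$, $M$ by substituting the explicit form of the forcing term. Your write-up is slightly more explicit than the paper's about the convergence of the Neumann series and about uniqueness (via Gronwall), but these are minor embellishments of the same argument.
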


% Dim.

\begin{proof}
With \eqref{e:integral-eq_0} as a starting point, let $w=w(t)$ an $L^2$-in time solution.
We rewrite the second summand in its right hand side by exchanging the order of integration:
\begin{equation}\label{e:third}
\begin{split} 
&\int_\tau^t e^{(t-s)A} \int_\tau^s K(s-\sigma) w(\sigma)\,d\sigma\,ds
= \int_\tau^t \Big[\int_\sigma^t e^{(t-s)A} K(s-\sigma)\,ds\Big]\, w(\sigma)\,d\sigma
\\
& \qquad =\int_\tau^t \Big[\int_0^{t-\sigma} e^{(t-\sigma-\lambda)A} K(\lambda) \,d\lambda\Big]\,
w(\sigma) \,d\sigma 
=  \int_\tau^t \mu(t-\sigma) w(\sigma)\,d\sigma\,,
\end{split}
\end{equation}
having set $\mu(\cdot)$ as in \eqref{e:mu}.
Analogously, the third summand in the right hand side of \eqref{e:integral-eq_0} becomes
\begin{equation}\label{e:fourth}
\begin{split} 
&\int_\tau^t e^{(t-s)A} \int_0^\tau K(s-\sigma) \xi(\sigma)\,d\sigma\,ds
= \int_0^\tau \Big[\int_\tau^t e^{(t-s)A} K(s-\sigma) \xi(\sigma)\,ds\Big]\,d\sigma
\\
& \quad =  \int_0^\tau \Big[\int_{\tau-\sigma}^{t-\sigma} e^{(t-\sigma-\lambda)A} K(\lambda)\,d\lambda\Big]\, \xi(\sigma)d\sigma
\\ 
& \quad 
=  \int_0^\tau \Big[\mu(t-\sigma)-e^{(t-\tau)A}\mu(\tau-\sigma)\Big] \xi(\sigma)\,d\sigma\,.
\end{split}
\end{equation}
Returning to \eqref{e:integral-eq_0}, in the light of \eqref{e:third} and \eqref{e:fourth}, we find
\begin{equation}\label{e:integral-eq_1}
\begin{split}
& w(t,\tau,X_0)\equiv w(t)=e^{(t-\tau)A}\xi_0 
+ \int_\tau^t \mu(t-\sigma) w(\sigma)\,d\sigma
\\[1mm]
& \qquad + \int_0^\tau \Big[\mu(t-\sigma)-e^{(t-\tau)A}\mu(\tau-\sigma)\Big] \xi(\sigma)\,d\sigma
+ \int_\tau^t e^{(t-s)A}Bu(s)\,ds\,,
\end{split}
\end{equation}
which is a simple Volterra equation of the form 
\begin{equation}\label{e:volterra}
w(t)-\int_\tau^t \mu(t-\sigma) w(\sigma)\,d\sigma=\cF(t)\,,
\end{equation}
with $\cF(t)$ depending on the initial datum, the past history and the control, 
specifically given by
\begin{equation*}
\cF(t):=e^{(t-\tau)A}\xi_0 
+ \int_0^\tau \Big[\mu(t-\sigma)-e^{(t-\tau)A}\mu(\tau-\sigma)\Big] \xi(\sigma)\,d\sigma
+ \int_\tau^t e^{(t-s)A}Bu(s)\,ds\,.
\end{equation*}
Noticing the presence of the operator $G(t,\sigma,\tau)$ defined by \eqref{e:G},
$\cF(t)$ is rewritten more neatly as follows:
\begin{equation}\label{e:calF}
\cF(t):=e^{(t-\tau)A}\xi_0  + \int_0^\tau G(t,\sigma,r) \xi(\sigma)\,d\sigma
+ \int_\tau^t e^{(t-s)A}Bu(s)\,ds\,.
\end{equation}
Thus, it is well known that the solution to the Volterra equation \eqref{e:volterra} is
given by 
\begin{equation}\label{e:soln-volterra}
w(t) = \cF(t) -\int_\tau^t R(t-s) \cF(s)\, ds\,,
\end{equation}
where $R(t)$ is the resolvent kernel of \eqref{e:volterra}, that is the unique solution to the integral equation \eqref{e:resolvent-op}; see e.g. \cite[Chapter~5]{corduneanu}.
With $\cF(t)$ given by \eqref{e:calF}, then \eqref{e:soln-volterra} % yields
reads as
\begin{equation*}
\begin{split}
w(t) &= e^{(t-\tau)A}\xi_0 
+ \int_0^\tau G(t,\sigma,\tau) \xi(\sigma)\,d\sigma
+ \int_\tau^t e^{(t-s)A}Bu(s)\,ds
\\[1mm]
& \qquad
- \int_\tau^t R(t-s) e^{(s-\tau)A}\xi_0\,ds 
- \int_\tau^t R(t-s) \int_0^\tau G(s,\sigma,\tau) \xi(\sigma)\,d\sigma\,ds
\\[1mm]
& \qquad
- \int_\tau^t R(t-s) \int_\tau^s e^{(s-\sigma)A}Bu(\sigma)\,d\sigma\,ds\,.
\end{split}
\end{equation*}
Thus, if we set $F(t,\tau)$ and $M(t,\sigma,\tau)$ as in \eqref{e:F} and \eqref{e:M},
respectively, we finally attain -- for the solution to the integral equation \eqref{e:integral-eq_1},
which is equivalent to the original one \eqref{e:integral-eq_0} -- 
the representation formula \eqref{e:represent}.

A simple verification confirms that the function in \eqref{e:represent} is indeed the unique mild solution of \eqref{e:integral-eq_0}, which concludes the proof.
\end{proof}

% S1. and S2.: TRANSITION PROPERTIES, FIRST ORDER CONDITION, OPTIMAL PAIR

\section{The unique optimal pair. Transition properties, regularity, statements S1.~and
S2.~of Theorem~\ref{t:main}}
\label{s:transition}
We begin this section by proving that, given $\tau\in (0,T)$, every solution $w(t;\tau,X_0)$ to the integro-differential problem \eqref{e:family-cauchy} satisfies a transition property, just like in the memoryless case.

Next, we show that the cost functional \eqref{e:family-cost} is a quadratic form in the space $Y_\tau=H\times L^2(0,\tau;H)$.
This brings about the existence of a unique optimal control $\hat{u}(t,\tau,X_0)$, along with a first (pointwise in time) representation of 
$\hat{u}(\cdot)$ in dependence on the optimal state $\hat{w}(\cdot)$.
The latter is a straightforward outcome of the optimality condition.

A further analysis allows then to prove that the optimal control inherits a transition property from the optimal state, as well.

% TRANSITION PROPERTY, I

\subsection{Transition property for the state variable}
Let $w(t):=w(t;\tau,X_0)$, $t\in [\tau,T]$, be the mild solution to the integro-differential equation \eqref{e:family-cauchy} corresponding to an initial datum $X_0$ and a control function $u(\cdot)$.
For $\tau_1\in (\tau,t)$, define
\begin{equation} \label{e:X_1}
X_1=\begin{pmatrix}y_0\\[1mm] y(\cdot)\end{pmatrix}\,, 
\quad \text{\small with} \quad
y_0=w(\tau_1^+)\,, \quad 
y(\cdot) =\begin{cases}\xi(\cdot) & \text{in $[0,\tau]$}
\\
w(\cdot,\tau,X_0) & \text{in $(\tau,\tau_1]$.}
\end{cases}
\end{equation}

Then, the following result holds true.

% TRANSITION Property for the optimal state

\begin{proposition}
The following transition property
\begin{equation*}
w(t;\tau,X_0)=w(t;\tau_1,X_1) \qquad \forall t\in (\tau_1,T)
\end{equation*}
is valid.
\end{proposition}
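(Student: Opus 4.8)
The plan is to reduce the identity to the uniqueness of mild solutions granted by Proposition~\ref{p:representation}. Indeed, $w(t;\tau_1,X_1)$ is by definition the unique function in $L^2(\tau_1,T;H)$ satisfying the integral equation \eqref{e:integral-eq_0} written with initial time $\tau_1$, initial state $y_0$, and history $y(\cdot)$ from \eqref{e:X_1}. Hence it suffices to verify that the restriction of $w(\cdot):=w(\cdot;\tau,X_0)$ to $[\tau_1,T]$ solves that same equation; the asserted equality then follows at once.

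First I would record the value of $w$ at the new initial time. Evaluating the integral equation \eqref{e:integral-eq_0} (for $w$, at initial time $\tau$) at $t=\tau_1$ gives an expression for $w(\tau_1^+)=y_0$; applying $e^{(t-\tau_1)A}$ and using the semigroup law $e^{(t-\tau_1)A}e^{(\tau_1-s)A}=e^{(t-s)A}$ recasts $e^{(t-\tau_1)A}y_0$ as a sum of three integrals over $[\tau,\tau_1]$ (a convolution term in $w$, a history term in $\xi$, and a control term). Substituting this into the candidate equation at $\tau_1$, I am left to check that the resulting right-hand side coincides with the right-hand side of the equation satisfied by $w$ at time $\tau$.

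The core of the verification consists of three domain splittings that must conspire exactly. The control contributions merge through $\int_\tau^{\tau_1}+\int_{\tau_1}^t=\int_\tau^t$. For the new history term, I invoke the definition of $y(\cdot)$ in \eqref{e:X_1} to split $\int_0^{\tau_1}=\int_0^\tau+\int_\tau^{\tau_1}$, the first piece carrying $\xi$ and reconstituting $\int_\tau^t e^{(t-s)A}\int_0^\tau K(s-\sigma)\xi(\sigma)\,d\sigma\,ds$, the second piece carrying $w$. This second piece is precisely where the single genuinely load-bearing point occurs: the intermediate solution $w$ on $(\tau,\tau_1]$ appears both as the tail of the new convolution integral $\int_{\tau_1}^s K(s-\sigma)w\,d\sigma$ and as part of the new history, and the two recombine via
\[
\int_{\tau_1}^s K(s-\sigma)w(\sigma)\,d\sigma+\int_\tau^{\tau_1} K(s-\sigma)w(\sigma)\,d\sigma=\int_\tau^s K(s-\sigma)w(\sigma)\,d\sigma,
\]
valid for $s>\tau_1$. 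Together with the convolution contribution already produced from $e^{(t-\tau_1)A}y_0$, this rebuilds the full term $\int_\tau^t e^{(t-s)A}\int_\tau^s K(s-\sigma)w(\sigma)\,d\sigma\,ds$.

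Once all three groups of terms are reassembled, the right-hand side is exactly that of \eqref{e:integral-eq_0} for $w$ at initial time $\tau$, which holds by hypothesis; thus $w$ solves the integral equation at $\tau_1$ with datum $X_1$, and uniqueness concludes the proof. I expect the main obstacle to be purely the bookkeeping of the nested splittings (of the control over $[\tau,t]$, of the history over $[0,\tau_1]$, and of the convolution variable over $[\tau,s]$) and confirming that they interlock, rather than any analytical subtlety; the Assumptions~\ref{a:ipo_0} guarantee that every integral in sight is well defined and that Fubini-type exchanges are legitimate throughout.
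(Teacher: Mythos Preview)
Your argument is correct. Both proofs rest on uniqueness of mild solutions, but you take a slightly different route than the paper. The paper forms the difference $z:=w(\cdot;\tau_1,X_1)-w(\cdot;\tau,X_0)$, checks that $z$ is a mild solution of the homogeneous problem $z'=Az+\int_{\tau_1}^t K(t-s)z(s)\,ds$ with $z(\tau_1^+)=0$, and then applies a direct Gronwall estimate to conclude $z\equiv 0$; in effect it re-derives uniqueness for that homogeneous equation. You instead verify directly, via the three domain splittings you describe, that $w(\cdot;\tau,X_0)|_{[\tau_1,T]}$ satisfies the integral equation \eqref{e:integral-eq_0} at initial time $\tau_1$ with datum $X_1$, and then invoke the uniqueness already packaged in Proposition~\ref{p:representation}. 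Your approach is marginally more economical since it reuses an existing result rather than reproving it via Gronwall; the paper's approach is perhaps more transparent in isolating the homogeneous equation satisfied by the discrepancy. The bookkeeping you outline (in particular the recombination $\int_{\tau_1}^s+\int_\tau^{\tau_1}=\int_\tau^s$ for the convolution in $w$, together with the analogous merges for the control and history terms) is exactly what is needed, and it does go through as you claim.
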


\begin{proof}
The function $w_1(t):=w(t;\tau_1,X_1)$ solves the initial value problem 
\begin{equation} \label{e:eq-for-w_1}
\begin{cases}
w_1'(t)=Aw_1(t)+\displaystyle\int_{\tau_1}^t K(t-s) w_1(s)\,ds
+\displaystyle\int_0^{\tau_1} K(t-s) y(s)\,ds+Bu(t)
\\[3mm]
\qquad \;\; = Aw_1(t)+ Bu(t) +\displaystyle\int_{\tau_1}^t K(t-s) w_1(s)\,ds
+\displaystyle\int_0^{\tau} K(t-s) \xi(s)\,ds
\\[3mm]
\myspace +\displaystyle\int_{\tau}^{\tau_1} K(t-s) w(s)\,ds  
\\[3mm]
w_1(\tau_1^+)=y_0=w(\tau_1^+) 
\end{cases}
\end{equation}
(in fact its mild form).
Thus the function $z(t)=w_1(t)-w(t)$ is a mild solution to 
\begin{equation*}
\begin{cases}
z'(t)=Az(t)+\displaystyle \int_{\tau_1}^t K(t-s) w_1(s)\,ds -\int_{\tau}^t K(t-s) w(s)\,ds +\int_{\tau}^{\tau_1} K(t-s) w(s)\,ds
\\[3mm]
\qquad =Az(t)+\displaystyle\int_{\tau_1}^t K(t-s) z(s)\,ds  
\\[3mm]
z(\tau_1^+)=0\,;
\end{cases}
\end{equation*}
then   
\begin{equation*}
z(t) = \int_{\tau_1}^t e^{(t-\sigma)A} \int_{\tau_1}^\sigma K(\sigma-s) z(s)\, ds\,d\sigma
= \int_{\tau_1}^t \Big[\int_{s}^t e^{(t-\sigma)A}  K(\sigma-s) \,d\sigma\Big]\,z(s)\,ds\,.
\end{equation*}

Therefore,
\begin{equation*}
\|z(t)\|_H\le C\,e^{\omega T}\|K(\cdot)\|_{L^1(0,T;\mathbb{R})} \int_{\tau_1}^t \|z(s)\|_H\,ds\,, \quad t\in [\tau_1,T]\,,
\end{equation*}
which implies $\|z(t)\|_H\equiv 0$ on $[\tau_1,T]$ by the Gronwall Lemma.
The conclusion $w_1\equiv w$ on $[\tau_1,T]$ follows.
 
\end{proof}

% THE FUNCTIONAL IS A QUADRATIC FORM

\subsection{The optimal pair. Proof of the statement S1.} %. Transition property}
Recall the representation formula \eqref{e:represent} for the solution to the Cauchy problem \eqref{e:family-cauchy}.
Just like in the study of memoryless control systems, it is useful to introduce the operator 
$L_\tau\colon L^2(\tau,T;U) \longrightarrow L^2(\tau,T;H)$ defined as follows,
\begin{equation} \label{e:L_tau}
[L_\tau u(\cdot)](t):= \int_\tau^t F(t,\sigma) B u(\sigma)\,d\sigma\,, \qquad t\in [\tau,T]\,,
\end{equation}
along with its adjoint $L_\tau^*\colon L^2(\tau,T;H)\longrightarrow L^2(\tau,T;U)$
that is deduced readily:
\begin{equation}\label{e:L_tau-star}
[L_\tau^*g(\cdot)](\sigma):= \int_\sigma^T B^*F(t,\sigma)^* g(t)\,dt\,, \qquad \sigma\in [\tau,T]\,.
\end{equation}
We will use the abbreviated notations $[L_\tau u](t)$ and even the neat $L_\tau u(t)$, in place of
$[L_\tau u(\cdot)](t)$, etc.
Then, \eqref{e:represent} reads as
\begin{equation} \label{e:formula_1}
w(t,\tau,X_0)\equiv w(t) = F(t,\tau)\xi_0 + \int_0^\tau M(t,\sigma,\tau) \xi(\sigma)\,d\sigma
+ L_\tau u(t)\,.
\end{equation}
By inserting the expression \eqref{e:formula_1} of $w(t)$ in the cost functional \eqref{e:family-cost},
we obtain readily
\begin{equation} \label{e:q-form}
J_\tau(u,X_0)= \langle \cM_\tau X_0,X_0\rangle_{Y_\tau}
+ 2 \text{Re}\, \langle N_\tau X_0,u\rangle_{L^2(\tau,T;U)}
+ \langle \Lambda_\tau u,u\rangle_{L^2(\tau,T;U)}\,,
\end{equation}
where
\begin{equation} \label{e:maiuscoli}
\begin{split}
% \cM_tau
\langle \cM_\tau X_0,X_0\rangle_{Y_\tau}&:=\int_\tau^T \langle Q E(t,\tau)X_0,E(t,\tau)X_0\rangle_H\,dt
=\int_\tau^T \langle Q F(t,\tau)\xi_0,F(t,\tau)\xi_0\rangle_H\,dt
\\[1mm]
& \qquad + 2\text{Re}\,\int_\tau^T \left\langle Q F(t,\tau)\xi_0,\int_0^\tau M(t,\sigma,\tau) \xi(\sigma)\,d\sigma\right\rangle_H\,dt
\\[1mm]
& \qquad +\int_\tau^T \left\langle \int_0^\tau M(t,\sigma,\tau)\xi(\sigma)\,d\sigma,
\int_0^\tau M(t,q,\tau) \xi(q)\,dq \right\rangle_H\,dt
\\[3mm]
% N_tau
N_\tau X_0 &:=\big[L_\tau^* Q E(\cdot,\tau)X_0\big](\cdot)
\\[3mm]
% Lambda_tau
\Lambda_\tau & :=I+L_\tau^*QL_\tau\,,
\end{split}
\end{equation}
having set 
\begin{equation} \label{e:E}
E(t,\tau)X_0:= F(t,\tau)\xi_0 + \int_0^\tau M(t,\sigma,\tau)\xi(\sigma)\,d\sigma
\end{equation}
for the sake of brevity (although this abbreviated notation will seldom occur)
and where $I$ denotes the identity operator on $L^2(\tau,T;U)$.

A pretty standard argument is invoked now: notice that from the assumption $Q\ge 0$ it follows $\Lambda_\tau\ge I$; namely, the cost functional is coercive in the space $\cU_\tau=L^2(\tau,T;U)$
of admissible controls, and hence there exists a unique optimal control minimizing the cost \eqref{e:family-cost}.
The optimality condition
\begin{equation*}
\Lambda_\tau \hat{u}+N_\tau X_0=0
\end{equation*}
yields, on the one side, 
\begin{equation} \label{e:from-optimality}
\hat{u}=-\Lambda_\tau^{-1} N_\tau X_0\,.
\end{equation}
On the other side, recalling \eqref{e:maiuscoli} and rewriting explicitly $\Lambda_\tau$, 
we see that
\begin{equation*}
\hat{u}+L_\tau^*QL_\tau\hat{u}+L_\tau^* Q E(\cdot,\tau)X_0 =0\,,
\end{equation*}
that is 
\begin{equation*}
\hat{u}=- L_\tau^*Q\big[E(\cdot,\tau) X_0 + L_\tau\hat{u}\big]=- L_\tau^*Q\hat{w}\,.
\end{equation*}
This is nothing but a first representation of the optimal control in terms of the optimal state: 
\begin{equation} \label{e:feedback_0}
\hat{u}(t,\tau;X_0)=- [L_\tau^*Q\hat{w}(\cdot,\tau;X_0)](t)
= -\int_t^T B^*F(\sigma,t)^*Q\hat{w}(\sigma,\tau;X_0)\,d\sigma\,.
\end{equation}
We note that, as $w(\cdot)$ is a continuous function in view of \eqref{e:represent},
\eqref{e:feedback_0} establishes that the optimal control is continuous in time as well.
Thus, the statement S1. is proved.

% S2.

\subsection{Transition property for the optimal pair. Proof of the statement S2.}
In order to infer that the transition property fulfilled by the optimal state is inherited by the optimal control, we follow an argument which is pretty standard in the case of memoryless control systems.
Given $\tau_1>\tau$, and with $X_1$ as in \eqref{e:X_1} ($y_0$ and $y(\cdot)$ are defined therein),
we associate to the state equation with initial time $\tau_1$ and initial state $X_1$ the cost functional $J_{\tau_1}(u,X_1)$. 
With $\hat{w}(\cdot,\tau;X_0)$ the optimal state of the original control problem, assuming that
$\hat{w}(\cdot;\tau,X_0)$ restricted to $[\tau_1,T]$ is optimal for $J_{\tau_1}$
as well, then it follows from \eqref{e:feedback_0} that 
\begin{equation*}
\begin{split}
\hat{u}(t;\tau_1X_1)&= -\int_t^T B^*F(\sigma,t)^*Q\hat{w}(\sigma;\tau_1,X_1)\,d\sigma
\\[1mm]
&= -\int_t^T B^*F(\sigma,t)^*Q\hat{w}(\sigma;\tau,X_0)\,d\sigma
=\hat{u}(\cdot;\tau,X_0)\,, \qquad \tau_1< t\le T\,;
\end{split}
\end{equation*}
to wit, the optimal control satisfies a transition property as well.
In the following Lemma we prove that indeed the assumed condition holds true.

% LEMMA

\begin{lemma}
Let $(\hat{u}(\cdot;\tau,X_0),\hat{w}(\cdot;\tau,X_0))$ be the optimal pair of
problem \eqref{e:family-cauchy}-\eqref{e:family-cost}. %; the cost functional $J_\tau(u,X_0)$.
Then $(\hat{u}|_{[\tau_1,T]},\hat{w}|_{[\tau_1,T]})$ is the optimal pair of the minimization problem 
with functional $J_{\tau_1}(u,X_1)$.
\end{lemma}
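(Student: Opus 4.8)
The plan is to prove this \emph{principle of optimality} by the classical concatenation-and-contradiction scheme, the only genuinely new ingredient being the bookkeeping of the memory component, for which the transition property established above is tailor-made. First I would check that the candidate pair is \emph{admissible} for the subproblem: since the augmented datum $X_1$ in \eqref{e:X_1} is built from $\hat w(\cdot;\tau,X_0)$ restricted to $[0,\tau_1]$, the transition property (applied to the control $\hat u$) gives $\hat w(t;\tau,X_0)=\hat w(t;\tau_1,X_1)$ on $(\tau_1,T)$, so $\hat w|_{[\tau_1,T]}$ is exactly the mild solution of \eqref{e:family-cauchy} with datum $X_1$ at time $\tau_1$ and control $\hat u|_{[\tau_1,T]}$.

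Next I would record the additive splitting of the cost across $\tau_1$, namely
\begin{equation*}
J_\tau(u,X_0)=\int_\tau^{\tau_1}\big(\langle Qw(t),w(t)\rangle_H+\|u(t)\|_U^2\big)\,dt+J_{\tau_1}\big(u|_{[\tau_1,T]},X_1\big),
\end{equation*}
valid whenever the state $w(\cdot;\tau,X_0)$ and the history $X_1$ are matched as above. For the contradiction step, suppose some $v\in L^2(\tau_1,T;U)$ satisfied $J_{\tau_1}(v,X_1)<J_{\tau_1}(\hat u|_{[\tau_1,T]},X_1)$; I would then form the competitor $\tilde u$ on $[\tau,T]$ by concatenation, $\tilde u=\hat u$ on $[\tau,\tau_1]$ and $\tilde u=v$ on $(\tau_1,T]$. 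Its state $\tilde w$ coincides with $\hat w$ on $[\tau,\tau_1]$ (same datum $X_0$, same control there), hence the history built from $\tilde w$ up to $\tau_1$ is again $X_1$, and applying the transition property to $\tilde u$ identifies $\tilde w|_{[\tau_1,T]}$ as the state of the subproblem with datum $X_1$ and control $v$. Inserting this into the splitting identity gives $J_\tau(\tilde u,X_0)<J_\tau(\hat u,X_0)$, contradicting the optimality of $\hat u$. Thus $\hat u|_{[\tau_1,T]}$ minimizes $J_{\tau_1}(\cdot,X_1)$, and since that functional is strictly convex and coercive (the $\Lambda_{\tau_1}\ge I$ argument already used for S1.\ at time $\tau_1$), its minimizer is unique; therefore $(\hat u|_{[\tau_1,T]},\hat w|_{[\tau_1,T]})$ is \emph{the} optimal pair.

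The main obstacle -- the feature that distinguishes this from the memoryless case -- is guaranteeing that concatenating controls reproduces the correct state on $[\tau_1,T]$ in spite of the convolution term, which couples the dynamics on $(\tau_1,T)$ to the \emph{entire} past on $[0,\tau_1]$. This is exactly what the transition property resolves: the augmented datum $X_1=(\,w(\tau_1^+),\,w|_{[0,\tau_1]}\,)$ encodes precisely the history needed for the evolution from $\tau_1$ to match the evolution from $\tau$, and because $\tilde w$ and $\hat w$ share the same history up to $\tau_1$ they feed identical memory into $(\tau_1,T)$. Consequently the splitting is legitimate and both applications of the transition property are justified, so that no further estimate is required.
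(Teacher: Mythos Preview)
Your proposal is correct and follows essentially the same concatenation scheme as the paper: glue $\hat u$ on $[\tau,\tau_1]$ to a competitor on $[\tau_1,T]$, identify the resulting state, and split the cost additively at $\tau_1$. The only cosmetic differences are that the paper argues directly (for arbitrary $u$, derive $J_{\tau_1}(\hat u|_{[\tau_1,T]},X_1)\le J_{\tau_1}(u,X_1)$) rather than by contradiction, and it verifies that the concatenated state solves the original problem by explicitly checking the integro-differential equation on $[\tau,\tau_1]$ and $[\tau_1,T]$ separately, whereas you invoke the transition property; both routes are equivalent here.
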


\begin{proof}
Consider the optimal control problem \eqref{e:family-cauchy}-\eqref{e:family-cost}. 
By definition,  
\begin{equation*}
J_{\tau}(\hat{u},X_0)\le J_{\tau}(u,X_0) \qquad \forall u\in L^2(\tau,T;U)\,. 
\end{equation*}
Given $u\in L^2(\tau_1,T;U)$, let $w$ be the state which corresponds to the control 
function $u$ and to the initial datum $X_1$ defined by \eqref{e:X_1}: then 
$w$ satisfies (the same Cauchy problem as \eqref{e:eq-for-w_1})
\begin{equation} 
\begin{cases}
w'(t)-\displaystyle\int_{\tau_1}^t K(t-s) w(s)\,ds
=Aw(t)+ Bu(t) +\displaystyle\int_0^{\tau_1} K(t-s) y(s)\,ds
\\[3mm]
w_1(\tau_1)=\hat{w}(\tau_1)\,. 
\end{cases}
\end{equation}
Let us introduce
\begin{equation*}
\overline{u}(\cdot)=\begin{cases}
\hat{u}(\cdot) & \text{in $[\tau,\tau_1]$}
\\[1mm] u(\cdot) & \text{in $[\tau_1,T]$} 
\end{cases}\,,
\qquad 
\overline{w}(\cdot)=\begin{cases}
\hat{w}(\cdot) & \text{in $[\tau,\tau_1]$}
\\[1mm] w(\cdot) & \text{in $[\tau_1,T]$} 
\end{cases}\,.
\end{equation*}

If $t\in [\tau,\tau_1]$, then $\overline{w}(\cdot)$ is such that
\begin{equation} \label{e:wsegnato_a}
\begin{split} 
\overline{w}'(t)-\int_{\tau}^t K(t-s) \overline{w}(s)\,ds-A\overline{w}(t)
&= \hat{w}'(t)-\int_{\tau}^t K(t-s) \hat{w}(s)\,ds-A\hat{w}(t)
\\[1mm]
& = B\hat{u}(t) +\int_0^{\tau} K(t-s) \xi(s)\,ds\,. 
\end{split}
\end{equation}
When $t\in [\tau_1,T]$, one has 
\begin{equation} \label{e:wsegnato_b}
\begin{split} 
&\overline{w}'(t)-\int_{\tau}^t K(t-s) \overline{w}(s)\,ds-A\overline{w}(t)
\\[1mm]
& \qquad
= w'(t)-\int_{\tau}^{\tau_1} K(t-s) \hat{w}(s)\,ds-\int_{\tau_1}^t K(t-s) w(s)\,ds -A w(t)
\\[1mm]
& \qquad
= -\int_{\tau}^{\tau_1} K(t-s) \hat{w}(s)\,ds + Bu(t) +\int_0^{\tau_1} K(t-s) y(s)\,ds
\\[1mm]
& \qquad
= B u(t) +\int_0^\tau K(t-s) \xi(s)\,ds\,, 
\end{split}
\end{equation}
instead.
In view of \eqref{e:wsegnato_b} and \eqref{e:wsegnato_a}, we find that $\overline{w}$
satisfies
\begin{equation}
\overline{w}'(t)-\int_{\tau}^t K(t-s) \overline{w}(s)\,ds-A\overline{w}(t)=
B\overline{u}(t) +\int_0^{\tau} K(t-s) \xi(s)\,ds \quad \forall t\in [\tau, T]\,,
\end{equation}
which means that $\overline{w}(\cdot)$ is the state corresponding to the control $\overline{u}(\cdot)$, with initial state $X_0$, in $[\tau,T]$. 
Therefore we have 
\begin{equation*}
J_\tau(\hat{u},X_0)\le J_{\tau}(\overline{u},X_0)\,, 
\end{equation*}
which reads as
\begin{equation*} 
\int_\tau^T \left[\langle Q\hat{w}(t),\hat{w}(t)\rangle_H + \|\hat{u}(t)\|_U^2\right]dt
\le  \int_\tau^T \left[\langle Q\overline{w}(t),\overline{w}(t)\rangle_H 
+ \|\overline{u}(t)\|_U^2\right]dt\,.
\end{equation*}
Deleting the integrals between $\tau$ and $\tau_1$ in both sides, as $\hat{w}(\cdot)$
and $\overline{w}(\cdot)$ coincide on $[\tau,\tau_1]$, we obtain
\begin{equation*} 
\int_{\tau_1}^T \left[\langle Q\hat{w}(t),\hat{w}(t)\rangle_H + \|\hat{u}(t)\|_U^2\right]dt
\le  \int_{\tau_1}^T \left[\langle Q w(t),w(t)\rangle_H 
+ \|u(t)\|_U^2\right]dt\,.
\end{equation*}
that is
\begin{equation*}
J_{\tau_1}(\hat{u}|_{[\tau_1,T]},X_1)\le J_{\tau_1}(u,X_1)\,. 
\end{equation*}
Since $u(\cdot)$ was an arbitrarily chosen admissible control in $[\tau_1,T]$, 
it follows that $$\big(\hat{u}|_{[\tau_1,T]},\hat{w}|_{[\tau_1,T]}\big)$$
is the optimal pair for the minimization problem in the time interval $[\tau_1,T]$, 
whose functional is $J_{\tau_1}(u,X_1)$. This concludes the proof.

\end{proof}

In light of the above Lemma, the statement S2. of Theorem~\ref{t:main} is established.

% PROP. on transition

\begin{proposition} \label{p:transition}
The transition property
\begin{equation*}
\begin{cases}
\hat{u}(t;\tau,X_0)=\hat{u}(t;\tau_1,X_1)
\\[1mm]
\hat{w}(t;\tau,X_0)=\hat{w}(t;\tau_1,X_1)
\end{cases} \qquad \forall t\in (\tau_1,T)
\end{equation*}
holds true for the optimal control, just like it is valid for the optimal state.
\end{proposition}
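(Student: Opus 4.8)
The plan is to obtain both identities simultaneously as a direct consequence of the Lemma just proved, combined with the uniqueness of the optimal pair guaranteed by statement S1. The heavy lifting---namely, the verification that optimality is preserved under restriction, which is the dynamic-programming content of the problem---has in fact already been carried out in that Lemma; what remains is a short argument by uniqueness.

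First I would recall that, by the Lemma, the restriction to $[\tau_1,T]$ of the optimal pair $(\hat{u}(\cdot;\tau,X_0),\hat{w}(\cdot;\tau,X_0))$ is itself an optimal pair for the minimization problem with functional $J_{\tau_1}(\cdot,X_1)$, where $X_1$ is the restart datum from \eqref{e:X_1} formed out of the optimal trajectory. Since, by statement S1., the problem at initial time $\tau_1$ with datum $X_1$ admits a \emph{unique} optimal pair---denoted $(\hat{u}(\cdot;\tau_1,X_1),\hat{w}(\cdot;\tau_1,X_1))$---uniqueness forces the restricted pair to coincide with it on $[\tau_1,T]$. This yields at once $\hat{w}(t;\tau,X_0)=\hat{w}(t;\tau_1,X_1)$ and $\hat{u}(t;\tau,X_0)=\hat{u}(t;\tau_1,X_1)$ for $t\in(\tau_1,T)$, which is the assertion.

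Equivalently, and in keeping with the discussion preceding the Lemma, one may first deduce the state identity by uniqueness and then recover the control identity from the first-order representation \eqref{e:feedback_0}: inserting $\hat{w}(\sigma;\tau_1,X_1)=\hat{w}(\sigma;\tau,X_0)$ into $\hat{u}(t;\tau_1,X_1)=-\int_t^T B^*F(\sigma,t)^*Q\,\hat{w}(\sigma;\tau_1,X_1)\,d\sigma$ immediately gives $\hat{u}(t;\tau_1,X_1)=\hat{u}(t;\tau,X_0)$. The only point demanding care is the bookkeeping of the restart datum: one must ensure that the $X_1$ entering the $\tau_1$-problem is the one built from the \emph{optimal} state $\hat{w}(\cdot;\tau,X_0)$ via \eqref{e:X_1} (with $w=\hat{w}$), so that the Lemma---and with it the state transition property established earlier in this section---applies to precisely this datum. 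Beyond this consistency check, no further obstacle arises.
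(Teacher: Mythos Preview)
Your proposal is correct and essentially matches the paper's approach: the paper likewise concludes the transition property ``in light of the above Lemma,'' deriving the state identity from optimality of the restriction (via uniqueness) and then the control identity from the representation \eqref{e:feedback_0}. Your alternative of invoking uniqueness directly for both components is an equally valid shortcut.
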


We set now, for $t>\tau$,
\begin{equation} \label{e:transition-map}
\Phi(t,\tau)X_0:= \begin{pmatrix}
\hat{w}(t;\tau,X_0)
\\
\hat{y}(\cdot)
\end{pmatrix}
\end{equation}
with $\hat{y}(\cdot)$ as in \eqref{e:evolution-map}.
It is seen immediately that $\Phi(t,\tau)\colon Y_\tau\longrightarrow Y_t$
and that it is a linear map, owing to \eqref{e:from-optimality} combined with
\eqref{e:formula_1}.  
And notably, in view of Proposition~\ref{p:transition}, the transition property
\begin{equation*}
\Phi(t,\tau)=\Phi(t,\tau_1)\,\Phi(\tau_1,\tau) \qquad \forall \tau\in (\tau, t)
\end{equation*}
holds true.
%\marginpar{\bf \scriptsize va spiegato meglio}
Indeed, let $\tau<\tau_1<t$. Set 
\begin{equation*}
X_0=
\begin{pmatrix}
\xi_0
\\
\xi(\cdot)
\end{pmatrix}\,,
\quad 
X_1=\Phi(\tau_1,\tau)X_0= 
\begin{pmatrix}
\hat{w}(\tau_1,\tau;X_0)
\\[1mm] 
\hat{y}(\cdot)
\end{pmatrix}
\end{equation*}
where $\hat{y}(\cdot)$ is defined by
\begin{equation}\label{e:opt_X_1}
\hat{y}(\cdot) =\begin{cases}\xi(\cdot) & \text{in $[0,\tau]$}
\\
\hat{w}(\cdot,\tau,X_0) & \text{in $(\tau,\tau_1]$.}
\end{cases}
\end{equation}
Then, we have 
\begin{equation*}
\Phi(t,\tau_1)X_1= 
\begin{pmatrix}
\hat{w}(t,\tau_1;X_1)
\\[1mm] 
z(\cdot)
\end{pmatrix}\,,
\quad \text{\small where} \quad
z(\cdot)=
\begin{cases}\hat{y}(\cdot) & \text{in $[0,\tau_1]$}
\\
\hat{w}(\cdot,\tau_1;X_1) & \text{in $[\tau_1,t]$}
\end{cases}
\end{equation*}
that is
\begin{equation*}
z(\cdot)=
\begin{cases}
\xi & \text{in $[0,\tau]$} 
\\
\hat{w}(\cdot,\tau;X_0) & \text{in $[\tau,\tau_1]$}
\\
\hat{w}(\cdot,\tau_1;X_1) & \text{in $[\tau_1,t]$.}
\end{cases}
\end{equation*}
Thus, since in view of Proposition~\ref{p:transition} $\hat{w}(\cdot,\tau_1;X_1)=\hat{w}(\cdot,\tau;X_0)$
in $[\tau_1,T]$, we infer
\begin{equation*}
\Phi(t,\tau)X_0= 
\begin{pmatrix}
\hat{w}(t,\tau;X_0)
\\[1mm] 
X(\cdot)
\end{pmatrix}\,,
\quad \text{where} \quad 
X(\cdot)=
\begin{cases}
\xi & \text{in $[0,\tau]$} 
\\
\hat{w}(\cdot,\tau;X_0) & \text{in $[\tau,t]$,}
\end{cases}
\end{equation*}
i.e. $X(\cdot)\equiv z$; namely,
\begin{equation*}
\Phi(t,\tau)X_0=\Phi(t,\tau_1)X_1=\Phi(t,\tau_1)\,\Phi(\tau_1,\tau)X_0\,,
\qquad \forall \tau_1\in (\tau, t)\,.
\end{equation*}
Furthermore, as shown above, $\hat{u}(t,\tau;X_0)= \hat{u}(t,\tau_1;\Phi(\tau_1,\tau)X_0)$ holds as well.
\\
The proof of the statement S2. is concluded.

% CORE SECTIONS

% FEEDBACK FORMULA

\section{An ensemble of optimal cost operators, the feedback formula. 
Statements S3.~and S4.~of Theorem~\ref{t:main}}
\label{s:feedback}
While the existence of a unique optimal control for the optimization problem \eqref{e:family-cauchy}-\eqref{e:family-cost} follows by a standard argument,
we aim at providing a representation of the said optimal control in {\em feedback}
form ({\em open- vs closed-loop} control).
In order to achieve the intended goal, we will establish a first representation formula, that is \eqref{e:pre-feedback} below.
This can be done rather easily, starting from the formula \eqref{e:feedback_0} that connects the optimal control $\hat{u}$ to the optimal state $\hat{w}$ (a consequence of the optimality condition), and next taking advantage of the transition property satisfied by the optimal pair.
This analysis is carried out in Section~\ref{ss:pre-feedback}.

To single out within the said representation certain operators ($P_0(\tau)$ and $P_1(t,\tau)$) that also occur in the quadratic form that yields the optimal cost,
derived in section~\ref{ss:riccati-ops}, further computations are necessitated.
This is eventually explored and achieved in section~\ref{ss:challenge}.

% FORMULA PRE-FEEDBACK

\subsection{A first representation of the optimal control in terms of the optimal state}
\label{ss:pre-feedback}
The relation between the optimal control and the initial state \eqref{e:from-optimality} following from the optimality condition can be rendered more explicit, via a representation formula that will play a crucial role in the sequel.
Indeed, we have
\begin{equation*}
\begin{split}
\hat{u}(t)&=-\big[\Lambda_\tau^{-1} N_\tau X_0\big](t)
= -\Lambda_\tau^{-1}L_\tau^*Q\Big[F(\cdot,\tau)\xi_0+\int_0^\tau M(\cdot,\sigma,\tau)\xi(\sigma)\,d\sigma\Big](t)
\\[1mm]
& = -\left[\left\langle 
\begin{pmatrix}
\Lambda_\tau^{-1}L_\tau^*QF(\cdot,\tau)
\\[1mm]
\Lambda_\tau^{-1}L_\tau^*QM(\cdot,:,\tau)
\end{pmatrix}
\begin{pmatrix}
\xi_0
\\
\xi(:)
\end{pmatrix}
\right\rangle_{Y_\tau}\right](t)\,.
\end{split} 
\end{equation*} 
Thus, by introducing the notation
% LE 2 PSI
\begin{subequations} \label{e:def-psi12}
\begin{align}
\Psi_1(t,\tau) &:= -\Big[\Lambda_\tau^{-1}L_\tau^*QF(\cdot,\tau)\Big](t)\,,
\label{e:def-psi1}
\\
\Psi_2(t,\sigma,\tau) &:= -\Big[\Lambda_\tau^{-1}L_\tau^*QM(\cdot,\sigma,\tau)\Big](t)\,,
\label{e:def-psi2}
\end{align}
\end{subequations}
we finally attain  
\begin{equation} \label{e:u-intermsof-Psi}
\hat{u}(t)=\Psi_1(t,\tau)\xi_0+\int_0^\tau \Psi_2(t,\sigma,\tau)\xi(\sigma)\,d\sigma\,.
\end{equation} 

% LEMMA PRE-FEEDBACK

\begin{lemma}
Let $(\hat{u}(\cdot,\tau;X_0),\hat{w}(\cdot,\tau;X_0))$ ($(\hat{w},\hat{u})$, in short)
be the optimal pair for the minimization problem \eqref{e:family-cauchy}-\eqref{e:family-cost},
with initial state $X_0$ (as in \eqref{e:X_0}).
Then,
\begin{equation} \label{e:pre-feedback}
\begin{split}
\hat{u}(t,\tau;X_0)&=-\int_\tau^T B^*F(\sigma,t)^* Q Z_1(\sigma,t)\hat{w}(t,\tau;X_0)\,d\sigma
\\ %[1mm]
& \qquad -\int_\tau^T B^*F(\sigma,t)^* Q\int_0^t Z_2(\sigma,s,t)\hat{y}(s)\,ds\,d\sigma\,,
\end{split}
\end{equation}
where $\hat{y}(\cdot)$ is given by \eqref{e:evolution-map},
and having set for $\tau\le s\le t$
% LE 2 Z
\begin{subequations} \label{e:def-Z12}
\begin{align}
Z_1(t,\tau) &:= F(t,\tau)+\int_\tau^t F(t,\sigma) B\Psi_1(\sigma,\tau)\,d\sigma\,,
%\quad \tau\le t\,,
\\
Z_2(t,s,\tau) &:= M(t,s,\tau)+\int_\tau^t F(t,\sigma) B\Psi_2(\sigma,s,\tau)\,d\sigma
%\quad \tau\le s\le t
\end{align}
\end{subequations}
(with $\Psi_i$, $i=1,2,$ as in \eqref{e:def-psi12}).

\end{lemma}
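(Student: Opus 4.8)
The plan is to convert the ``pointwise-in-reference-time'' identity \eqref{e:feedback_0}, namely $\hat u(t,\tau;X_0)=-\int_t^T B^*F(\sigma,t)^*Q\,\hat w(\sigma,\tau;X_0)\,d\sigma$, into the asserted representation by expressing the \emph{future} optimal state $\hat w(\sigma,\tau;X_0)$, for $\sigma\ge t$, through the state and the history accumulated up to the current time $t$. The two ingredients that make this possible are a representation of the optimal state itself in terms of the operators $Z_1,Z_2$, and the transition property of Proposition~\ref{p:transition}, which lets me regard the tail of the optimal trajectory as the optimal trajectory of a subproblem restarted at time $t$.

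First I would establish the intermediate representation of the optimal state. Substituting the explicit form \eqref{e:u-intermsof-Psi} of $\hat u$ into the input-to-state formula \eqref{e:formula_1} (equivalently \eqref{e:represent}) and using the definition \eqref{e:L_tau} of $L_\tau$, the feedback contribution becomes $\int_\tau^t F(t,s)B\big[\Psi_1(s,\tau)\xi_0+\int_0^\tau\Psi_2(s,\sigma,\tau)\xi(\sigma)\,d\sigma\big]\,ds$. Interchanging the order of the $s$- and $\sigma$-integrations (justified since the kernels are $L^2$ in time and $B,Q$ are bounded) and collecting the coefficient of $\xi_0$ and of $\xi(\sigma)$, I recognize exactly the definitions \eqref{e:def-Z12}. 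This yields $\hat w(t,\tau;X_0)=Z_1(t,\tau)\xi_0+\int_0^\tau Z_2(t,\sigma,\tau)\xi(\sigma)\,d\sigma$, an identity valid for an arbitrary reference time, not only for $\tau$.

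Next I would invoke the transition property. For $\sigma\ge t$ one has $\hat w(\sigma,\tau;X_0)=\hat w(\sigma,t;X_t)$, where $X_t=\Phi(t,\tau)X_0=\big(\hat w(t,\tau;X_0),\hat y(\cdot)\big)$ with $\hat y$ as in \eqref{e:evolution-map}. Applying the state representation just derived, now with reference time $t$ and datum $X_t$, gives $\hat w(\sigma,\tau;X_0)=Z_1(\sigma,t)\,\hat w(t,\tau;X_0)+\int_0^t Z_2(\sigma,s,t)\,\hat y(s)\,ds$ for $\sigma\ge t$. Inserting this into \eqref{e:feedback_0} and separating the two terms produces precisely \eqref{e:pre-feedback}; the lower limit of integration is the evaluation time $t$, consistent with the backward (causal) structure of $L_\tau^*$ in \eqref{e:L_tau-star}, so that the displayed integral coincides with the stated one.

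The main obstacle is the reference-time shift in the second step: one must be sure that the $Z_1,Z_2$ representation established with reference time $\tau$ transfers \emph{verbatim} to reference time $t$ with datum $X_t$. This rests on the structural invariance of the Volterra problem \eqref{e:family-cauchy} under translation of the initial time together with Proposition~\ref{p:transition}, which guarantees that the restarted optimal control and state agree with the original ones on $[t,T]$; the remaining manipulations are routine Fubini interchanges and regrouping.
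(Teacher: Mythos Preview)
Your proposal is correct and follows essentially the same route as the paper: derive the representation $\hat w(t,\tau;X_0)=Z_1(t,\tau)\xi_0+\int_0^\tau Z_2(t,s,\tau)\xi(s)\,ds$ by inserting \eqref{e:u-intermsof-Psi} into \eqref{e:represent}, then apply the transition property (Proposition~\ref{p:transition}) to rewrite $\hat w(\sigma,\tau;X_0)=\hat w(\sigma,t;\Phi(t,\tau)X_0)$ for $\sigma\ge t$, and substitute into \eqref{e:feedback_0}. Your remark that the outer integral naturally runs over $[t,T]$ (rather than $[\tau,T]$ as printed in the statement) is well taken and matches what the paper's own derivation actually produces.
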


% Dim. FEEDBACK_1

\begin{proof}
We rewrite the representation formula \eqref{e:represent} for the mild solutions to the integro-differential problem, with $\hat{w}$ and $\hat{u}$ in place of $w$ and $u$, respectively:
\begin{equation*}
\hat{w}(t,\tau;X_0)= F(t,\tau)\xi_0 + \int_0^\tau M(t,\sigma,\tau) \xi(\sigma)\,d\sigma
+ \int_\tau^t F(t,\sigma)B\hat{u}(\sigma)\,d\sigma\,.
\end{equation*}
Next, taking into account the expression \eqref{e:u-intermsof-Psi} of the optimal control
$\hat{u}$, we obtain
\begin{equation*}
\begin{split}
\hat{w}(t,\tau;X_0)
&= F(t,\tau)\xi_0 + \int_0^\tau M(t,\sigma,\tau) \xi(\sigma)\,d\sigma
\\
& \qquad\qquad
+ \int_\tau^t F(t,\sigma)B \Big[\Psi_1(\sigma,\tau)\xi_0+\int_0^\tau \Psi_2(\sigma,s,\tau)\xi(s)\,ds\Big]\,d\sigma\,,
\end{split}
\end{equation*}
which becomes
\begin{equation} \label{e:w-intermsof-Z}
\hat{w}(t,\tau;X_0)=Z_1(t,\tau)\xi_0 + \int_0^\tau Z_2(t,s,\tau) \xi(s)\,ds\,,
\end{equation}
by making use of the novel functions $Z_1(t,\tau)$ and $Z_2(t,s,\tau)$ defined in \eqref{e:def-Z12}.
(Just note the similar structure of the representations \eqref{e:u-intermsof-Psi} and \eqref{e:w-intermsof-Z}.)

Thus, recalling the expression \eqref{e:feedback_0} of the optimal control $\hat{u}(t;\tau,X_0)$ in terms of the optimal state (that follows from the optimality condition), we arrive at 
\begin{equation*}
\begin{split}
\hat{u}(t,\tau;X_0) &= - \int_t^T B^*F(\sigma,t)^*Q \hat{w}(\sigma,\tau;X_0)\,d\sigma
\\
& =- \int_t^T B^*F(\sigma,t)^*Q \hat{w}(\sigma,t;\Phi(t,\tau) X_0)\,d\sigma
\\
& =- \int_t^T B^*F(\sigma,t)^*Q Z_1(\sigma,t) \hat{w}(t,\tau;X_0)\,d\sigma
\\
& \qquad - \int_t^T B^*F(\sigma,t)^*Q \int_0^t Z_2(\sigma,s,t) \hat{y}(s)\,ds\,d\sigma\,,
\end{split}
\end{equation*}
which establishes \eqref{e:pre-feedback}.
We just observe that in the second equality we have used the transition property fulfilled by the optimal state, while in the third one we appealed to the representation \eqref{e:w-intermsof-Z} of the optimal state $\hat{w}(\sigma,t;\Phi(t,\tau)X_0)$ in terms of the initial datum $\Phi(t,\tau)X_0$, which reads as
\begin{equation*}
\begin{split}
&\hat{w}(\sigma,t;\Phi(t,\tau) X_0)= Z_1(\sigma,t) \hat{w}(t,\tau;X_0)
+\int_0^t Z_2(\sigma,s,t) \hat{y}(s)\,ds
\\
&\qquad = Z_1(\sigma,t) \hat{w}(t,\tau;X_0) +\int_0^\tau Z_2(\sigma,s,t) \xi(s)\,ds
+ \int_\tau^t Z_2(\sigma,s,t) \hat{w}(s,\tau;X_0)\,ds\,.
\end{split}
\end{equation*}
\end{proof}

% RICCATI OPS

\subsection{The optimal cost operators. Proof of statement S3.} \label{ss:riccati-ops}
We now use the formulas \eqref{e:w-intermsof-Z} and \eqref{e:u-intermsof-Psi} 
-- with $\Psi_i$, $Z_i$, $i=1,2,$ defined in \eqref{e:def-psi12} and \eqref{e:def-Z12},
respectively -- to compute the optimal value of the cost functional \eqref{e:family-cost}:
\begin{equation}
\begin{split}
J_\tau(u,X_0)&=\int_\tau^T \big[\langle Q\hat{w}(\sigma,\tau;X_0),\hat{w}(\sigma,\tau;X_0\rangle_H 
+ \|\hat{u}(\sigma,\tau;X_0)\|_U^2\big]\,d\sigma\
\\
& = \int_\tau^T \Big\{\Big\|Q^{1/2}\Big[Z_1(\sigma,\tau)\xi_0 + \int_0^\tau Z_2(\sigma,s,\tau) \xi(s)\,ds\Big]\Big\|^2_H
\\
& \myspace
+ \Big\|\Psi_1(\sigma,\tau)\xi_0+\int_0^\tau \Psi_2(\sigma,s,\tau)\xi(s)\,ds\Big\|_U^2\Big\}\,d\sigma\,.
\end{split}
\end{equation}
We replicate the formula, with $\Phi(\tau_1,\tau)X_0$ and $\hat{y}(\cdot)$ as in \eqref{e:opt_X_1} in place of $X_0$ and $\xi(\cdot)$, respectively, to find 
\begin{equation}
\begin{split}
& J_{\tau_1}(\hat{u},\Phi(\tau_1,\tau)X_0)
= \int_{\tau_1}^T \Big\{\Big\|Q^{1/2}\Big[Z_1(\sigma,\tau_1)\hat{w}(\tau_1,\tau;X_0) 
+ \int_0^{\tau_1}Z_2(\sigma,s,\tau_1) \hat{y}(s)\,ds\Big]\Big\|^2_H 
\\
& \myspace
+ \Big\|\Psi_1(\sigma,\tau_1)\hat{w}(\tau_1,\tau;X_0) +\int_0^{\tau_1} \Psi_2(\sigma,s,\tau_1)\hat{y}(s)\,ds\Big\|_U^2\Big\}\,d\sigma\,.
\end{split}
\end{equation}
Computing the squares, we find
\begin{equation*}
\begin{split}
& J_{\tau_1}(\hat{u},\Phi(\tau_1,\tau)X_0)
= \int_{\tau_1}^T \Big\{\Big\|Q^{1/2}Z_1(\sigma,\tau_1)\hat{w}(\tau_1,\tau;X_0)\Big\|_H^2
\\
& \myspace
+ 2\text{Re}\, \Big\langle Q Z_1(\sigma,\tau_1)\hat{w}(\tau_1,\tau;X_0),\int_0^{\tau_1}Z_2(\sigma,s,\tau_1) \hat{y}(s)\,ds\Big\rangle_H  
\\
& \myspace
+ \Big\|Q^{1/2}\int_0^{\tau_1}Z_2(\sigma,s,\tau_1) \hat{y}(s)\,ds\Big\|_H^2
+ \big\|\Psi_1(\sigma,\tau_1)\hat{w}(\tau_1,\tau;X_0)\Big\|_U^2 
\\
& \myspace
+  2\text{Re}\,\Big\langle \Psi_1(\sigma,\tau_1)\hat{w}(\tau_1,\tau;X_0),\int_0^{\tau_1} \Psi_2(\sigma,s,\tau_1)\hat{y}(s)\,ds\Big\rangle_U
\\
& \myspace
+ \Big\|\int_0^{\tau_1} \Psi_2(\sigma,s,\tau_1)\hat{y}(s)\,ds\Big\|_U^2\Big\}\,d\sigma\,.
\end{split}
\end{equation*}
Suitably rearranging the summands and setting 
\begin{subequations} \label{e:riccati-ops}
\begin{align}
P_0(t)&=\int_t^T \big[\Psi_1(p,t)^*\Psi_1(p,t)+Z_1(p,t)^*Q Z_1(p,t)\big]\,dp\,,
\label{e:P_0}
\\[1mm]
P_1(t,s)&=\int_t^T \big[\Psi_1(p,t)^*\Psi_2(p,s,t)+Z_1(p,t)^*QZ_2(p,s,t)\big]\,dp
\label{e:P_1}
\\[1mm]
P_2(t,s,q)&=\int_t^T \big[\Psi_2(p,q,t)^*\Psi_2(p,s,t)+Z_2(p,q,t)^*QZ_2(p,s,t)\big]\,dp\,,
\label{e:P_2}
\end{align}
\end{subequations}
we obtain
\begin{equation*} 
\begin{split}
J_{\tau_1}(\hat{u},\Phi(\tau_1,\tau)X_0)
& = 
\big\langle P_0(\tau_1)\hat{w}(\tau_1,\tau;X_0),\hat{w}(\tau_1,\tau;X_0)\big\rangle_H 
\\
& \qquad + 2\text{Re}\,\int_0^{\tau_1} 
\big\langle  P_1(\tau_1,s) \hat{y}(s),\hat{w}(\tau_1,\tau;X_0)\big\rangle_H \,ds
\\
& \qquad
+ \int_0^{\tau_1} \int_0^{\tau_1} \big\langle P_2(\tau_1,s,q) \hat{y}(s),\hat{y}(q)\big\rangle\,ds\,dq\,,
\end{split}
\end{equation*}
which is rewritten as
\begin{equation} \label{e:optimal-cost_2}
J_{\tau_1}(\hat{u},\Phi(\tau_1,\tau)X_0)
= \big\langle P(\tau_1) \Phi(\tau_1,\tau)X_0, \Phi(\tau_1,\tau)X_0\big\rangle_{Y_{\tau_1}}\,, 
\end{equation}
with $P(\cdot)$ the operator defined by \eqref{e:big-riccati}.

If we set now $\tau_1=\tau$ in \eqref{e:optimal-cost_2}, we finally establish the sought representation \eqref{e:optimal-cost_1} of the optimal cost as a quadratic form on the state space $Y_\tau$.

\begin{remark}
\begin{rm}
The matrix operator $P(\cdot)$ is the `want-to-be' Riccati operator of the optimal control problem,
namely, a candidate solution to an appropriate Riccati equation in a
dense subspace of the space $Y_\tau$.
This constitutes a first part of the statement of Theorem~\ref{t:big-riccati}. 
It will be proved rigorously in Section~\ref{s:uniqueness}, on the basis of the assertion S5. of Theorem~\ref{t:main} and in the light of Proposition~\ref{p:DRE_equiv}.
\end{rm}
\end{remark}

We note that the properties 
\begin{equation}\label{e:P_i_pos_self}
P_0(t)=P_0(t)^*\ge 0,\quad P_2(t,s,q)=P_2(t,q,s)=P_2(t,s,q)^*\ge 0\,.
\end{equation}
are intrinsic to the respective definitions \eqref{e:P_0} and \eqref{e:P_2} of the operators
$P_0$ and $P_2$.

It is worth emphasizing at the outset and explicitly the basic regularity properties of the optimal cost operators $P_i$ ($i=1,2,3$).

% BASIC REGULARITY for P_i (PAOLO)

\begin{proposition} \label{p:cont_P} 
The operators $P_0(t)$, $P_1(t,s)$, $P_2(t,s,q)$ defined in \eqref{e:riccati-ops}
possess the following regularity:
\begin{itemize}

\item 
for every $t\in [0,T]$ and $s,q\in [0,t]$, $P_0(t)$, $P_1(t,s)$, $P_2(t,s,q)$ belong to $\cL(H)$,
with respective norms bounded by some constant $c$;

\item 
$P_0(t)$, $P_1(t,s)$, $P_2(t,s,q)$ are continuous functions (with respect to their variables),
with values in $\cL(H)$. 

\end{itemize}
\end{proposition}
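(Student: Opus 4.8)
The plan is to establish boundedness and continuity of $P_0$, $P_1$, $P_2$ by reducing everything to the corresponding properties of the building-block operators $F(t,\tau)$, $M(t,\sigma,\tau)$, $\Psi_i$ and $Z_i$. Looking at the defining formulas \eqref{e:P_0}--\eqref{e:P_2}, each $P_i$ is an integral over $[t,T]$ of products involving $Q$ (which is bounded by assumption) together with $\Psi_1,\Psi_2,Z_1,Z_2$. Since $Q\in\cL(H)$, it suffices to control these four families in $\cL$-norm and to show they depend continuously on their parameters.

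\medskip
First I would verify that $F(t,\tau)$ and $M(t,\sigma,\tau)$ are bounded and jointly continuous. For $F$, this follows from \eqref{e:F}: the strongly continuous semigroup satisfies $\|e^{tA}\|_{\cL(H)}\le Ce^{\omega t}$ on the compact interval $[0,T]$, and the resolvent kernel $R$ from \eqref{e:resolvent-op}--\eqref{e:iterated} is an $L^1$ (indeed $L^2$) function built by convolution from $\mu$, which itself is a continuous $\cL(H)$-valued map since $K\in L^2$ and the semigroup is strongly continuous. The analogous bound for $M$ follows from \eqref{e:M}, \eqref{e:G}, \eqref{e:mu} in the same way. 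Next I would bound $\Psi_1,\Psi_2$ via \eqref{e:def-psi12}: here the key ingredient is that $\Lambda_\tau=I+L_\tau^*QL_\tau\ge I$, so $\Lambda_\tau^{-1}$ is bounded with $\|\Lambda_\tau^{-1}\|\le 1$ uniformly in $\tau$, while $L_\tau$ and $L_\tau^*$ are bounded operators on the relevant $L^2$ spaces over $[0,T]$ (again by the bound on $F$). Composing bounded operators then yields uniform $\cL$-bounds for $\Psi_1,\Psi_2$, and consequently for $Z_1,Z_2$ through \eqref{e:def-Z12}. Feeding these into \eqref{e:P_0}--\eqref{e:P_2} and integrating over the bounded interval $[t,T]\subseteq[0,T]$ produces the uniform constant $c$ in the first bullet.

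\medskip
For the continuity claim in the second bullet, I would proceed by continuity of compositions and of parameter-dependent integrals. The continuity of $F$, $M$, $\mu$, $R$ in their arguments propagates through \eqref{e:def-psi12} and \eqref{e:def-Z12}; the one delicate point is continuity of $\tau\mapsto\Lambda_\tau^{-1}$, which I would handle by showing $\tau\mapsto\Lambda_\tau$ is continuous in operator norm (using continuity of $L_\tau$ in $\tau$) and then invoking the fact that inversion is continuous on the set of invertible operators bounded below uniformly. Once $\Psi_i,Z_i$ are known to be continuous $\cL(H)$-valued functions of all their variables, the continuity of $P_0,P_1,P_2$ follows because the integrands in \eqref{e:P_0}--\eqref{e:P_2} are continuous and uniformly bounded on the compact domain, so dominated convergence (or uniform continuity on compacta) gives continuity of the integrals, including continuity in the lower endpoint $t$.

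\medskip
The main obstacle I expect is the uniform-in-$\tau$ control and continuity of the operators $L_\tau$, $L_\tau^*$ and hence $\Lambda_\tau^{-1}$, since these act on $\tau$-dependent spaces $L^2(\tau,T;\cdot)$; I would circumvent this by extending all controls by zero to a fixed space $L^2(0,T;\cdot)$, so that the $L_\tau$ become a continuously varying family of operators on one fixed Hilbert space, after which the uniform coercivity $\Lambda_\tau\ge I$ makes the bounds and the norm-continuity of inversion routine.
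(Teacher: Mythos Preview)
Your proposal is correct and follows essentially the same approach as the paper: the paper's own proof is a one-sentence sketch stating that the assertions follow from a careful analysis of the continuity and boundedness properties of the building-block operators $\Psi_i$, $Z_i$, $F$, $M$ (and $G$, $R$, $\mu$), with the details omitted. You have supplied precisely those details, including the uniform bound $\|\Lambda_\tau^{-1}\|\le 1$ and the device of extending by zero to work on a fixed $L^2$ space, which are natural ways to carry out what the paper leaves implicit.
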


\begin{proof}
The assertions follow given the respective definitions of $P_0(t)$, $P_1(t,s)$ and $P_2(t,s,q)$, 
after a careful analysis of the continuity properties of the operators $\Psi_i$, $i=1,2$ 
(see \eqref{e:def-psi12}), $Z_i$, $i=1,2$ (see \eqref{e:def-Z12}, $F$, $M$ (and $G$, $R$, $\mu$;
see \eqref{e:F}, \eqref{e:M}, \eqref{e:G}, \eqref{e:mu}, \eqref{e:resolvent-op}). 
We omit the details.
\end{proof}

% SUBSECT. CLOSED-LOOP

\subsection{The feedback representation of the optimal control. Proof of statement S4.}\label{ss:challenge}
In the previous subsections we derived 

\begin{itemize}

\item
a first (pointwise in time) representation of the optimal control in terms of the optimal state, that is \eqref{e:pre-feedback};

\item
the representation of the optimal cost $J_\tau(\hat{u})$ as the quadratic form 
\eqref{e:optimal-cost_1} in the space $Y_\tau$. 

\end{itemize}
However, differently from the case of the LQ problem for memoryless equations, one cannot single out readily the presence of the operator $P(\cdot)\in \cL(Y_\tau)$ associated with this quadratic form within the formula \eqref{e:pre-feedback}.
To disclose the said presence, the following result is critical.

% KEY LEMMA

\begin{lemma}[Key Lemma] \label{l:key-lemma}
With the functions $\Psi_1(\sigma,t)$ and $\Psi_2(\sigma,s,t)$ defined in \eqref{e:def-psi12}, and $Z_1(\sigma,t)$ and $Z_2(\sigma,s,t)$ defined in \eqref{e:def-Z12}, the following identities hold true:
\begin{equation} \label{e:critical}
\begin{split}
\int_t^T F(\sigma,t)^*QZ_1(\sigma,t)\,d\sigma
&= \int_t^T \big[Z_1(\sigma,t)^*QZ_1(\sigma,t) +\Psi_1(\sigma,t)^*\Psi_1(\sigma,t)\big]\,d\sigma\,,
\\[1mm]
\int_t^T F(\sigma,t)^*QZ_2(\sigma,s,t)\,d\sigma
& = \int_t^T \big[Z_1(\sigma,t)^*QZ_2(\sigma,s,t)+\Psi_1(\sigma,t)^*\Psi_2(\sigma,s,t)\big]\,d\sigma\,.
\\[1mm]
\int_t^T M(p,q,t)^*QZ_2(p,s,t)\,dp
&= \int_t^T \big[Z_2(p,q,t)^*QZ_2(p,s,t)+\Psi_2(p,q,t)^*\Psi_2(p,s,t)\big]\,dp\,.
\end{split}
\end{equation}
As a consequence, the optimal cost operators $P_i$, $i=1,2,3$ in \eqref{e:riccati-ops}
admit the following respective representations, as well:
\begin{subequations} \label{e:riccati-ops_2}
\begin{align}
P_0(t)&=\int_t^T F(\sigma,t)^*QZ_1(\sigma,t)\,d\sigma
\label{e:P_0_v2}
\\[1mm]
P_1(t,s)&=\int_t^T F(\sigma,t)^*QZ_2(\sigma,s,t)\,d\sigma
\label{e:P_1_v2}
\\[1mm]
P_2(t,s,q)&=\int_t^T M(p,q,t)^*QZ_2(p,s,t)\,dp\,.
\label{e:P_2_v2}
\end{align}
\end{subequations}

\end{lemma}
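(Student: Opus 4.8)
The plan is to recast the three pointwise-in-time identities \eqref{e:critical} as genuine operator identities and to reduce all of them to a single algebraic relation. Throughout I take the initial time equal to $t$, so that $L_t$, $L_t^*$ are the operators \eqref{e:L_tau}, \eqref{e:L_tau-star} and $\Lambda_t=I+L_t^*QL_t$ is as in \eqref{e:maiuscoli}. I first regard $F(\cdot,t)$ as a bounded operator $H\to L^2(t,T;H)$ and $M(\cdot,\cdot,t)$ as the bounded operator $L^2(0,t;H)\to L^2(t,T;H)$, $\xi\mapsto\int_0^t M(\cdot,s,t)\xi(s)\,ds$. With these conventions the definitions \eqref{e:def-psi12}, \eqref{e:def-Z12} read, at the operator level (since $Z_i=G+L_t\Psi_i$ for $G\in\{F,M\}$),
\begin{equation*}
\Psi_1=-\Lambda_t^{-1}L_t^*QF,\quad Z_1=(I-L_t\Lambda_t^{-1}L_t^*Q)F,\quad \Psi_2=-\Lambda_t^{-1}L_t^*QM,\quad Z_2=(I-L_t\Lambda_t^{-1}L_t^*Q)M.
\end{equation*}
All the operators involved are bounded: $L_t$, $L_t^*$, $Q$, $F$, $M$ by construction, while $\Lambda_t\ge I$ (because $Q\ge 0$), so $\Lambda_t=\Lambda_t^*$ is boundedly invertible with $(\Lambda_t^{-1})^*=\Lambda_t^{-1}$.

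Next I observe that each line of \eqref{e:critical} is a special case of the single \emph{master identity}
\begin{equation*}
G_1^*QZ_2=Z_1^*QZ_2+\Psi_1^*\Psi_2,\qquad \Psi_i=-\Lambda_t^{-1}L_t^*QG_i,\quad Z_i=(I-L_t\Lambda_t^{-1}L_t^*Q)G_i,
\end{equation*}
obtained by taking $(G_1,G_2)=(F,F)$, $(F,M)$, $(M,M)$ in turn. Indeed, recognizing the composite integrals as operator compositions and adjoints --- for instance $\int_t^T\Psi_1(p,t)^*\Psi_2(p,s,t)\,dp$ is the kernel of $\Psi_1^*\Psi_2$, and $\int_t^T F(\sigma,t)^*QZ_2(\sigma,s,t)\,d\sigma$ is the kernel of $F^*QZ_2$ --- turns the right-hand sides of \eqref{e:critical} into $Z_1^*QZ_1+\Psi_1^*\Psi_1$, $Z_1^*QZ_2+\Psi_1^*\Psi_2$, $Z_2^*QZ_2+\Psi_2^*\Psi_2$ and the left-hand sides into $F^*QZ_1$, $F^*QZ_2$, $M^*QZ_2$. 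Comparison with the defining formulas \eqref{e:riccati-ops} shows that the master identity simultaneously establishes \eqref{e:critical} and the alternative representations \eqref{e:riccati-ops_2} of $P_0$, $P_1$, $P_2$.

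The master identity itself is a short computation. From $\Lambda_t=\Lambda_t^*$ one gets $\Psi_1^*=-G_1^*QL_t\Lambda_t^{-1}$, hence $\Psi_1^*\Psi_2=G_1^*QL_t\Lambda_t^{-2}L_t^*QG_2$, while $Z_1^*QZ_2=G_1^*(I-QL_t\Lambda_t^{-1}L_t^*)Q(I-L_t\Lambda_t^{-1}L_t^*Q)G_2$. Expanding the middle product and invoking the crucial relation $L_t^*QL_t=\Lambda_t-I$ --- whence $\Lambda_t^{-1}(L_t^*QL_t)\Lambda_t^{-1}=\Lambda_t^{-1}-\Lambda_t^{-2}$ --- the inner factor of $Z_1^*QZ_2$ collapses to $Q-QL_t\Lambda_t^{-1}L_t^*Q-QL_t\Lambda_t^{-2}L_t^*Q$; adding the contribution $QL_t\Lambda_t^{-2}L_t^*Q$ coming from $\Psi_1^*\Psi_2$ cancels the last term and leaves $Q-QL_t\Lambda_t^{-1}L_t^*Q=Q(I-L_t\Lambda_t^{-1}L_t^*Q)$. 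Sandwiching between $G_1^*$ and $G_2$ yields precisely $G_1^*QZ_2$, as claimed.

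The delicate point is not the algebra but the bookkeeping beneath it. The main obstacle will be to justify the passage from the kernel (pointwise-in-time) expressions to the operator compositions: using the explicit form \eqref{e:L_tau-star} of $L_t^*$ to verify $L_t^*QL_t=\Lambda_t-I$ and the adjoint relations $(L_t)^*=L_t^*$, $(\Lambda_t^{-1})^*=\Lambda_t^{-1}$, and checking that the Fubini rearrangements implicit in rewriting, say, $\int_t^T F(\sigma,t)^*Q\int_0^t Z_2(\sigma,s,t)\xi(s)\,ds\,d\sigma$ as $\int_0^t\big[\int_t^T F(\sigma,t)^*QZ_2(\sigma,s,t)\,d\sigma\big]\xi(s)\,ds$ are legitimate. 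These rest on the continuity and uniform boundedness of $F$, $M$, $Z_i$, $\Psi_i$ recorded in Proposition~\ref{p:cont_P} and the surrounding estimates; once those are in hand the operator identity is rigorous, and the three displays of \eqref{e:critical}, together with the representations \eqref{e:riccati-ops_2}, follow at once.
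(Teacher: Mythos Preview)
Your proof is correct, and the algebraic core---the relation $L_t^*QL_t=\Lambda_t-I$ combined with the factorizations $\Psi_i=-\Lambda_t^{-1}L_t^*QG_i$ and $Z_i=(I-L_t\Lambda_t^{-1}L_t^*Q)G_i$---is exactly what the paper uses as well. The organization, however, is genuinely different. The paper proves the three identities \eqref{e:critical} one at a time, each time working at the kernel level: it forms the difference of the two sides, substitutes $F-Z_1=-L_t\Psi_1$ (or the analogue with $M$, $Z_2$, $\Psi_2$), applies Fubini to recognize an $L_t^*Q(\cdot)$ term, substitutes back, and finally invokes $\Lambda_t\Psi_i=-L_t^*QG_i$ to obtain cancellation. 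You instead lift everything to the operator level, observe that all three displays are instances of a single master identity $G_1^*QZ_2=Z_1^*QZ_2+\Psi_1^*\Psi_2$, and verify that identity by a direct expansion. Your route is shorter and more transparent---it makes clear that nothing distinguishes the three cases beyond the choice of $G_1,G_2\in\{F,M\}$---at the modest cost of the bookkeeping you flag (identifying the integral kernels with the operator compositions and justifying the Fubini steps), which the paper handles implicitly by staying at the integral level throughout.
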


\begin{proof}
(i) In order to establish the first one of the identities \eqref{e:critical},
we take the difference between its left and right hand sides, that is
\begin{equation*}
\begin{split}
& \int_t^T \big[F(\sigma,t)^*-Z_1(\sigma,t)^*\big]QZ_1(\sigma,t)\,d\sigma
-\int_t^T \Psi_1(\sigma,t)^*\Psi_1(\sigma,t)\,d\sigma
\\
& \quad = - \int_t^T\Big[\int_t^\sigma \Psi_1(q,t)^*B^*F(\sigma,q)^*\,dq\Big]
Q Z_1(\sigma,t)\,d\sigma
-\int_t^T \Psi_1(\sigma,t)^*\Psi_1(\sigma,t)\,d\sigma
\\
& \quad = - \int_t^T\int_q^T \Psi_1(q,t)^*B^*F(\sigma,q)^*Q Z_1(\sigma,t)\,d\sigma dq
-\int_t^T \Psi_1(q,t)^*\Psi_1(q,t)\,dq
\\
& \quad = - \int_t^T \Psi_1(q,t)^*\Big[\big[L_t^* Q Z_1(\cdot,t)\big](q)+\Psi_1(q,t)\Big]\,dq\,,
\end{split}
\end{equation*}
where we made use of the definitions of $Z_1$ (see \eqref{e:def-Z12}) as well
as the one of $L_\tau^*$ (see \eqref{e:L_tau-star}).

With the last expression as a starting point we substitute once more the expression of $Z_1$ and
move on with the computations, to find
\begin{equation*}
\begin{split}
& \int_t^T \big[F(\sigma,t)^*-Z_1(\sigma,t)^*\big]QZ_1(\sigma,t)\,d\sigma
-\int_t^T \Psi_1(\sigma,t)^*\Psi_1(\sigma,t)\,d\sigma
\\
& \quad = 
- \int_t^T \Psi_1(q,t)^*\big[L_t^* Q [F(\cdot,t)+L_t\Psi_1(\cdot,t)](q)+\Psi_1(q,t)\big]\,dq
\\
& \quad = - \int_t^T \Psi_1(q,t)^*\big[\big(L_t^* Q F(\cdot,t)\big)(q)
+[\Lambda_t \Psi_1(\cdot,t)](q)\big]\,dq
\\
& \quad = - \int_t^T \Psi_1(q,t)^*\big[\big(L_t^* Q F(\cdot,t)\big)(q)
-\big(L_t^* Q F(\cdot,t)\big)(q)\big]\,dq\equiv 0\,,
\end{split}
\end{equation*}
as desired.
We note that in the last but one equality we recalled $L_t^* QL_t+I=:\Lambda_t$,
while in the last equality we utilized once again the definition of $\Psi_1$
in \eqref{e:def-psi12}.

\smallskip
\noindent
(ii) We proceed in an analogous way, {\em mutatis mutandis}.
A first series of passages leads to
\begin{equation*}
\begin{split}
& \int_t^T \Big\{\big[F(\sigma,t)^*-Z_1(\sigma,t)^*\big]QZ_2(\sigma,s,t)
-\Psi_1(\sigma,t)^*\Psi_2(\sigma,s,t)\Big\}\,d\sigma
\\
& \quad = - \int_t^T\Big\{\Big[\int_t^\sigma \Psi_1(q,t)^*B^*F(\sigma,q)^*\,dq\Big]
Q Z_2(\sigma,s,t)+\Psi_1(\sigma,t)^*\Psi_2(\sigma,s,t)\Big\}\,d\sigma
\\
& \quad = - \int_t^T\int_q^T \Psi_1(q,t)^*B^*F(\sigma,q)^*Q Z_2(\sigma,s,t)\,d\sigma dq
-\int_t^T \Psi_1(q,t)^*\Psi_2(q,s,t)\,dq
\\
& \quad = - \int_t^T \Psi_1(q,t)^*\Big[\big(L_t^* Q Z_2(\cdot,s,t)\big)(q)+\Psi_2(q,s,t)\Big]\,dq\,.
\end{split}
\end{equation*}

Similarly as before, we utilize $\Lambda_t:=L_t^* QL_t+I$ and the definitions of $\Psi_2$
and $Z_2$, to find
\begin{equation*}
\begin{split}
& \int_t^T \big[F(\sigma,t)^*-Z_1(\sigma,t)^*\big]QZ_2(\sigma,s,t)\,d\sigma
-\int_t^T \Psi_1(\sigma,t)^*\Psi_2(\sigma,s,t)\,d\sigma
\\
& \quad = 
- \int_t^T \Psi_1(q,t)^*\big[L_t^* Q [M(\cdot,s,t)+L_t\Psi_2(\cdot,s,t)](q)+\Psi_2(q,s,t)\big]\,dq
\\
& \quad = - \int_t^T \Psi_1(q,t)^*\big[\big(L_t^* Q M(\cdot,s,t)\big)(q)
+[\Lambda_t \Psi_2(\cdot,s,t)](q)\big]\,dq
\\
& \quad = - \int_t^T \Psi_1(q,t)^*\big[\big(L_t^* Q M(\cdot,s,t)\big)(q)
-\big(L_t^* Q M(\cdot,s,t)\big)(q)\big]\,dq\equiv 0\,.
\end{split}
\end{equation*}

\smallskip
\noindent
(iii)
Once again, we take the difference
\begin{equation*}
\begin{split}
& \int_t^T \big[Z_2(p,q,t)^*QZ_2(p,s,t)+\Psi_2(p,q,t)^*\Psi_2(p,s,t)\big]\,dp
-\int_t^T M(p,q,t)^*QZ_2(p,s,t)\,dp
\\
& \quad 
= \int_t^T \Big[\int_t^p \Psi_2(\sigma,q,t)^*B^*F(p,\sigma)^*QZ_2(p,s,t)\,d\sigma
+\Psi_2(p,q,t)^*\Psi_2(p,s,t)\Big]\,dp
\\
& \quad 
= \int_t^T \Big[\int_\sigma^T \Psi_2(\sigma,q,t)^*B^*F(p,\sigma)^*QZ_2(p,s,t)\,dp\,d\sigma
+ \int_t^T \Psi_2(p,q,t)^*\Psi_2(p,s,t)\Big]\,dp\,.
\end{split}
\end{equation*}
Recall the definition of $L_\tau^*$ and move on with the computations to find
\begin{equation*}
\begin{split}
&  \int_t^T \big[Z_2(p,q,t)^*QZ_2(p,s,t)+\Psi_2(p,q,t)^*\Psi_2(p,s,t)\big]\,dp
-\int_t^T M(p,q,t)^*QZ_2(p,s,t)\,dp
\\
& \quad 
= \int_t^T \Psi_2(\sigma,q,t)^* \big[L_t^*QZ_2(\cdot,s,t)\big](\sigma)\,d\sigma
+ \int_t^T \Psi_2(p,q,t)^*\Psi_2(p,s,t)\Big]\,dp\,
\\
& \quad 
= \int_t^T \Psi_2(\sigma,q,t)^* \Big[L_t^*Q\big[M(\cdot,s,t)+L_t\Psi_2(\cdot,s,t)\big](\sigma)
+\Psi_2(\sigma,s,t)\Big]\,d\sigma
\\
& \quad 
= \int_t^T \Psi_2(\sigma,q,t)^* \Big[\big[L_t^*QM(\cdot,s,t)\big](\sigma)
+\big[\Lambda_t\Psi_2(\cdot,s,t)\big](\sigma)\Big]\,d\sigma
\\
& \quad 
= \int_t^T \Psi_2(\sigma,q,t)^* \Big[\big[L_t^*QM(\cdot,s,t)\big](\sigma)
-\big[L_t^*QM(\cdot,s,t)\big](\sigma)\Big]\,d\sigma\equiv 0\,,
\end{split}
\end{equation*}
as expected.
(In the last two equalities, we used again $L_\tau^* Q L_\tau+I=\Lambda_t$ first and 
the representation of $\Psi_2$ in \eqref{e:def-psi2} next.)

\smallskip
\noindent
(iv) Thus, the formulae \eqref{e:riccati-ops_2} follow combining the attained identities
\eqref{e:critical} with the original representations in \eqref{e:riccati-ops}.
\end{proof}

The reformulation \eqref{e:riccati-ops_2} of the optimal cost operators allows for a derivation
of the sought-after feedback representation of the optimal control. 

% FEEDBACK FORMULA

\begin{proposition} \label{p:closedloop}
Let $(\hat{u}(\cdot,\tau;X_0),\hat{w}(\cdot,\tau;X_0))$ ($(\hat{w},\hat{u})$, in short)
be the optimal pair for the minimization problem \eqref{e:family-cauchy}-\eqref{e:family-cost},with initial state $X_0$ (as in \eqref{e:X_0}).
Then, the optimal control $\hat{u}$ admits the feedback representation
\eqref{e:feedback}, that is
\begin{equation*}
\hat{u}(t,\tau;X_0)=-B^*P_0(t)\hat{w}(t;\tau,X_0)-\int_0^t B^*P_1(t,s)\hat{y}(s)\,ds\,, 
\quad \tau\le t\le T\,,
\end{equation*}
with $\hat{y}(\cdot)$ given by \eqref{e:evolution-map}.
\end{proposition}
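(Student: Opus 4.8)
The plan is to obtain \eqref{e:feedback} by a direct substitution of the alternative representations \eqref{e:riccati-ops_2} of the cost operators—just established in the Key Lemma—into the pre-feedback formula \eqref{e:pre-feedback}. The essential analytical effort has already been spent in Lemma~\ref{l:key-lemma}; what remains is to recognize $P_0(t)$ and $P_1(t,s)$ inside \eqref{e:pre-feedback} and to legitimize two elementary operations, namely pulling the fixed operator $B^*$ outside the integrals and interchanging the order of integration.

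First I would rewrite \eqref{e:pre-feedback}, whose derivation produces integrals over $[t,T]$, in the form
\begin{equation*}
\hat{u}(t,\tau;X_0)=-\Big[\int_t^T B^*F(\sigma,t)^*QZ_1(\sigma,t)\,d\sigma\Big]\hat{w}(t,\tau;X_0)
-\int_t^T B^*F(\sigma,t)^*Q\int_0^t Z_2(\sigma,s,t)\hat{y}(s)\,ds\,d\sigma\,.
\end{equation*}
Since $B^*\in\cL(H,U)$ is a bounded operator independent of the integration variable $\sigma$, it commutes with the (Bochner) integral and may be extracted. The first summand then reads $-B^*\big[\int_t^T F(\sigma,t)^*QZ_1(\sigma,t)\,d\sigma\big]\hat{w}(t,\tau;X_0)$, and the bracket is, by \eqref{e:P_0_v2}, exactly $P_0(t)$; this yields the term $-B^*P_0(t)\hat{w}(t,\tau;X_0)$ of \eqref{e:feedback}.

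For the second summand I would invoke the Fubini--Tonelli theorem to exchange the $\sigma$- and $s$-integrations. This interchange is justified by the joint continuity and uniform boundedness of $F$, $Z_2$ and $Q$ recorded in Proposition~\ref{p:cont_P} (and in its constituent estimates for $F$, $M$, $R$, $\mu$), together with $\hat{y}(\cdot)\in L^2(0,t;H)$. After extracting $B^*$ as above, the second summand becomes $-\int_0^t B^*\big[\int_t^T F(\sigma,t)^*QZ_2(\sigma,s,t)\,d\sigma\big]\hat{y}(s)\,ds$, and the inner bracket is precisely $P_1(t,s)$ by \eqref{e:P_1_v2}. Combining the two contributions gives exactly the asserted feedback law \eqref{e:feedback}, with $\hat{y}(\cdot)$ as in \eqref{e:evolution-map}.

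The only genuine obstacle has already been surmounted in the Key Lemma, where the three nontrivial operator identities \eqref{e:critical} were established; relative to those, the present argument is essentially bookkeeping. I would expect the sole point deserving care to be the Fubini interchange in the second summand, which however reduces to the routine integrability afforded by Proposition~\ref{p:cont_P}.
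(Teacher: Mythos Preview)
Your argument is correct and matches the paper's own proof, which simply notes that \eqref{e:feedback} follows readily from \eqref{e:pre-feedback} in light of the first two identities of \eqref{e:critical} (equivalently, \eqref{e:P_0_v2}--\eqref{e:P_1_v2}). Your additional care in correcting the lower limit of integration to $t$ (consistent with the derivation of \eqref{e:pre-feedback} via \eqref{e:feedback_0}) and in justifying the extraction of $B^*$ and the Fubini interchange is appropriate and does not depart from the paper's route.
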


\begin{proof}
The validity of the feedback formula \eqref{e:feedback} now follows readily from 
the former representation \eqref{e:pre-feedback} of the optimal control,
in the light of the first two identities in \eqref{e:critical}.
\end{proof}

% RICCATI EQNs: well-posedness

\section{The coupled system of quadratic equations satisfied by the optimal cost operators.
Statement S5.~of Theorem~\ref{t:main}}
\label{s:riccati-eqns}
This section is entirely devoted to the proof of the crucial assertion {\bf S5.} of 
Theorem~\ref{t:main}, namely of the fundamental fact that the operators $P_0(t)$, $P_1(t,s)$, $P_2(t,s,q)$ -- arisen as the `building blocks' of the quadratic form defining the optimal cost \eqref{e:optimal-cost_1}, eventually shown to be given by \eqref{e:riccati-ops_2},
and with the former two of them entering the feedback formula \eqref{e:feedback} -- solve a system of three coupled partial differential equations, that is \eqref{e:DRE}.
We note that, as it is formulated, system \eqref{e:DRE} cannot be recast immediately as 
a quadratic {\em equation} in the augmented space $Y_\tau$,
since the scalar products in the second and third equations of \eqref{e:DRE} involve only elements
of $\cD(A)\subset H$ rather than also functions with values in $H$. 
This subtle issue will be addressed later in the proof of Theorem~\ref{t:big-riccati}.

\medskip
Let $P_0(t)$, $P_1(t,s)$ and $P_2(t,s,q)$ be the operators defined in \eqref{e:riccati-ops}.
The basic regularity of $P_i$, $i=0,1,2$, is stated in Proposition~\ref{p:cont_P}.
We will make use of the equivalent representations \eqref{e:riccati-ops_2} % of $P_i$, $i=0,1,2$, 
obtained in Lemma~\ref{l:key-lemma} throughout.
We additionally note that combining the definitions \eqref{e:def-Z12} of $Z_2(p,s,\tau)$ and \eqref{e:def-psi12} of $\Psi_2(\sigma,s,\tau)$, the following equivalent representation for $P_1(\tau,s)$ holds true,
\begin{equation} \label{e:P_1-bis}
P_1(\tau,s) = \int_\tau^T F(\sigma,\tau)^*Q
\big[M(\sigma,s,\tau)-[L_\tau\Lambda_\tau^{-1}L_\tau^*QM(\cdot,s,\tau)](\sigma)\,\big]\,d\sigma\,,
\end{equation}
which in turn yields, using \eqref{e:banali} in Appendix~\ref{a:appendix},
\begin{equation} \label{e:p1-tau2}
\begin{split}
P_1(\tau,\tau) &= \int_\tau^T F(\sigma,\tau)^*Q
\big[[I-L_\tau\Lambda_\tau^{-1}L_\tau^*]QM(\cdot,\tau,\tau)\big](\sigma)\,d\sigma
\\
& = -\int_\tau^T F(\sigma,\tau)^*Q
\big[[I-L_\tau\Lambda_\tau^{-1}L_\tau^*]QR(\cdot-\tau)\big](\sigma)\,d\sigma\,.
\end{split}
\end{equation}
This information will be essential later in the computations that follow. 

\smallskip
\noindent
{\em Proof of the statement S5. of Theorem~\ref{t:main}.} \hspace{2mm}
\\
{\bf i) Equation satisfied by $P_0$.}
We start from $P_0(\tau)$ as in \eqref{e:P_0_v2}:
in view of Propositions~\ref{p:3} and \ref{p:7}, the operators $F(\sigma,\tau)$ and $Z_1(\sigma,\tau)$ can be differentiated with respect to $\tau$, when acting on elements $x\in\cD(A)$.
For the sake of simplicity we neglect $x$ throughout.
Using also Lemma~\ref{l:0}, the derivative with respect to $\tau$ is computed to be initially
\begin{equation*}
\begin{split}
& %\frac{d}{d\tau} 
P_0'(\tau)=\frac{d}{d\tau} \int_\tau^T F(\sigma,\tau)^*QZ_1(\sigma,\tau)\,d\sigma
\\
& \quad = -Q + \int_\tau^T \partial_\tau F(\sigma,\tau)^*QZ_1(\sigma,\tau)\,d\sigma
+ \int_\tau^T F(\sigma,\tau)^*Q\partial_\tau Z_1(\sigma,\tau)\,d\sigma
\\
& \quad = -Q + \int_\tau^T \big[-A^*F(\sigma,\tau)^*+R(\sigma-\tau)^*\big] QZ_1(\sigma,\tau)\,d\sigma
\\
&\myspace\qquad + \int_\tau^T F(\sigma,\tau)^*Q\big[\partial_\tau F(\sigma,\tau)-F(\sigma,\tau)B\Psi_1(\tau,\tau)
\big]\,d\sigma
\\
&\myspace\myspace
+ \int_\tau^T F(\sigma,\tau)^* Q\big[ L_\tau \partial_\tau\Psi_1(\cdot,\tau)\big](\sigma)\,d\sigma
\\
&\quad %EQUALS
= -Q -A^*P_0(\tau) + \int_\tau^T R(\sigma-\tau)^*Q Z_1(\sigma,\tau)\,d\sigma
\\
&\qquad + \int_\tau^T F(\sigma,\tau)^*Q\big[-F(\sigma,\tau)A+R(\sigma-\tau)\big]\,d\sigma
\\
&\qquad
- \int_\tau^T F(\sigma,\tau)^* QF(\sigma,\tau)B\Psi_1(\tau,\tau)\,d\sigma
+ \int_\tau^T F(\sigma,\tau)^* Q\big[ L_\tau \partial_\tau\Psi_1(\cdot,\tau)\big](\sigma)\,d\sigma\,.
\end{split}
\end{equation*}
So far, we used simply the formula in \eqref{e:F-derivatives} for $\partial_\tau F(\sigma,\tau)$,
as found in Proposition~\ref{p:3}, and singled out the term $-A^*P_0(\tau)$.

It turns out to be useful to replace $F(\sigma,\tau)$, that occurs in the fourth summand (in the right hand side), with its expression following from the definition of $Z_1$ in \eqref{e:def-Z12}; 
at the same time, we use the formula \eqref{e:Psi1-derivative} for 
$\partial_\tau\Psi_1(\cdot,\tau)$ established in Proposition~\ref{p:4}
to rewrite the last summand.
Thus, we obtain
\begin{equation*}
\begin{split}
& P_0'(\tau)=
-Q -A^*P_0(\tau) + \int_\tau^T R(\sigma-\tau)^*Q Z_1(\sigma,\tau)\big]\,d\sigma
\\
& \myspace
+ \underbrace{\int_\tau^T F(\sigma,\tau)^*Q\Big[-Z_1(\sigma,\tau)A}_{-P_0(\tau)A}
+\big[L_\tau\Psi_1(\cdot,\tau)A\big](\sigma)+R(\sigma-\tau)\Big]\,d\sigma
\\
& \myspace - \int_\tau^T F(\sigma,\tau)^* QF(\sigma,\tau)B\Psi_1(\tau,\tau)\,d\sigma
\\
& \myspace 
-\int_\tau^T F(\sigma,\tau)^* Q
\big[L_\tau\Lambda_\tau^{-1}L_\tau^*QF_\tau(\cdot,\tau)\big](\sigma)\,d\sigma
\\
& \myspace
+\int_\tau^T F(\sigma,\tau)^* Q
\big[L_\tau \Lambda_\tau^{-1}L_\tau^*QF(\cdot,\tau)B\Psi_1(\tau,\tau)\big](\sigma)\,d\sigma\,.
\end{split}
\end{equation*}
We move on with the computations substituting the expression \eqref{e:def-psi1} of 
$\Psi_1(t,\tau)$ to find
% Psi_1
\begin{equation*}
\begin{split}
P_0'(\tau) &
=-Q -A^*P_0(\tau) -P_0(\tau)A +\int_\tau^T R(\sigma-\tau)^*Q Z_1(\sigma,\tau)\big]\,d\sigma
\\
& \quad
-\int_\tau^T F(\sigma,\tau)^* Q\big[L_\tau \Lambda_\tau^{-1}L_\tau^*QF(\cdot,\tau) A\big](\sigma)\,d\sigma
+\int_\tau^T F(\sigma,\tau)^* Q R(\sigma-\tau)\,d\sigma
\\
& \quad -\int_\tau^T F(\sigma,\tau)^* Q
\big[I-L_\tau \Lambda_\tau^{-1}L_\tau^*QF(\cdot,\tau)B\Psi_1(\tau,\tau)\big](\sigma)\,d\sigma)
\\
& \quad
-\int_\tau^T F(\sigma,\tau)^* Q
\big[L_\tau\Lambda_\tau^{-1}L_\tau^*Q [-F(\cdot,\tau)A+R(\cdot-\tau)]\big](\sigma)\,d\sigma\,.
\end{split}
\end{equation*}
We note first that the fifth summand in the right hand side cancels with a portion of the eighth summand, then recall that $R(\sigma-\tau) = -M(\sigma,\tau,\tau)$;
thus, we get
\begin{equation*}
\begin{split}
& P_0'(\tau)= -Q -A^*P_0(\tau) -P_0(\tau)A 
\\
& \qquad
-\int_\tau^T M(\sigma,\tau,\tau)^*Q Z_1(\sigma,\tau)\,d\sigma
-\int_\tau^T F(\sigma,\tau)^* Q M(\sigma,\tau,\tau)\,d\sigma
\\
& \qquad
+\int_\tau^T F(\sigma,\tau)^* Q
\big[I-L_\tau \Lambda_\tau^{-1}L_\tau^*QF(\cdot,\tau)\big](\sigma)
B\big[\Lambda_\tau^{-1}L_\tau^*QF(\cdot,\tau)\big](\tau)\,d\sigma
\\
& \qquad
+\int_\tau^T F(\sigma,\tau)^* Q
\big[L_\tau\Lambda_\tau^{-1}L_\tau^*Q M(\cdot,\tau,\tau)]\big](\sigma)\,d\sigma\,.
\end{split}
\end{equation*}
Recalling the representations \eqref{e:def-psi2}, \eqref{e:def-Z12} and \eqref{e:P_1_v2} for 
$\Psi_2$, $Z_2$ and $P_1$, respectively, we disclose the presence of $P_1(\tau,\tau)$ as the sum
of the fifth and seventh summands.
Therefore,
\begin{equation*}
\begin{split}
& P_0'(\tau) = -Q -A^*P_0(\tau) -P_0(\tau)A 
\\
& \qquad
-\int_\tau^T M(\sigma,\tau,\tau)^*Q 
\Big[F(\sigma,\tau)-\big[L_\tau\Lambda_\tau^{-1}L_\tau^*Q F(\cdot,\tau)\big](\sigma)\Big]\,d\sigma
-P_1(\tau,\tau)
\\
& \qquad
+\int_\tau^T F(\sigma,\tau)^* Q
\Big[\big[I-L_\tau \Lambda_\tau^{-1}L_\tau^*Q\big]F(\cdot,\tau)
\big[B\Lambda_\tau^{-1}L_\tau^*QF(\cdot,\tau)\big](\tau)\Big](\sigma)\,d\sigma
\\
& \qquad
+\int_\tau^T F(\sigma,\tau)^* Q
\big[L_\tau\Lambda_\tau^{-1}L_\tau^*Q M(\cdot,\tau,\tau)\big](\sigma)\,d\sigma\,,
\end{split}
\end{equation*}
where the right hand side embeds the representation formula for $Z_1(t,\tau)$ in \eqref{e:useful}.

Taking into account that 
\begin{equation*}
\begin{split}
&\int_\tau^T M(\sigma,\tau,\tau)^*Q \big[I-L_\tau\Lambda_\tau^{-1}L_\tau^*Q F(\cdot,\tau)\big](\sigma)\,d\sigma
\\
& \myspace
=\int_\tau^T Z_2(\sigma,\tau,\tau)^* QF(\sigma,\tau)\,d\sigma=P_1(\tau,\tau)^*\,,
\end{split}
\end{equation*}
we achieve
\begin{equation*}
\begin{split}
& P_0'(\tau)
= -Q -A^*P_0(\tau) -P_0(\tau)A-P_1(\tau,\tau)-P_1(\tau,\tau)^*
\\
& \qquad 
+\underbrace{\int_\tau^T F(\sigma,\tau)^*Q
\Big[\big[I-L_\tau\Lambda_\tau^{-1}L_\tau^*\big]F(\cdot,\tau)
B \big[\Lambda_\tau^{-1}L_\tau^*QF(\cdot,\tau)\big](\tau)\Big](\sigma)\,d\sigma}_{T(\tau)}\,.
\end{split}
\end{equation*}

In order to complete this first part of the proof of the assertion S5. of Theorem~\ref{t:main}, it remains to unveil that the term $T(\tau)$ coincides exactly with the quadratic term 
$P_0(\tau)BB^*P_0(\tau)$.
We compute
\begin{equation*}
\begin{split}
& P_0(\tau)BB^*P_0(\tau) 
\\
& \quad
=\int_\tau^T\int_\tau^T F(\sigma,\tau)^* Q \big[F(\sigma,\tau)+[L_\tau\Psi_1(\cdot,\tau)](\sigma)\big]BB^*
\big[F(q,\tau)^* Q F(q,\tau)
\\
& \myspace\myspace
+\Psi_1(q,\tau)^*[L_\tau^*QF(\cdot,\tau)](q)\big]\,dq\,d\sigma
\\
& \quad
=\int_\tau^T \int_\tau^T F(\sigma,\tau)^* Q 
\big[F(\sigma,\tau)- [L_\tau\Lambda_\tau^{-1}L_\tau^*QF(\cdot,\tau)](\sigma)\big]
BB^*F(q,\tau)^*Q \big[F(q,\tau)-
\\
& \myspace\myspace
-[L_\tau\Lambda_\tau^{-1}L_\tau^*QF(\cdot,\tau)](q)\big]\,dq\,d\sigma
\\
& \quad
= \int_\tau^T F(\sigma,\tau)^*Q \big[[I-L_\tau\Lambda_\tau^{-1}L_\tau^*Q]F(\cdot,\tau)\big]
B\Big[L_\tau^*Q \big[[I-L_\tau\Lambda_\tau^{-1}L_\tau^*Q]F(\cdot,\tau)\big](\tau)\Big](\sigma)\,d\sigma
\\
& \quad
= \int_\tau^T F(\sigma,\tau)^*Q  
\big[[I-L_\tau\Lambda_\tau^{-1}L_\tau^*Q]F(\cdot,\tau)\big]
B\Big[\big[[L_\tau^* Q(\Lambda_\tau-I)\Lambda_\tau^{-1}L_\tau^*Q]F(\cdot,\tau)\big](\tau)\Big](\sigma)\,d\sigma
\\
& \quad
= \int_\tau^T F(\sigma,\tau)^*Q  
\big[[I-L_\tau\Lambda_\tau^{-1}L_\tau^*Q]F(\cdot,\tau)\big]
B\Big[\big[\Lambda_\tau^{-1}L_\tau^*Q F(\cdot,\tau)\big](\tau)\Big](\sigma)\,d\sigma\equiv T(\tau)\,,
\end{split}
\end{equation*}
where in the last but one equality we have rewritten $L_\tau^*QL_\tau$ as $\Lambda_\tau-I$, and in the last one we have seen that 
\begin{equation*}
L_\tau^* Q -(\Lambda_\tau-I)\Lambda_\tau^{-1}L_\tau^*Q
=L_\tau^* Q -L_\tau^*Q +\Lambda_\tau^{-1}L_\tau^*Q
=\Lambda_\tau^{-1}L_\tau^*Q\,.
\end{equation*}
The argument is complete.

\medskip

\noindent
{\bf ii) Equation satisfied by $P_1$.}
Achieving the equation satisfied by the operator $P_1(\tau,s)$ is a bit trickier.
Our starting point is the representation in \eqref{e:P_1_v2}; the operators $F(\sigma,\tau)$ and $Z_2(\sigma, s,\tau)$ are differentiable with respect to $\tau$ by Propositions \ref{p:3} and \ref{p:8}.
We compute the derivative (with respect to $\tau$) of $P_1(\tau,s)$:
\begin{equation*}
\begin{split}
& \partial_\tau P_1(\tau,s)
=-F(\tau,\tau)^*QZ_2(\tau,s,\tau)+\int_\tau^T \partial_\tau F(\sigma,\tau)^*QZ_2(\sigma,s,\tau)\,d\sigma
\\
&\quad+\int_\tau^T F(\sigma,\tau)^*Q \partial_\tau Z_2(\sigma,s,\tau)\,d\sigma
\\
& \ =\int_\tau^T [-F(\sigma,\tau)A+R(\sigma-\tau)]^* Q Z_2(\sigma,s,\tau)\,d\sigma
\\
&\quad +\int_\tau^T F(\sigma,\tau)^*Q \big[\partial_\tau M(\sigma,s,\tau) 
- F(\sigma,\tau)B\Psi_2(\tau,s,\tau)
+[L_\tau\partial_\tau \Psi_2(\cdot,s,\tau)](\sigma)\big]\,d\sigma\,,
\end{split}
\end{equation*}
where we used that $Z_2(\tau,s,\tau)=0$ (see \eqref{e:Z2-derivative} of Proposition~\ref{p:8}), and the representation \eqref{e:def-Z12} of $Z_2(\sigma,s,\tau)$. 

Then, we see that
\begin{equation*}
\begin{split}
&\partial_\tau P_1(\tau,s)
=-A^*P_1(\tau,s) +\int_\tau^T R(\sigma-\tau)^* Q Z_2(\sigma,s,\tau)\,d\sigma
\\
&\ \, 
-\int_\tau^T F(\sigma,\tau)^*Q F(\sigma,\tau)K(\tau -s)\,d\sigma  
+\int_\tau^T  F(\sigma,\tau)^*QF(\sigma,\tau)B
\big[\Lambda_\tau^{-1}L_\tau^*QM(\cdot,s,\tau)\big](\tau)\,d\sigma
\\
&\ \, - \int_\tau^T F(\sigma,\tau)^*Q 
L_\tau \Big[
\big[\Lambda_\tau^{-1}L_\tau^*Q \partial_\tau M(\cdot,s,\tau)\big] 
-\Lambda_\tau^{-1}L_\tau^*QF(\cdot,\tau) B\Psi_2(\tau,s,\tau)\Big](\sigma)\,d\sigma\,;
\end{split}
\end{equation*}
namely,
\begin{equation} \label{e:alfa}
\partial_\tau P_1(\tau,s)= \sum_{i=1}^6 S_i\,,
\end{equation}
where it is immediately seen that 
\begin{subequations} \label{e:beta}
\begin{align}
& S_1= -A^*P_1(\tau,s)\,,
\\
& S_2:= \int_\tau^T R(\sigma-\tau)^* Q Z_2(\sigma,s,\tau)\,d\sigma= -P_2(\tau,s,\tau)
\end{align}
\end{subequations} 
(the latter equality follows recalling $M(\sigma,\tau,\tau)=-R(\sigma-\tau)$ from \eqref{e:banali} in the Appendix~\ref{a:appendix}).

As for the summand $S_3$, we get
\begin{equation}\label{e:gamma}
\begin{split}
S_3 &=-\int_\tau^T F(\sigma,\tau)^*Q F(\sigma,\tau)\,d\sigma\,K(\tau-s)
\\
& = -\int_\tau^T F(\sigma,\tau)^*Q \big\{Z_1(\sigma,\tau)-[L_\tau\Psi_1(\cdot,\tau)](\sigma)\big\}\,d\sigma\,K(\tau-s)
\\
& =-P_0(\tau)K(\tau-s)
+\int_\tau^T F(\sigma,\tau)^*Q 
\Big[L_\tau \big[-\Lambda_\tau^{-1}L_\tau^*QF(\cdot,\tau)\big]\Big](\sigma)\,d\sigma\,K(\tau-s)
\\
&
=: S_{31}+ S_{32}\,.
\end{split}
\end{equation}
Thus, we note that 
\begin{equation*}
\begin{split}
S_5 &:= -\int_\tau^T F(\sigma,\tau)^*Q L_\tau \big[\Lambda_\tau^{-1}L_\tau^*Q \partial_\tau M(\cdot,s,\tau)\big](\sigma)\,d\sigma
\\
& = \int_\tau^T F(\sigma,\tau)^*Q L_\tau \big[\Lambda_\tau^{-1}L_\tau^*Q 
F(\cdot,\tau)K(\tau-s)\big](\sigma)\,d\sigma
\equiv - S_{32}\,,
\end{split}
\end{equation*}
so that 
\begin{equation}\label{e:delta}
S_{32} + S_5= S_{32} - S_{32}=0\,.
\end{equation}

It remains to consider $S_4+S_6$, where
\begin{align*}
& S_4= \int_\tau^T  F(\sigma,\tau)^*QF(\sigma,\tau)B
\big[\Lambda_\tau^{-1}L_\tau^*QM(\cdot,s,\tau)\big](\tau)\,d\sigma
\\
& S_6= \int_\tau^T  F(\sigma,\tau)^*Q \big[L_\tau \Lambda_\tau^{-1}L_\tau^*QF(\cdot,\tau) B\Psi_2(\tau,s,\tau)\big](\sigma)\,d\sigma
\\
& =\int_\tau^T  F(\sigma,\tau)^*Q L_\tau \Lambda_\tau^{-1}\big[L_\tau^*QF(\cdot,\tau) B
[-\Lambda_\tau^{-1}L_\tau^*Q M(\cdot,s,\tau)](\tau) \big](\sigma)\,d\sigma
\end{align*}
so that 
\begin{equation*}
S_4+S_6
=\Big[\int_\tau^T  F(\sigma,\tau)^*Q \big[I-L_\tau \Lambda_\tau^{-1}L_\tau^*Q\big]F(\cdot,\tau)(\sigma)\,d\sigma\Big]
B [\Lambda_\tau^{-1}L_\tau^*Q M(\cdot,s,\tau)](\tau)\,.
\end{equation*}

% LEMMA tecnico intermedio

We prove now the following 

\begin{lemma} \label{l:4+6}
\begin{equation} \label{e:4+6}
S_4+S_6= P_0(\tau)BB^*P_1(\tau,s)\,.
\end{equation}
\end{lemma}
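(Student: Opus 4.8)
The plan is to take as the starting point the combined expression for $S_4+S_6$ displayed immediately above the statement, in which the two summands have already been collected into a product of a left factor (an integral over $[\tau,T]$) and a right factor $B[\Lambda_\tau^{-1}L_\tau^*QM(\cdot,s,\tau)](\tau)$. The goal is then to identify the left factor with $P_0(\tau)$ and the remaining operator $[\Lambda_\tau^{-1}L_\tau^*QM(\cdot,s,\tau)](\tau)$ with $B^*P_1(\tau,s)$, so that the whole expression collapses to the right-hand side $P_0(\tau)BB^*P_1(\tau,s)$ of \eqref{e:4+6}.

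First I would recognize the left factor. Combining the definition \eqref{e:def-Z12} of $Z_1(\cdot,\tau)$ with the representation \eqref{e:def-psi1} of $\Psi_1$, one has $Z_1(\sigma,\tau)=\big[(I-L_\tau\Lambda_\tau^{-1}L_\tau^*Q)F(\cdot,\tau)\big](\sigma)$; hence the left factor is precisely $\int_\tau^T F(\sigma,\tau)^*QZ_1(\sigma,\tau)\,d\sigma$, which equals $P_0(\tau)$ by \eqref{e:P_0_v2}. This step is immediate once the expression for $Z_1$ is substituted.

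The crux is the second identification, namely $[\Lambda_\tau^{-1}L_\tau^*QM(\cdot,s,\tau)](\tau)=B^*P_1(\tau,s)$. For this I would exploit the analogous expression $Z_2(\cdot,s,\tau)=\big[(I-L_\tau\Lambda_\tau^{-1}L_\tau^*Q)M(\cdot,s,\tau)\big]$ (from \eqref{e:def-Z12} and \eqref{e:def-psi2}) and compute $L_\tau^*QZ_2(\cdot,s,\tau)$. Writing $L_\tau^*QL_\tau=\Lambda_\tau-I$ and using the telescoping identity $L_\tau^*Q-(\Lambda_\tau-I)\Lambda_\tau^{-1}L_\tau^*Q=\Lambda_\tau^{-1}L_\tau^*Q$ — the very same algebraic manipulation already used in part (i) of the proof of S5. — yields $L_\tau^*QZ_2(\cdot,s,\tau)=\Lambda_\tau^{-1}L_\tau^*QM(\cdot,s,\tau)$ as functions on $[\tau,T]$. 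Evaluating both sides at $\tau$ and invoking the definition \eqref{e:L_tau-star} of $L_\tau^*$ together with \eqref{e:P_1_v2} then gives $[\Lambda_\tau^{-1}L_\tau^*QM(\cdot,s,\tau)](\tau)=[L_\tau^*QZ_2(\cdot,s,\tau)](\tau)=\int_\tau^T B^*F(\sigma,\tau)^*QZ_2(\sigma,s,\tau)\,d\sigma=B^*P_1(\tau,s)$.

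Substituting the two identifications back into the product produces \eqref{e:4+6}. I expect the main (and essentially only) obstacle to be the bookkeeping in the operator identity $L_\tau^*QZ_2=\Lambda_\tau^{-1}L_\tau^*QM$: one must keep careful track of which factor $\Lambda_\tau^{-1}$ acts on, and must ensure that the pointwise evaluation at the endpoint $\tau$ is legitimate — that is, that the functions involved are continuous up to $\tau$, which is guaranteed by the regularity recorded in Proposition~\ref{p:cont_P} and the continuity of $F$, $M$ and $Z_2$ established earlier.
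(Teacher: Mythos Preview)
Your proposal is correct and follows essentially the same approach as the paper: both arguments hinge on the representations $Z_1=(I-L_\tau\Lambda_\tau^{-1}L_\tau^*Q)F(\cdot,\tau)$, $Z_2=(I-L_\tau\Lambda_\tau^{-1}L_\tau^*Q)M(\cdot,s,\tau)$ and the algebraic identity $L_\tau^*Q\big(I-L_\tau\Lambda_\tau^{-1}L_\tau^*Q\big)=\Lambda_\tau^{-1}L_\tau^*Q$, together with the identification of $[L_\tau^*Q\,\cdot\,](\tau)$ as $B^*\int_\tau^T F(\sigma,\tau)^*Q\,\cdot\,d\sigma$. The only cosmetic difference is direction: the paper starts from $P_0(\tau)BB^*P_1(\tau,s)$ and reduces it to the displayed expression for $S_4+S_6$, whereas you start from $S_4+S_6$ and identify the two factors as $P_0(\tau)$ and $B^*P_1(\tau,s)$ separately.
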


\begin{proof}
To compute the term $P_0(\tau)BB^*P_1(\tau,s)$ we appeal once more to the representations \eqref{e:P_0_v2} and \eqref{e:P_1_v2} (of $P_0(\tau)$ and $P_1(\tau,s)$, respectively), this time making use of the formulas \eqref{e:useful} for $Z_1(\sigma,t)$ and $Z_2(\sigma,s,t)$, respectively.
This gives
\begin{equation} \label{e:compute0}
\begin{split}
P_0(\tau)BB^*P_1(\tau,s)&= 
\int_\tau^T F(\sigma,\tau)^*Q \big\{\big[I-L_\tau\Lambda_\tau^{-1}L_\tau^*Q\big]
F(\cdot,\tau)\big\} (\sigma)\,d\sigma \,B
\\[1mm]
& \qquad
\times B^* \int_\tau^T F(\sigma,\tau)^*Q
\big\{\big[I-L_\tau\Lambda_\tau^{-1}L_\tau^*Q\big]M(\cdot,s,\tau)\big\}(q)\,dq
\\[1mm]
& =\int_\tau^T F(\sigma,\tau)^*Q \big\{\big[I-L_\tau\Lambda_\tau^{-1}L_\tau^*Q\big]
F(\cdot,\tau)\big\} (\sigma)\,d\sigma\,B 
\\[1mm]
& \qquad 
\times L_\tau^*Q \big\{\big[I-L_\tau\Lambda_\tau^{-1}L_\tau^*Q\big]M(\cdot,s,\tau)\big\}(\tau)\,,
\end{split}
\end{equation}
where the last equality is justified by the identification
\begin{equation*}
\begin{split}
& B^* \int_\tau^T F(\sigma,\tau)^*Q
\big\{\big[I-L_\tau\Lambda_\tau^{-1}L_\tau^*Q\big]M(\cdot,s,\tau)\big\}(q)\,dq
\\
& \qquad 
\equiv L_\tau^*Q
\big\{\big[I-L_\tau\Lambda_\tau^{-1}L_\tau^*Q\big]M(\cdot,s,\tau)\big\}(\tau)\,,
\end{split}
\end{equation*}
on the basis of the definition \eqref{e:L_tau-star} of $L_\tau^*$.

A key observation now is that the operator $L_\tau^*Q [I-L_\tau\Lambda_\tau^{-1}L_\tau^*Q]$
can be replaced by $\Lambda_\tau^{-1}L_\tau^*Q$: indeed, 
\begin{equation} \label{e:identity}
\begin{split}
L_\tau^*Q \big[I-L_\tau\Lambda_\tau^{-1}L_\tau^*Q\big]
&=L_\tau^*Q-[L_\tau^*QL_\tau] \Lambda_\tau^{-1}L_\tau^*Q
=L_\tau^*Q-[\Lambda_\tau-I]\Lambda_\tau^{-1}L_\tau^*Q
\\[1mm]
&
= \underbrace{L_\tau^*Q-L_\tau^*Q}_{\equiv 0}+\Lambda_\tau^{-1}L_\tau^*Q
=\Lambda_\tau^{-1}L_\tau^*Q\,,
\end{split}
\end{equation}
which inserted in \eqref{e:compute0} shows
\begin{equation*}
\begin{split}
&P_0(\tau)BB^*P_1(\tau,s)
=
\int_\tau^T F(\sigma,\tau)^*Q \big\{\big[I-L_\tau\Lambda_\tau^{-1}L_\tau^*Q\big]
F(\cdot,\tau)\big\} (\sigma)\,d\sigma\,B
\\
& \myspace\myspace 
\times\big[\Lambda_\tau^{-1}L_\tau^*Q M(\cdot,s,\tau)\big](\tau)\,. 
\end{split}
\end{equation*}
But this is nothing but \eqref{e:4+6}, which ends the proof of the lemma.

\end{proof}

Now we are ready to return to \eqref{e:alfa}, taking into account \eqref{e:beta}, \eqref{e:gamma}, 
\eqref{e:delta} and \eqref{e:4+6} of Lemma~\ref{l:4+6}, to achieve the desired conclusion that
$P_1(\tau,s)$ satisfies the equation
\begin{equation*}
\partial_\tau P_1(\tau,s)=-A^*P_1(\tau,s)-P_2(\tau,s,\tau)-P_0(\tau)K(\tau-s)+P_0(\tau)BB^*P_1(\tau,s)\,,
\end{equation*}
just like in \eqref{e:DRE}. 
Then, when acting on elements of $D(A)$, 
\begin{equation}\label{e:P_1*}
\partial_\tau P_1(\tau,q)^* =-P_1(\tau,q)^*A-P_2(\tau,\tau,q)- K(\tau-q)P_0(\tau)+P_1(\tau,q)^*BB^*P_0(\tau)
\end{equation}
as a consequence, 
since $P_0(\tau)$ and $P_2(t,s,q)$ are self-adjoint (see \eqref{e:P_i_pos_self}) and we have $P_2(t,q,s)=P_2(t,s,q)$.

\medskip

\noindent
{\bf iii) Equation satisfied by $P_2$.}
On the basis of the representation \eqref{e:P_2_v2} of $P_2(\tau,s,q)$, 
we recall that in view of Propositions \ref{p:5} and \ref{p:8} the operators $M(p,q,\tau)$ and $Z_2(p,s,\tau)$ are differentiable with respect to $\tau$ (still when acting on elements of $\cD(A)$). We begin the computation of the partial derivative $\partial_\tau P_2(\tau,s,q)$,
obtaining first
\begin{equation*}
\begin{split}
& \partial_\tau P_2(\tau,s,q)
=\cancel{-M(\tau,q,\tau)^*QZ_2(\tau,s,\tau)}
+\int_\tau^T \partial_\tau M(p,q,\tau)^*QZ_2(p,s,\tau)\,dp
\\
&\myspace +\int_\tau^T M(p,q,\tau)^*Q\partial_\tau Z_2(p,s,\tau)\,dp\,,
\end{split}
\end{equation*}
where the first summand cancels, as $M(\tau,q,\tau)=0$ (see Lemma~\ref{l:0}).
Then we appeal to Proposition~\ref{p:5} to find $\partial_\tau M(p,q,\tau)=-F(p,\tau)K(\tau-q)$, 
and to Proposition~\ref{p:8} to compute 
\begin{equation*}
\begin{split}
& \partial_\tau Z_2(p,s,\tau)=\partial_\tau M(p,s,\tau)x-F(p,\tau)B\Psi_2(\tau,s,\tau)x+\big[L_\tau\partial_\tau \Psi_2(\cdot,s,\tau)x\big](p)
\\
& \qquad 
= \partial_\tau M(p,s,\tau)-F(p,\tau)B\Psi_2(\tau,s,\tau)+\int_\tau^p F(p,\sigma)B\Psi_2(\sigma,s,\tau)\,d\sigma
\\
& \qquad 
= -F(p,\tau)K(\tau-s)-F(p,\tau)B\Psi_2(\tau,s,\tau)
\\
& \myspace
+\int_\tau^p F(p,\sigma)B\big[\Lambda_\tau^{-1}L_\tau^*QF(\cdot,\tau)K(\tau-s)\big](\sigma)\,d\sigma
\\
& \myspace +\int_\tau^p F(p,\sigma)B\big[\Lambda_\tau^{-1}L_\tau^*QF(\cdot,\tau)B\Psi_2(\tau,s,\tau)\big](\sigma)\,d\sigma\,.
\end{split}
\end{equation*}
Then
\begin{equation*}
\begin{split}
& \partial_\tau P_2(\tau,s,q)
= -\int_\tau^T K(\tau-q) F(p,\tau)^* Q M(p,s,\tau)\,dp
\\[1mm]
& \quad 
-\int_\tau^T K(\tau-q) F(p,\tau)^* Q[L_\tau\Psi_2(\cdot,s,\tau)](p)\,dp
\\[1mm]
& \quad
-\int_\tau^T M(p,q,\tau)^*QF(p,\tau)K(\tau-s)\,dp
- \int_\tau^T M(p,q,\tau)^*QF(p,\tau)B\Psi_2(\tau,s,\tau)\,dp
\\[1mm]
& \quad 
+ \int_\tau^T M(p,q,\tau)^*Q\int_\tau^p F(p,\sigma)B
\big[\Lambda_\tau^{-1}L_\tau^*QF(\cdot,\tau)K(\tau-s)\big](\sigma)\,d\sigma\,dp
\\[1mm]
& \quad 
+  \int_\tau^T M(p,q,\tau)^*Q\int_\tau^p F(p,\sigma)B
\big[\Lambda_\tau^{-1}L_\tau^*QF(\cdot,\tau)B\Psi_2(\tau,s,\tau)\big](\sigma)\,d\sigma\,dp
\\[1mm]
& \ 
= \sum_{i=1}^6 T_i\,.
\end{split}
\end{equation*}
First we observe that
\begin{equation}\label{e:one}
\begin{split}
T_1+T_2 &= -K(\tau-q)\Big[\int_\tau^T F(p,\tau)^* Q 
\big[M(p,s,\tau)+[L_\tau\Psi_2(\cdot,s,\tau)](p)\big]\,dp\Big]
\\
&=-K(\tau-q)P_1(\tau,s)\,;
\end{split}
\end{equation}
then, we compute
\begin{equation*}
\begin{split}
&T_3+T_5 =-\int_\tau^T M(p,q,\tau)^* Q F(p,\tau)\,dp\,K(\tau-s)
\\
& \myspace +\int_\tau^T M(p,q,\tau)^*Q
[L_\tau\Lambda_\tau^{-1} L_\tau^* QF(\cdot,\tau)](p)\,dp\,K(\tau-s)
\\[1mm]
& \quad 
= -\Big[\int_\tau^T M(p,q,\tau)^* Q F(p,\tau)\,dp 
-\int_\tau^T \Psi_2(p,q,\tau)^*[L_\tau^*QF(\cdot,\tau)](p)\,dp\Big]\,K(\tau-s)
\\[1mm]
&\quad 
=-\int_\tau^T Z_2(p,q,\tau)^* Q F(p,\tau)\,dp\,K(\tau-s)
\end{split}
\end{equation*}
(where in the last two equalities we used the second and fourth of the \eqref{e:4-adjoints}, respectively), to find
\begin{equation} \label{e:two}
T_3+T_5=-P_1(\tau,q)^*K(\tau-s)\,.
\end{equation}

It remains to pinpoint $T_4+T_6$. 
We have
\begin{equation*}
\begin{split}
&T_4+T_6 :=- \int_\tau^T M(p,q,\tau)^*QF(p,\tau)B\Psi_2(\tau,s,\tau)\,dp
\\
& \quad +\int_\tau^T M(p,q,\tau)^*Q\big[L_\tau
\Lambda_\tau^{-1}L_\tau^*QF(\cdot,\tau)B\Psi_2(\tau,s,\tau)\big](p)\,dp
\\[1mm]
& \; = \int_\tau^T M(p,q,\tau)^*Q\big[[I - L_\tau\Lambda_\tau^{-1}L_\tau^*Q]F(\cdot,\tau)B\Psi_2(\tau,s,\tau)\big](p)\,dp
\\
& \; = -\int_\tau^T M(p,q,\tau)^*Q\big[[I - L_\tau\Lambda_\tau^{-1}L_\tau^*Q]F(\cdot,\tau)B
[\Lambda_\tau^{-1}L_\tau^*QM(\cdot,s,\tau)](\tau)\big](p)\,dp
\\[1mm]
& \; = \int_\tau^T  M(p,q,\tau)^*Q\Big[\big[I - L_\tau \Lambda_\tau^{-1}L_\tau^*Q\big]
F(\cdot,\tau)B\big[ L_\tau^*Q[I-L_\tau \Lambda_\tau^{-1}L_\tau^*Q]M(\cdot,s,\tau)\big](\tau)\Big](p)dp,
\end{split}
\end{equation*}
where we used once again
% IDENTITIES
$\Psi_2(\tau,s,\tau) = -\big[\Lambda_\tau^{-1}L_\tau^*QM(\cdot,s,\tau)\big](\tau)$ first,
while in the last equality we replaced $\Lambda_\tau^{-1}L_\tau^*Q$ by $L_\tau^*Q[I-L_\tau \Lambda_\tau^{-1}L_\tau^*Q]$ on the strenght of \eqref{e:identity}.

We insert the explicit meaning of the operator $L_\tau^*$ and carry on with the computations,
to find
\begin{equation} \label{e:three}
\begin{split}
&T_4+T_6 :=\int_\tau^T M(p,q,\tau)^*Q \big[[I-L_\tau \Lambda_\tau^{-1}
L_\tau^*Q\big]F(\cdot,\tau)(p)\big]\,dp
\\
& \myspace
\times BB^*\int_\tau^T F(\sigma,\tau)^*Q\big[[I-L_\tau \Lambda_\tau^{-1}L_\tau^*Q]M(\cdot,s,\tau)\big]
(\sigma)\,d\sigma
\\[1mm]
& \qquad 
=\int_\tau^T M(p,q,\tau)^*Q F(p,\tau)\,dp
- \int_\tau^T M(p,q,\tau)^*Q \big[L_\tau \Lambda_\tau^{-1}L_\tau^*QF(\cdot,\tau)\big](p)\,dp
\\
& \myspace
\times BB^*\int_\tau^T F(\sigma,\tau)^*Q
\big[M(\sigma,s,\tau)+[L_\tau \Psi_2(\cdot,s,\tau)](\sigma)\big]\,d\sigma
\\[1mm]
& \qquad 
=\int_\tau^T Z_2(p,q,\tau)^*Q F(p,\tau)\,dp\,B
B^*\int_\tau^T F(\sigma,\tau)^*QZ_2(\sigma,s,\tau)(\sigma)\,d\sigma
\\[1mm]
& \qquad 
= P_1(\tau,q)^*BB^*P_1(\tau,s)\,.
\end{split}
\end{equation}

Combining \eqref{e:three} with \eqref{e:one} and \eqref{e:two} we finally attain
\begin{equation*}
\partial_\tau P_2(\tau,s,q)
=-K(\tau-q)P_1(\tau,s)-P_1(\tau,s)^*K(\tau-s) +P_1(\tau,q)^*BB^*P_1(\tau,s)\,, 
\end{equation*}
as desired.
\qed

% SECTION: THE DRE IS WELL-POSED

\section{Proof of Theorem~\ref{t:big-riccati}} \label{s:uniqueness}
\paragraph{\bf 1.~(Existence)}
To infer the {\em existence} of (at least) an operator solution to the Riccati-type equation \eqref{e:big-riccati}, it is natural to claim that it is the matrix operator $P(t)$ defined in \eqref{e:big-op} to solve \eqref{e:big-riccati} (in some dense subspace of $H\times L^2(0,t;H)$),
in the first place. 
This is indeed true.
However, there is a subtle analytical gap that needs to be dealt with, beforehand.
The task is fulfilled proving the following result in the first place.

\begin{proposition} \label{p:DRE_equiv} 
A triple $P_i$ ($i=0,1,2$) is a solution to the coupled system \eqref{e:DRE}
if and only if it solves the following one:
\begin{equation} \label{e:DREbis}
\begin{cases}
&\frac{d}{dt}\langle P_0(t)x,y\rangle_H +\langle P_0(t)x,Ay\rangle_H 
+ \langle Ax,P_0(t)y\rangle_H + \langle Qx,y\rangle_H  
\\[1mm] 
&\qquad\quad - \langle B^*P_0(t)x,B^*P_0(t)y\rangle_U 
+ \langle P_1(t,t)x,y\rangle_H +\langle x,P_1(t,t)y\rangle_H = 0 
\\[2mm]
&\frac{\partial}{\partial t} \langle P_1(t,\cdot)f(\cdot),y\rangle_{L^2(0,t;H)}
+ \langle P_1(t,\cdot)f(\cdot),Ay\rangle_{L^2(0,t;H)} 
\\[1mm]
& \qquad 
+\langle K(t-\cdot)f(\cdot),P_0(t)y\rangle_{L^2(0,t;H)} 
+ \langle P_2(t,\cdot,t)f(\cdot),y\rangle_{L^2(0,t;H)}
\\[1mm]  
&\qquad
- \langle B^*P_1(t,\cdot)f(\cdot),B^*P_0(t)y\rangle_{L^2(0,t;U)}=0
\\[2mm]
& \frac{\partial}{\partial t}\langle P_2(t,\cdot,:)f(\cdot),g(:)\rangle_{L^2((0,t)^2;H)} 
+\langle P_1(t,\cdot)f(\cdot),K(t-:)g(:)\rangle_{L^2((0,t)^2;H)}
\\[1mm]
& \qquad
+\langle K(t-\cdot)f(\cdot),P_1(t,:)g(:)\rangle_{L^2((0,t)^2;H)}
\\[1mm]
&\qquad  -\langle B^*P_1(t,\cdot)f(\cdot),B^*P_1(t,:)g(:)\rangle_{L^2((0,t)^2;U)}=0
\end{cases}
\end{equation}
(for all $t\in [0,T]$, $y\in \cD(A)$ and $f,g\in L^2(0,t;\cD(A))$). 
\end{proposition}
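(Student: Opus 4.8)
The crux of the statement is the meaning of the symbols $\frac{\partial}{\partial t}$ occurring in the second and third lines of \eqref{e:DREbis}, where they act on inner products taken over the $t$-dependent spaces $L^2(0,t;H)$ and $L^2((0,t)^2;H)$. The plan is to read these as differentiation under the integral sign with the domain of integration frozen, i.e. to set, for $y\in\cD(A)$ and $f\in L^2(0,t;\cD(A))$,
\begin{equation*}
\frac{\partial}{\partial t}\langle P_1(t,\cdot)f(\cdot),y\rangle_{L^2(0,t;H)}:=\int_0^t \partial_t\langle P_1(t,s)f(s),y\rangle_H\,ds,
\end{equation*}
and likewise for the $P_2$-term. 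This is the only reading under which the two systems can match: the genuine derivative of $t\mapsto\langle P_1(t,\cdot)f,y\rangle_{L^2(0,t;H)}$ would carry the extra boundary contribution $\langle P_1(t,t)f(t),y\rangle_H$, which is in any case meaningless for a merely $L^2$-in-time $f$. (Those boundary contributions are precisely what is later encoded in the Dirac-type operators $\cD_{1,t},\cD_{2,t}$ when \eqref{e:DREbis} is assembled into the single equation \eqref{e:big-riccati}.) Since the first lines of \eqref{e:DRE} and \eqref{e:DREbis} are identical, only the second and third equations require attention.

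For the implication \eqref{e:DRE}$\Rightarrow$\eqref{e:DREbis} I fix $y\in\cD(A)$ and $f,g\in L^2(0,t;\cD(A))$, insert $x=f(s)$ in the second equation of \eqref{e:DRE} and $x=f(s),\,y=g(q)$ in the third (licit, as $f(s),g(q)\in\cD(A)$ for a.e.\ $s,q$), and integrate in $s$ over $(0,t)$, respectively in $(s,q)$ over $(0,t)^2$. The undifferentiated summands reproduce verbatim the corresponding $L^2$-inner products of \eqref{e:DREbis}, while the leading summands reproduce the $\frac{\partial}{\partial t}$-terms by the convention just fixed. The only genuine point is to justify carrying the $s$-integration inside $\partial_t$; this is a differentiation-under-the-integral argument resting on the boundedness and joint continuity of $P_i$ and of their $t$-derivatives on $\cD(A)$, as furnished by Proposition~\ref{p:cont_P} together with the differentiability statements of Section~\ref{s:riccati-eqns} (Propositions~\ref{p:3}, \ref{p:5}, \ref{p:8}) and the Appendix.

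For the converse \eqref{e:DREbis}$\Rightarrow$\eqref{e:DRE} I localize by the fundamental lemma of the calculus of variations. Choosing $f=\chi_E\,v$ with $v\in\cD(A)$ and $E\subset(0,t)$ an arbitrary measurable set, the second equation of \eqref{e:DREbis} collapses to $\int_E \Upsilon_{v,y}(t,s)\,ds=0$, where $\Upsilon_{v,y}(t,s)$ is exactly the left-hand side of the second equation of \eqref{e:DRE} evaluated at $x=v$; as $E$ is arbitrary, $\Upsilon_{v,y}(t,s)=0$ for a.e.\ $s$, and the joint continuity in $s$ (Proposition~\ref{p:cont_P}) upgrades this to every $s$. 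Letting $v,y$ range over a countable dense subset of $\cD(A)$ recovers the pointwise second equation. The third equation is treated identically, now with $f=\chi_E v$ and $g=\chi_{E'}w$ to localize independently in $s$ and $q$, yielding the pointwise identity for a.e.\ $(s,q)$ and then, by continuity, for all $(s,q)$.

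The step I expect to be most delicate is not either implication in isolation but the bookkeeping of the moving domain: one must be sure that the partial-derivative convention above is the correct one, so that no spurious boundary terms survive in \eqref{e:DREbis}, and that differentiation under the integral sign is legitimate termwise. Both reduce to the uniform continuity and differentiability estimates for $P_0,P_1,P_2$ and for the auxiliary operators $F,M,Z_i,\Psi_i$ collected in Proposition~\ref{p:cont_P} and in the Appendix, which is why those results are invoked repeatedly.
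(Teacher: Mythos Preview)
Your proposal rests on reading the symbol $\frac{\partial}{\partial t}$ in \eqref{e:DREbis} as differentiation with the domain $(0,t)$ frozen. The paper does \emph{not} read it that way: in its proof it writes
\[
\int_0^t \tfrac{\partial}{\partial t}\langle P_1(t,s)f(s),y\rangle_H\,ds
=\tfrac{d}{dt}\langle P_1(t,\cdot)f(\cdot),y\rangle_{L^2(0,t;H)},
\]
i.e.\ the left-hand side of the second equation of \eqref{e:DREbis} is the \emph{genuine} $t$-derivative of $t\mapsto\int_0^t\langle P_1(t,s)f(s),y\rangle_H\,ds$. Your objection that this would produce an ill-defined boundary term $\langle P_1(t,t)f(t),y\rangle_H$ for $f\in L^2$ is exactly what the paper's argument is designed to circumvent, and is the substantive content of the direction \eqref{e:DRE}$\Rightarrow$\eqref{e:DREbis}. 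The paper first takes $f\in C^0_0((0,T),\cD(A))$ with $f\equiv 0$ on $[t,T]$, so that $f(t)=0$ and the boundary contribution vanishes identically; then it approximates an arbitrary $f\in L^2(0,t;\cD(A))$ by such $f_n$, observes that both $\langle P_1(t,\cdot)f_n,y\rangle_{L^2(0,t;H)}$ and its $t$-derivative converge \emph{uniformly in $t$} (the latter via the already-integrated right-hand side of \eqref{e:DREbis}), and concludes that the limit is $C^1$ with the asserted derivative. Under the paper's reading, your ``integrate and invoke the convention'' step for this direction is incomplete: it establishes the frozen-domain identity but not the differentiability of the moving-domain functional. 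Relatedly, the Dirac operators $\cD_{1,t},\cD_{2,t}$ in \eqref{e:big-riccati} encode the \emph{explicit} terms $P_1(t,t)$ and $P_2(t,\cdot,t)$ already present in \eqref{e:DRE}/\eqref{e:DREbis}, not the moving-boundary contributions you have in mind.

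For the converse \eqref{e:DREbis}$\Rightarrow$\eqref{e:DRE} your localization idea is close to the paper's, which takes the specific step function $f=x\,\chi_{[0,\tau]}$ with $\tau<t$ (so the $L^2(0,t)$-integrals reduce to $\int_0^\tau$ and, since $\tau<t$, the genuine derivative \emph{does} coincide with differentiation under the integral) and then differentiates the resulting identity in $\tau$ to recover the pointwise equation; the third equation is handled with $f=x\,\chi_{[0,\tau]}$, $g=y\,\chi_{[0,\sigma]}$ and differentiation in $\tau$ and $\sigma$. Your $\chi_E v$ variant works too under either reading, provided $E\subset(0,\tau)$ with $\tau<t$ so that the moving boundary plays no role.
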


\begin{proof} 
Assune that a triple $P_i$ ($i=0,1,2$) solves the coupled system \eqref{e:DREbis} 
(for all $t\in (0,T]$, $y\in H$ and $f,g\in L^2(0,t;H)$). 
Given $\tau\in (0,t)$ and $x\in \cD(A)$, we choose 
\begin{equation*}
f(s)=
\begin{cases} x & \text{in $[0,\tau]$}
\\
0 & \text{in $(\tau,t]$}
\end{cases}
\end{equation*} 
in the second of the equations in \eqref{e:DREbis};
the scalar products bring about integrals that are performed on the interval $[0,\tau]$,
as a consequence.
If we write these explicitly, we see that the partial derivative $\partial_t$ can be moved
inside any integral.
Then, differentiating with respect to $\tau$, we obtain
\begin{equation*}
\begin{split}
&\frac{\partial}{\partial t} \langle P_1(t,\tau)x,y\rangle_H  + \langle P_1(t,\tau)x,Ay\rangle_H +\langle K(t-\tau)x,P_0(t)y\rangle_H
\\[1mm]
& \qquad 
+ \langle P_2(t,\tau,t)x,y\rangle_H - \langle B^*P_1(t,\tau)x,B^*P_0(t)y\rangle_U=0\,,
\end{split}
\end{equation*}
which is nothing but the second equation of \eqref{e:DRE}, as $\tau<t$ was (given, and yet) arbitrary.

Similarly, for any given $\tau, \sigma \in (0,t)$ and $x,y\in \cD(A)$, we choose $f$ as above
and set
\begin{equation*}
g(s)=\begin{cases} y & \text{in $[0,\sigma]$}
\\
0 & \text{in $(\sigma,t]$.}
\end{cases}
\end{equation*}
We insert these functions in the third equation of \eqref{e:DREbis} and
write explicitly the integrals; once again the derivative $\partial_t$ can be moved inside the integrals.
Differentiate with respect to $\tau$ first, and next with respect to $\sigma$, to find
\begin{equation*}
\begin{split}
&\frac{\partial}{\partial t}\langle P_2(t,\tau,\sigma)x,y\rangle_H 
+\langle P_1(t,\tau)x,K(t-\sigma)y\rangle_H
\\[1mm]
& \qquad
+\langle K(t-\tau)x,P_1(t,\sigma)y\rangle_H
 -\langle B^*P_1(t,\tau)x,B^*P_1(t,\sigma)y\rangle_U=0\,,
\end{split}
\end{equation*}
which is just the third equation in \eqref{e:DRE} (since both $\tau, \sigma$ are arbitrary).

\medskip
Suppose now, conversely, that a triple $P_i$ ($i=0,1,2$) solves the coupled system \eqref{e:DRE}
(for any $t\in [0,T]$ and $x,y\in \cD(A)$). 
With focus on the second equation, set $x=f(s)$ where $f\in C^0_0((0,T),\cD(A))$ and 
$f\equiv 0$ in $[t,T]$, and integrate over $[0,t]$. 
Then,
\begin{equation*}
\begin{split}
\int_0^t \frac{\partial}{\partial t} \langle P_1(t,s)f(s), y\rangle_H \,ds 
&= \frac{d}{dt}\left\langle \int_0^t P_1(t,s)f(s)\,ds,y\right\rangle_H
\\[1mm]
& =\frac{d}{dt}\langle P_1(t,\cdot)f(\cdot), y\rangle_{L^2(0,t;H)}\,,
\end{split}
\end{equation*}
where in the first equality we used that $f(t)=0$.
This means that the second equation in \eqref{e:DREbis} is valid for any 
$f\in C^0_0((0,t),\cD(A))$ and $y\in H$. 

\smallskip
If now $f\in L^2(0,t;\cD(A))$, we extend it on the entire interval $[0,T]$ by setting it to $0$
on $(t,T]$. 
Select a sequence $\{f_n\} \subset C^0_0((0,t),D(A))$ such that $f_n$ converges to $f$ in 
$L^2(0,t;\cD(A))$.
The second equation in \eqref{e:DREbis} is satisfied for any $f_n$; letting $n\to \infty$, all the terms on the right hand side converge to the corresponding ones, with $f$ in place of $f_n$.
In addition, it is verified readily that the convergence is uniform with respect to $t\in [0,T]$.
Thus, in the first hand side of the equation we also have 
\begin{equation}\label{e:der_conv}
\begin{split}
& \lim_{n\to \infty} \frac{d}{dt}\langle P_1(t,\cdot)f_n(\cdot), y\rangle_{L^2(0,t;H)} = -\langle P_1(t,\cdot)f(\cdot), Ay\rangle_{L^2(0,t;H)} 
\\[1mm]
& \qquad 
-\langle K(t-\cdot)f(\cdot),P_0(t)y\rangle_{L^2(0,t;H)} -\langle  P_2(t,s,t)f(s), y\rangle_{L^2(0,t;H)} 
\\[1mm]
&\qquad  
+\langle B^*P_1(t,\cdot)f(\cdot),B^*P_0(t)y\rangle_{L^2(0,t;U)}\,,
\end{split}
\end{equation}
uniformly with respect to $t\in [0,T]$.
Similarly,
\begin{equation}\label{e:fun_conv}
\lim_{n\to \infty}\langle P_1(t,\cdot)f_n(\cdot),y\rangle_{L^2(0,t;H)} 
= \langle P_1(t,\cdot)f(\cdot),y\rangle_{L^2(0,t;H)}  
\end{equation}
uniformly with respect to $t\in [0,T]$.

In view of \eqref{e:fun_conv} and \eqref{e:der_conv}, we obtain that 
$t \longmapsto\langle P_1(t,\cdot)f_n(\cdot),y\rangle_{L^2(0,t;H)}$ is differentiable 
and that the second equation in \eqref{e:DREbis} is satisfied (for any $f\in L^2(0,t;\cD(A))$ 
and any $y\in H$).

A similar argument proves that the third equation in \eqref{e:DREbis} is satisfied as well.
Just set $x=f(s)$, $y=g(q)$, with $f,g\in C^0_0((0,T),\cD(A))$ and $f\equiv g\equiv 0$
in $[t,T]$, and proceed ({\em mutatis mutandis}) as before.
This concludes the proof of the proposition.
\end{proof}

% RIPRESA 

In light of Proposition~\ref{p:DRE_equiv}, the existence part of the proof of Theorem~\ref{t:big-riccati} now demands only a rewriting of the coupled system of four equations satisfied by the operators $P_0$, $P_1$, $P_1^*$, $P_2$ -- to wit, system \eqref{e:DREbis} complemented with the equation satisfied by $P_1^*$ following \eqref{e:P_1*} -- as the unique matrix equation 
\eqref{e:big-riccati}.
As such, it is omitted.

\smallskip

\paragraph{\bf 2.~(Uniqueness)}
In order to show uniqueness, it is necessary to make some preliminary remarks.
The operator $P(\tau)$ defined by \eqref{e:big-op} belongs to the space $\cL(Y_\tau)$ for
any $\tau\in (0,T]$, and hence $P(\cdot)$ belongs to the Banach space
\begin{equation*}
Z:=\big\{U(\cdot)\colon U(\tau)\in \cL(Y_\tau), \; \tau\in (0,T]\big\}, 
\quad \|U\|_Z= \sup_{\tau\in (0,T]} \|U(\tau)\|_{\cL(Y_\tau)}<\infty\,.
\end{equation*}
Of course, for $\tau<t$ we have $Y_t\subseteq Y_\tau$. 
On the other hand, given $g\in L^2(0,\tau;H)$, set 
\begin{equation*}
G(t) := \begin{cases}g(t) & \text{in $[0,\tau]$}
\\
0 & \text{in $(\tau,t]$;}
\end{cases}
\end{equation*}
then, for $\tau<t$ the map $(y,g)\longmapsto (y,G)$ from $Y_\tau$ to the set 
\begin{equation*}
E_{\tau,t}=\{(y,h)\in Y_t\colon \; y\in H, \; h(\cdot)\equiv 0 \;\, \text{in $(\tau,t]$}\}
\end{equation*}
is an isometry. 
We may therefore identify $Y_\tau$ with $E_{\tau,t}$. 
Hence, if $U\in Z$ and $(y,g)\in Y_\tau$ we may define $\|U(t)(y,g)\|_{Y_\tau}=\|U(t)(y,G)\|_{Y_t}$, and we have
\begin{equation*}
\|U(t)(y,g)\|_{Y_\tau}=\|U(t)(y,G)\|_{Y_t}\le \|U(t)\|_{\cL(Y_t)}\|(y,G)\|_{Y_t} = \|U(t)\|_{\cL(Y_t)}\|(y,G)\|_{Y_\tau}; 
\end{equation*}
in other words, we may say that $U(t)\in \cL(Y_\tau)$ for every $\tau<t$, with 
\begin{equation}\label{e:stimeY_t} 
\|U(t)\|_{\cL(Y_\tau)}\le \|U(t)\|_{\cL(Y_t)}, \qquad 0<\tau\le t\le T\,.
\end{equation}  

\smallskip 
We proceed by contradiction: let $U(\cdot)\in Z$ be another solution of \eqref{e:big-riccati}
(besides $P(\tau)$ defined by \eqref{e:big-op}), and set $V:=P-U$. 
Of course, we have as well
\begin{equation*}
U(\tau)=
\begin{pmatrix}
U_0(\tau) & U_1(\tau,\cdot)
\\
U_1(\tau,:)^* & U_2(\tau,\cdot,:)
\end{pmatrix}, 
\qquad
V(\tau)=
\begin{pmatrix}
V_0(\tau) & V_1(\tau,\cdot)
\\
V_1(\tau,:)^* & V_2(\tau,\cdot,:)
\end{pmatrix}, 
\quad \tau\in (0,T).
\end{equation*}
For given $\tau, t\in (0,T)$, with $\tau<t$, the difference operator $V(\cdot)$ satisfies in $[t,T)$ the differential equation
\begin{equation}\label{e:DRE_uni} 
\begin{split}
& \frac{d}{dr} V(r)+V(r)(\cA+\cK_1(r)+\cD_{1,r})+(\cA^*+\cK_2(r)+\cD_{2,r})V(r)
\\
& \myspace \myspace
- V(r)\cB\cB^*P(r) - U(r)\cB\cB^*V(r)=0,
\end{split}
\end{equation}
with $V(T)=P(T)-U(T)=0$.
We note that the operator $\cA$ is the infinitesimal generator of a $C_0$-semigroup 
$\{e^{s\cA}\}_{s\ge 0}$ in $Y_\tau$ (for any $\tau\in (0,T)$), explicity given by
\begin{equation*}
e^{s\cA} = 
\begin{pmatrix}
e^{sA} & 0
\\
0 & I
\end{pmatrix};
\end{equation*}
in addition, the estimate $\|e^{s\cA}\|_{\cL(Y_\tau)} \le C\,e^{\omega s}\wedge 1$, $s\ge 0$, holds true.
In particular, if $\omega\ge 0$, then $\|e^{s\cA}\|_{\cL(Y_\tau)} \le C\,e^{\omega s}$
(the computations below are similar in the case $\omega<0$).

The mild form of \eqref{e:DRE_uni} in $[t,T]$ is
\begin{equation*} 
\begin{split}
& V(r) = \int_r^T e^{(p-r)\cA^*}\Big[V(p)[\cK_1(p)+\cD_{1,p}]+[\cK_2(p)+\cD_{2,p}]V(p)
\\
& \myspace \myspace
-V(p)\cB\cB^*P(p) - U(p)\cB\cB^*V(p)\Big]e^{(p-r)\cA}\,dp\,.
\end{split}
\end{equation*}

It can be shown that for $r\ge t$ the map $r \longmapsto \|V(r)\|_{\cL(Y_t)}$ is lower semi-continuous; hence, it is a measurable function in $[t,T]$ (the proof is postponed; see Lemma~\ref{l:lsc} at the end of this section).
Therefore, we are allowed to estimate $\|V(r)\|_{\cL(Y_t)}$ for any given $r\in [t,T]$, to find 
\begin{equation*}
\begin{split} 
\|V(r)\|_{\cL(Y_t)} 
&\le \int_t^T C^2 e^{2\omega (p-r)}
\Big[ 2\|K\|_{L^2(0,T)}\|V(p)\|_{\cL(Y_t)} + 2\|V(p)\|_{\cL(Y_t)} 
\\
& \qquad + \big(\|P\|_Z+\|U\|_Z\big)\|BB^*\|_{\cL(H)} \|V(p)\|_{\cL(Y_t)}\Big]\,dp
\\
& \le C_1 \int_r^T  \|V(p)\|_{\cL(Y_t)}\,dp\,, 
\end{split}
\end{equation*}
where $C_1$ is a suitable positive constant, depending on $T$, $\|BB^*\|_{\cL(H)}$, $\|K\|_{L^2(0,T)}$, $\|P\|_Z$, $\|U\|_Z$, and we used \eqref{e:stimeY_t}. 
By the Gronwall Lemma it follows that 
$$\|V(r)\|_{\cL(Y_t)} =0$$ for $r\in[t,T]$. 
This means, in particular, $\|V(t)\|_{\cL(Y_t)} =0$. 
Since $t>\tau$ was given and yet arbitrary, it follows that 
\begin{equation*}
V_0(t)=0, \quad V_1(t,s)=0, \quad V_2(t,s,q)=0 \quad \forall s,q\in [0,\tau], 
\; \forall t\in [\tau,T],
\end{equation*}
i.e. $V(r)=0$ as an element of $\cL(Y_\tau)$. 
Since $\tau$ was also given arbitrarily, we attain $\|V(r)\|_{\cL(Y_r)}=0$ for all $r\in [0,T]$,
i.e. 
% $\|V\|_Z=0$, 
$V(\tau)=0$ as an element of $\cL(Y_\tau)$,
which means $U(\cdot)\equiv P(\cdot)$ in $[0,T]$.

Thus, this second part of the proof is complete once the following result is established.

% LEMMA (key for uniqueness)

\begin{lemma} \label{l:lsc} 
For any $\tau\in (0,T)$, the map $t \longmapsto \|V(t)\|_{\cL(Y_\tau)}$ is lower semi-continuous in
$[\tau,T]$.
\end{lemma}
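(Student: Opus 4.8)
The plan is to exhibit $t\mapsto\|V(t)\|_{\cL(Y_\tau)}$ as a pointwise supremum of continuous functions, exploiting the elementary fact that a supremum of any family of continuous (a fortiori lower semi-continuous) functions is lower semi-continuous. Fix $\tau\in(0,T)$. As recalled just before the statement, for $t\in[\tau,T]$ the operator $V(t)$ is read as an element of $\cL(Y_\tau)$ through the isometry $Y_\tau\cong E_{\tau,t}\subset Y_t$ given by extension by zero: an input $x=(y,g)\in Y_\tau$ is identified with $(y,G)\in Y_t$ and $\|V(t)\|_{\cL(Y_\tau)}=\sup\{\|V(t)x\|_{Y_t}:\ \|x\|_{Y_\tau}\le 1\}$. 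Composing with the further isometric zero-extension $Y_t\hookrightarrow Y_T=H\times L^2(0,T;H)$ lets me regard the output $V(t)x$ as an element of the single, $t$-independent space $Y_T$, with $\|V(t)x\|_{Y_t}=\|V(t)x\|_{Y_T}$. Writing $\cN\subset Y_\tau$ for the dense subspace of pairs with $y\in\cD(A)$ and $g\in L^2(0,\tau;\cD(A))$, boundedness of $V(t)$ gives $\|V(t)\|_{\cL(Y_\tau)}=\sup\{\|V(t)x\|_{Y_T}:x\in\cN,\ \|x\|_{Y_\tau}\le 1\}$.

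The crux is then the claim that, for each fixed $x\in\cN$, the map $t\mapsto V(t)x$ is continuous from $[\tau,T]$ into $Y_T$; granting it, each $t\mapsto\|V(t)x\|_{Y_T}$ is continuous and the supremum above is lower semi-continuous, which is the assertion. To prove the claim I would work from the mild form of \eqref{e:DRE_uni}, namely $V(r)x=\int_r^T e^{(p-r)\cA^*}\,\Theta(p)\,e^{(p-r)\cA}x\,dp$, where $\Theta(p)$ abbreviates the bracketed operator $V(p)[\cK_1(p)+\cD_{1,p}]+[\cK_2(p)+\cD_{2,p}]V(p)-V(p)\cB\cB^*P(p)-U(p)\cB\cB^*V(p)$. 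The point of restricting to $x\in\cN$ is that $e^{(p-r)\cA}x$ again has a $\cD(A)$-valued first component and an $L^2(0,\cdot;\cD(A))$ second component, so the point-evaluation blocks $\cD_{1,p},\cD_{2,p}$ act on continuous representatives and $\Theta(p)\,e^{(p-r)\cA}x$ is a bona fide element of $Y_T$. Using $P,U\in Z$, $K\in L^2(0,T)$ and the bound $\|e^{s\cA}\|_{\cL(Y_\tau)}\le C e^{\omega s}$, the integrand is dominated by a fixed $L^1(\tau,T)$ function, uniformly for $r$ in compact subintervals; strong continuity of $\{e^{s\cA}\}$ and $\{e^{s\cA^*}\}$ together with dominated convergence (the moving lower limit $r$ being harmless) then yield continuity of $r\mapsto V(r)x\in Y_T$.

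I expect the genuine obstacle to be the singular blocks $\cD_{1,p}$ and $\cD_{2,p}$. Being unbounded point evaluations, they are meaningful only after the smoothing furnished by the semigroup and only on the dense class $\cN$; their action forces the diagonal traces $V_1(p,p)$ and $V_2(p,\cdot,p)$ to enter $\Theta(p)$, and these are available for the $P$-part by Proposition~\ref{p:cont_P}, while for the $U$-part they are precisely the regularity inherent in $U$ being a solution of \eqref{e:big-riccati}. This singular structure is also the reason one can hope for no more than \emph{lower} semi-continuity of $t\mapsto V(t)$, rather than norm-continuity. A secondary technical point is the moving upper endpoint $[0,t]$ of the $L^2$-component: this is exactly what the zero-extension into the fixed space $Y_T$ handles, guaranteeing that no $L^2$-mass is lost or created as $t$ varies and that $t\mapsto V(t)x$ remains continuous up to $t=T$.
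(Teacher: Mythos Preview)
Your argument is correct and follows essentially the same route as the paper: both hinge on the continuity of $t\mapsto V(t)x$ for $x$ in the dense subspace $D_\tau=\cD(A)\times L^2(0,\tau;\cD(A))$ (your $\cN$), and then pass to lower semi-continuity of the operator norm. The paper's proof is more economical, however. Rather than working through the mild integral representation and dominated convergence, it simply observes that for $X\in D_\tau$ the map $t\mapsto V(t)X$ is \emph{differentiable}---this being the very meaning of $V$ solving the differential equation \eqref{e:DRE_uni} on that domain---hence continuous; the uniform bound $\|V(t)\|_{\cL(Y_\tau)}\le\|P\|_Z+\|U\|_Z$ and density then upgrade this to strong continuity on all of $Y_\tau$, from which
\[
\|V(t)X\|_{Y_\tau}=\lim_{s\to t}\|V(s)X\|_{Y_\tau}\le\liminf_{s\to t}\|V(s)\|_{\cL(Y_\tau)}\,\|X\|_{Y_\tau}
\]
gives the conclusion directly. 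This sidesteps the explicit handling of the singular blocks $\cD_{1,p},\cD_{2,p}$ that you correctly identify as the delicate point: the required diagonal traces $V_1(p,p)$, $V_2(p,\cdot,p)$ are already absorbed into the statement that $V$ is a differentiable solution, so no separate dominated-convergence bookkeeping is needed. Your supremum-of-continuous-functions formulation and the paper's strong-continuity-implies-lsc formulation are equivalent packagings of the same fact.
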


\begin{proof} 
Let $D_\tau:= \cD(A)\times L^2(0,\tau;\cD(A))$.
Clearly $D_\tau$ is dense in $Y_\tau$.
Recalling that $\|V(t)\|_{\cL(Y_\tau)} \le \|P\|_Z +\|U\|_Z \le C$, and that $V(\cdot)X$ is differentiable at the point $t$ when $X\in D_\tau$, a straightforward density argument shows that $V(\cdot)$ is strongly continuous at the point $t$, i.e. 
\begin{equation*}
\lim_{s\to t} \|[V(s)-V(t)]X\|_{Y_\tau}=0 \qquad \forall X\in Y_\tau.
\end{equation*}
In particular,
\begin{equation*}
\|V(t)X\|_{Y_\tau} = \lim_{s\to t} \|V(s)X\|_{Y_\tau} \le \liminf_{s \to t} \|V(s)\|_{\cL(Y_\tau)} \|X\|_{Y_\tau}, \qquad \forall X\in Y_\tau  
\end{equation*}
and consequently
\begin{equation*}
\|V(t)\|_{\cL(Y_\tau)} \le \liminf_{s \to t} \|V(s)\|_{\cL(Y_\tau)}.
\end{equation*}
The proof is complete.
\end{proof}

% APPENDIX

\appendix

\section{Further analytical results} \label{a:appendix}
Here we collect a series of results which are instrumental for the proof of the feedback formula \eqref{e:feedback} and/or critically utilized in the derivation of the coupled system of (three) differential equations satisfied by the operators $P_i$, $i=0,1,2$, initially identified via the representation \eqref{e:riccati-ops}, subsequently shown to be equivalent to \eqref{e:riccati-ops_2}.

\subsection{Instrumental results, I} 
A first set of results specifically pertain to the operators
\begin{equation} \label{e:crucial-op}
\begin{split}
\mu(t)&= \int_0^t e^{(t-s)A}K(s)\, ds
\\
F(t,\tau)&=e^{(t-\tau)A}-\int_\tau^t R(t-s) e^{(s-\tau)A}\, ds
\\
G(t,\sigma,\tau)&= \mu(t-\sigma)-e^{(t-\tau)A}\mu(\tau-\sigma)
\\
M(t,\sigma,\tau)&=G(t,\sigma,\tau)-\int_\tau^t R(t-s) G(s,\sigma,\tau)\, ds
\end{split}
\end{equation}
introduced at the very outset in Section~\ref{s:prelimin} and whose respective definitions are recorded above for the reader's convenience; see \eqref{e:vari-ops} 
($R(\cdot)$ is the solution of the Volterra equation \eqref{e:resolvent-op}).

First, it is useful to list certain basic properties of the operators in 
\eqref{e:crucial-op} which follow immediately from the respective definitions.

% Lemma 0 (A1)

\begin{lemma} \label{l:0}
The operators $\mu(t)$, $F(t,\tau)$, $G(t,\sigma,\tau)$ and $M(t,\sigma,\tau)$
recalled above in \eqref{e:crucial-op} satisfy
\begin{equation} \label{e:banali}
\begin{matrix}
& \mu(0)=0\,, \qquad F(\tau,\tau)=I\,;
\\[1mm]
& G(\tau,\sigma,\tau)=0\,, \quad G(t,\tau,\tau)=\mu(t-\tau)\,;
\\[1mm]
& M(\tau,\sigma,\tau)= 0\,, \quad M(t,\tau,\tau)=-R(t-\tau)\,.
\end{matrix}
\end{equation}
\end{lemma}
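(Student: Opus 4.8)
The plan is to read off all six identities in \eqref{e:banali} directly from the defining formulas \eqref{e:crucial-op} (equivalently \eqref{e:vari-ops}), observing that five of them are immediate and that only the last, $M(t,\tau,\tau)=-R(t-\tau)$, carries any content. I would first record that $\mu(0)=0$ and $F(\tau,\tau)=I$, since in each case the defining integral in \eqref{e:mu}, \eqref{e:F} is taken over a degenerate interval and $e^{0\cdot A}=I$. Next, evaluating the definition \eqref{e:G} of $G$ at $t=\tau$ gives $G(\tau,\sigma,\tau)=\mu(\tau-\sigma)-I\cdot\mu(\tau-\sigma)=0$, while setting $\sigma=\tau$ gives $G(t,\tau,\tau)=\mu(t-\tau)-e^{(t-\tau)A}\mu(0)=\mu(t-\tau)$, where the already-established identity $\mu(0)=0$ kills the second term. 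Finally $M(\tau,\sigma,\tau)=0$ follows at once from \eqref{e:M}, because its leading term is $G(\tau,\sigma,\tau)=0$ and its integral is again over the empty interval $[\tau,\tau]$.

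It remains to establish $M(t,\tau,\tau)=-R(t-\tau)$, which is where the real work sits. Setting $\sigma=\tau$ in \eqref{e:M} and inserting the two facts just proved, namely $G(t,\tau,\tau)=\mu(t-\tau)$ and $G(s,\tau,\tau)=\mu(s-\tau)$, I would write
\begin{equation*}
M(t,\tau,\tau)=\mu(t-\tau)-\int_\tau^t R(t-s)\,\mu(s-\tau)\,ds .
\end{equation*}
The substitution $u=s-\tau$ (and $r:=t-\tau$) turns the integral into the convolution $\int_0^r R(r-u)\mu(u)\,du$, so that $M(t,\tau,\tau)=\mu(r)-(R\ast\mu)(r)$, where $(f\ast g)(r):=\int_0^r f(r-u)g(u)\,du$.

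The decisive step is then to recognize that, besides the defining equation \eqref{e:resolvent-op} (which in this notation reads $R-\mu\ast R=-\mu$), the resolvent kernel also satisfies the companion identity $R-R\ast\mu=-\mu$, i.e. $(R\ast\mu)(r)=R(r)+\mu(r)$. Granting this, the two copies of $\mu(r)$ cancel and I obtain $M(t,\tau,\tau)=-R(r)=-R(t-\tau)$, as claimed. I expect this companion identity to be the only genuine obstacle, and I would justify it from the series representation \eqref{e:iterated}: since $R=-\sum_{n\ge1}\mu^{\ast n}$, associativity of convolution gives $R\ast\mu=-\sum_{n\ge1}\mu^{\ast(n+1)}=\mu\ast R$, and combining this with \eqref{e:resolvent-op}, which says $\mu\ast R=R+\mu$, yields $R\ast\mu=R+\mu$. (Alternatively, one may note that under Assumptions~\ref{a:ipo_0} the kernel $K$ is scalar-valued, so every $\mu(\cdot)$ lies in the commutative algebra generated by the semigroup and $R$ commutes with $\mu$ pointwise, which again collapses the two convolutions.) This completes the verification of all identities in \eqref{e:banali}.
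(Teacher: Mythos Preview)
Your proof is correct and follows essentially the same route as the paper's, which likewise reads off the first five identities directly from the definitions and obtains $M(t,\tau,\tau)=-R(t-\tau)$ by reducing to the resolvent equation after the substitution $s\mapsto s-\tau$. You are in fact slightly more careful than the paper: you explicitly justify the companion identity $R\ast\mu=\mu\ast R$ (via the series \eqref{e:iterated} and associativity of convolution), whereas the paper invokes \eqref{e:resolvent-op} directly without commenting on the order of the factors.
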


\begin{proof}
The conditions grouped collectively in \eqref{e:banali} are inferred via an easy verification.
The two conditions in the first row are immediate, on the basis of \eqref{e:crucial-op}.
Then, we note that $$G(\tau,\sigma,\tau)=\mu(\tau-\sigma)-\mu(\tau-\sigma)=0\,;$$
then $M(\tau,\sigma,\tau)= G(\tau,\sigma,\tau)=0$ as a consequence.
We have instead $$G(t,\tau,\tau)= \mu(t-\tau)-\mu(0)=\mu(t-\tau)\,,$$ %as $\mu(0)=0$,
and therefore,
\begin{equation*}
\begin{split}
M(t,\tau,\tau)&=G(t,\tau,\tau)-\int_\tau^t R(t-s) G(s,\tau,\tau)\, ds
%\\
%&
= \mu(t-\tau) -\int_\tau^t R(t-s) \mu(s-\tau)\, ds
\\
&=\mu(t-\tau) -\int_0^{t-\tau} R(t-\tau-\lambda) \mu(\lambda)\, d\lambda
=-R(t-\tau)\,,
\end{split}
\end{equation*}
where in the last equality the Volterra equation \eqref{e:resolvent-op} satisfied by the resolvent operator $R$ is employed. 
\end{proof}

\smallskip
We move on exploring the differentiability of the operator $\mu$.

% \mu(t) Prop. 1 - A2

\begin{proposition} \label{p:1}
Let $\mu(\cdot)$ be the operator in \eqref{e:banali}.
If $x\in \cD(A)$, then $\mu(\cdot)x$ is differentiable, and
\begin{equation}\label{e:mu-diff}
\mu'(t)x=K(t)x+\mu(t)Ax\,, \qquad x\in \cD(A)
\end{equation}
holds true.

\end{proposition}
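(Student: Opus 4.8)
The plan is to bypass a direct study of difference quotients and instead identify $\mu(\cdot)x$ with the integral of the claimed right-hand side, thereby obtaining absolute continuity together with the derivative formula in one stroke. First I would record that, for $x\in\cD(A)$, one has $\mu(t)x\in\cD(A)$ with $A\mu(t)x=\mu(t)Ax$: indeed $e^{(t-s)A}K(s)x=K(s)e^{(t-s)A}x$, and since $\|Ae^{(t-s)A}x\|_H=\|e^{(t-s)A}Ax\|_H\le Ce^{\omega(t-s)}\|Ax\|_H$ is integrable on $[0,t]$, the closedness of $A$ lets us pass $A$ under the integral sign defining $\mu(t)x$.

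The central step is the evaluation of $\int_0^t\mu(\sigma)Ax\,d\sigma$. Writing out $\mu(\sigma)Ax=\int_0^\sigma e^{(\sigma-s)A}K(s)Ax\,ds$ and exchanging the order of integration (Fubini applies by the integrability just noted), I obtain
\begin{equation*}
\int_0^t\mu(\sigma)Ax\,d\sigma=\int_0^t\Big(\int_s^t e^{(\sigma-s)A}Ax\,d\sigma\Big)K(s)\,ds.
\end{equation*}
The inner integral is computed through the vector-valued fundamental theorem of calculus for the semigroup, namely $\int_s^t e^{(\sigma-s)A}Ax\,d\sigma=e^{(t-s)A}x-x$ for $x\in\cD(A)$. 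Substituting and splitting, the right-hand side reduces to $\mu(t)x-\int_0^t K(s)x\,ds$, whence
\begin{equation*}
\mu(t)x=\int_0^t\big[K(s)x+\mu(s)Ax\big]\,ds.
\end{equation*}

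Since $s\mapsto K(s)x$ lies in $L^1(0,T;H)$ (as $K\in L^2\subset L^1$) and $s\mapsto\mu(s)Ax$ is continuous, the integrand belongs to $L^1$; hence $\mu(\cdot)x$ is absolutely continuous, and Lebesgue differentiation delivers $\mu'(t)x=K(t)x+\mu(t)Ax$ at every Lebesgue point of $K$, i.e.\ for a.e.\ $t\in[0,T]$. The point to be explicit about---rather than a genuine obstacle---is precisely this a.e.\ caveat: because $K$ is merely $L^2$, no classical derivative can be expected at every $t$, and the correct reading of the statement is that $\mu(\cdot)x$ is absolutely continuous with a.e.\ derivative as displayed. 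Should a difference-quotient argument be preferred instead, the only delicate term is $h^{-1}\int_t^{t+h}e^{(t+h-s)A}K(s)x\,ds$, which one splits as $h^{-1}\int_t^{t+h}K(s)x\,ds$ (converging to $K(t)x$ at Lebesgue points) plus a remainder bounded by $C\int_t^{t+h}|K(s)|\,ds\le C\sqrt{h}\,\|K\|_{L^2}\to 0$, the latter estimate using $\|(e^{(t+h-s)A}-I)x\|_H\le (t+h-s)\sup_{u\in[0,h]}\|e^{uA}Ax\|_H$ for $x\in\cD(A)$.
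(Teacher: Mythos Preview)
Your argument is correct and takes a genuinely different route from the paper's. The paper proceeds by a direct difference-quotient computation: it writes
\[
\frac{\mu(t+h)-\mu(t)}{h}x=\frac{1}{h}\int_t^{t+h} e^{(t+h-\lambda)A}K(\lambda)x\,d\lambda
+\int_0^t \frac{e^{(t+h-\lambda)A}-e^{(t-\lambda)A}}{h}K(\lambda)x\,d\lambda
\]
and passes to the limit, invoking dominated convergence for the second term (which yields $\mu(t)Ax$) and a Lebesgue-differentiation type step for the first (which yields $K(t)x$). You instead establish the integral identity $\mu(t)x=\int_0^t\big[K(s)x+\mu(s)Ax\big]\,ds$ via Fubini and the semigroup formula $\int_s^t e^{(\sigma-s)A}Ax\,d\sigma=e^{(t-s)A}x-x$, and then read off absolute continuity and the a.e.\ derivative. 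Your approach has the merit of making the regularity of $\mu(\cdot)x$ explicit (absolute continuity on $[0,T]$) and of isolating cleanly why the formula can only hold a.e.\ when $K$ is merely $L^2$; the paper's direct computation is shorter but leaves the same a.e.\ caveat implicit in the convergence of the first summand. The closing paragraph of your proposal in fact reconstructs the paper's argument as an alternative, so you have effectively covered both.
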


\begin{proof}
The proof is straighforward. 
With $t\ge 0$ and $h\ne 0$ ($h>0$, in the case $t=0$, also in the sequel) we compute
\begin{equation*}
\frac{\mu(t+h)-\mu(t)}{h}=\frac{1}{h} \int_t^{t+h} e^{(t+h-\lambda)A}K(\lambda)\,d\lambda
+ \int_0^t \frac{e^{(t+h-\lambda)A}-e^{(t-\lambda)A}}{h}K(\lambda)\,d\lambda\,,
\end{equation*}
which implies readily, for $x\in \cD(A)$, that there exists
\begin{equation*}
\begin{split}
\lim_{h\to 0}\frac{\mu(t+h)-\mu(t)}{h}x &=
K(t)x + \int_0^t Ae^{(t-\lambda)A}K(\lambda)x\,d\lambda
\\[1mm]
&= K(t)x + \int_0^t e^{(t-\lambda)A}K(\lambda)Ax\,d\lambda\,,
\end{split}
\end{equation*}
that is nothing but \eqref{e:mu-diff}.
\end{proof}

\begin{remark}
\begin{rm}
It is worth noting that the property that $R$ commutes with the operator $A$ -- and hence, with the semigroup $e^{tA}$ as well -- is essential for the proof of Proposition~\ref{p:1}. 
\end{rm}
\end{remark}

% R(t) Prop. 2 - A3

\begin{proposition} \label{p:2}
The resolvent operator $R(\cdot)$
is differentiable on $\cD(A)$, with
\begin{equation}\label{e:R-diff}
R'(t)x= -K(t)x + R(t)Ax+\int_0^t K(t-\sigma)R(\sigma)\,d\sigma\,,
\qquad x\in \cD(A)\,.
\end{equation}
\end{proposition}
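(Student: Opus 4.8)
The plan is to differentiate the resolvent equation \eqref{e:resolvent-op}, rewritten in resolved form as
\[
R(t) = -\mu(t) + \int_0^t \mu(t-s)R(s)\,ds,
\]
and to feed in the derivative of $\mu$ supplied by Proposition~\ref{p:1}. The one structural fact I must secure \emph{before} differentiating is that $R(t)$ leaves $\cD(A)$ invariant and commutes with $A$ there; this is precisely what will license the application of \eqref{e:mu-diff} to the composition $\mu(t-s)R(s)x$ appearing under the convolution integral.

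To establish the commutation I would invoke the explicit iterated (Neumann) representation \eqref{e:iterated}, which exhibits $R$ as a uniformly convergent series of convolution powers of $\mu$. Because the memory kernel $K$ is scalar-valued, each operator $\mu(t)=\int_0^t e^{(t-s)A}K(s)\,ds$ maps $\cD(A)$ into itself and satisfies $A\mu(t)x=\mu(t)Ax$ for $x\in\cD(A)$ (this is exactly the commutation underlying the proof of Proposition~\ref{p:1}). This property is inherited by every convolution power and hence by each partial sum $R_n$. Since $R_n(t)x\to R(t)x$ and $AR_n(t)x=R_n(t)Ax\to R(t)Ax$, the closedness of $A$ forces $R(t)x\in\cD(A)$ with $AR(t)x=R(t)Ax$.

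With the commutation in hand, I would differentiate by Leibniz's rule, using $\mu(0)=0$ from \eqref{e:banali} to kill the boundary term, to get
\[
R'(t)x = -\mu'(t)x + \int_0^t \mu'(t-s)R(s)x\,ds.
\]
Applying \eqref{e:mu-diff} to $\mu'(t)x$ and, legitimately because $R(s)x\in\cD(A)$, to $\mu'(t-s)R(s)x$, this becomes
\[
R'(t)x = -K(t)x - \mu(t)Ax + \int_0^t K(t-s)R(s)x\,ds + \int_0^t \mu(t-s)R(s)Ax\,ds.
\]
A second appeal to \eqref{e:resolvent-op}, now read with $Ax$ in place of $x$, identifies the last integral as $\bigl[R(t)+\mu(t)\bigr]Ax$; the two occurrences of $\mu(t)Ax$ cancel, and the claimed identity \eqref{e:R-diff} drops out (after renaming the integration variable $s\mapsto\sigma$).

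The main obstacle is the justification of the term-by-term differentiation of the convolution when $K$ is only $L^2$. The target formula already contains the factor $K(t)x$, so differentiability can hold only for a.e.\ $t$ (equivalently, in the sense that $R(\cdot)x$ is absolutely continuous with the stated a.e.\ derivative), and the interchange of $\tfrac{d}{dt}$ with $\int_0^t$ must be performed at this level rather than pointwise. Establishing the requisite uniform bounds on $\mu'(t-s)R(s)x$ over $\cD(A)$ and verifying the dominated-convergence hypotheses behind the Leibniz step is where the quantitative effort lies; the subsequent algebraic cancellation producing \eqref{e:R-diff} is then immediate.
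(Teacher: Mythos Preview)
Your proposal is correct and follows essentially the same strategy as the paper: differentiate the resolved form of the Volterra equation for $R$, apply \eqref{e:mu-diff}, and use the resolvent identity once more (with $Ax$) to produce the cancellation of $\mu(t)Ax$.

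The one noteworthy difference is in the order of the convolution. You work with $R(t)=-\mu(t)+\int_0^t \mu(t-s)R(s)\,ds$ straight from \eqref{e:resolvent-op}, which forces you to verify that $R(s)x\in\cD(A)$ before you may apply \eqref{e:mu-diff} to $\mu(t-s)R(s)x$; you handle this correctly via the Neumann series and the closedness of $A$. The paper instead uses the iterated representation \eqref{e:iterated} to observe that $\mu*R=R*\mu$, rewriting the equation as $R(t)=-\mu(t)+\int_0^t R(\sigma)\mu(t-\sigma)\,d\sigma$. With the factors in this order, the $t$-derivative falls on $\mu(t-\sigma)x$ with $x\in\cD(A)$ directly, and the bounded operator $R(\sigma)$ sits harmlessly in front; the invariance of $\cD(A)$ under $R(\sigma)$ never needs to be made explicit. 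Both routes rely on the same ingredient (the Neumann series and the fact that $K$ is scalar), so this is a matter of packaging rather than substance. Your concern about $K\in L^2$ and a.e.\ differentiability is legitimate and applies equally to the paper's argument, which treats this point as routine.
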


\begin{proof}
Recall that $R(t)$ is expressed by the iterated formula \eqref{e:iterated}, and take
the convolution of $\mu$ and $R$, to find
\begin{equation*}
\mu\ast R= R\ast \mu = -\mu\ast\mu(t) - \mu\ast\mu\ast \mu(t)- \dots\,,
\end{equation*}

which once read from the left to the right yields $R(t)+\mu(t)=R\ast \mu$, that is

\begin{equation*}
R(t) %= -\mu(t) + R\ast \mu 
= -\mu(t) +\int_0^t R(\sigma)\mu(t-\sigma)\,d\sigma\,.
\end{equation*}

The above applied to $x\in \cD(A)$ establishes that $R(t)x$ is differentiable in the
first place in view of Proposition~\ref{p:1}; then, a straightforward computation yields \eqref{e:R-diff} by virtue of \eqref{e:mu-diff}.

\end{proof}

% F(t,tau) Prop. 3 - A4

\begin{proposition} \label{p:3}
If $x\in \cD(A)$, then there exist both $\partial_t F(t,\tau)x$ and $\partial_\tau F(t,\tau)x$, given by the following espressions:
% FORMULA
\begin{equation}\label{e:F-derivatives}
\begin{split}
\partial_t F(t,\tau)x &=e^{(t-\tau)A}Ax-\int_\tau^t R'(t-s)e^{(s-\tau)A}x\,ds\,, 
\\[1mm]
\partial_\tau F(t,\tau)x &=-e^{(t-\tau)A}Ax+R(t-\tau)x-\int_\tau^t R(t-s)e^{(s-\tau)A}Ax\,ds
\\[1mm]
&=-F(t,\tau)Ax+R(t-\tau)x\,.
\end{split}
\end{equation}

\end{proposition}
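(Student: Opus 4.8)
The plan is to differentiate the defining expression \eqref{e:F} for $F(t,\tau)$ termwise, exploiting the differentiability of the semigroup on $\cD(A)$ together with the differentiability of the resolvent kernel $R$ established in Proposition~\ref{p:2}. Throughout I use that $x\in\cD(A)$ forces $e^{(s-\tau)A}x\in\cD(A)$ for every $s\ge\tau$, with $Ae^{(s-\tau)A}x=e^{(s-\tau)A}Ax$; this guarantees that every object to be differentiated is a \emph{classical} (not merely mild) solution of the relevant identity, and in particular that $R'(t-s)$ may legitimately act on $e^{(s-\tau)A}x$ via \eqref{e:R-diff}.

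For the $t$-derivative I would split $F(t,\tau)x=e^{(t-\tau)A}x-\int_\tau^t R(t-s)e^{(s-\tau)A}x\,ds$ and treat the two summands separately. The first yields $e^{(t-\tau)A}Ax$ at once. In the integral the variable $t$ appears both in the upper limit and inside the kernel $R(t-s)$, so the Leibniz rule produces a boundary increment $R(0)e^{(t-\tau)A}x$ together with the interior term $\int_\tau^t \partial_t[R(t-s)]\,e^{(s-\tau)A}x\,ds$. The boundary term vanishes because $R(0)=0$: this is read off from the Volterra equation \eqref{e:resolvent-op} at $t=0$ combined with $\mu(0)=0$ recorded in Lemma~\ref{l:0}. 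The surviving integrand is $R'(t-s)e^{(s-\tau)A}x$, which is meaningful precisely because $e^{(s-\tau)A}x\in\cD(A)$. Collecting the two contributions gives the first identity in \eqref{e:F-derivatives}.

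For the $\tau$-derivative I proceed analogously. The parameter $\tau$ enters the first summand through $e^{(t-\tau)A}x$, producing $-e^{(t-\tau)A}Ax$, and enters the subtracted integral both through its lower limit and through the factor $e^{(s-\tau)A}x$. The Leibniz rule then contributes a boundary term $R(t-\tau)x$ (as $e^{0\cdot A}x=x$) together with an interior integral stemming from $\partial_\tau e^{(s-\tau)A}x=-e^{(s-\tau)A}Ax$. Assembling all contributions and recognizing the resulting combination as $-F(t,\tau)Ax$ by reinserting the definition \eqref{e:F} of $F(t,\tau)$ applied to $Ax$, one arrives at the compact form $\partial_\tau F(t,\tau)x=-F(t,\tau)Ax+R(t-\tau)x$, which is the second identity in \eqref{e:F-derivatives}.

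The only genuinely delicate point is the rigorous justification of differentiating under the integral sign, i.e.\ passing to the limit in the difference quotients of $\int_\tau^t R(t-s)e^{(s-\tau)A}x\,ds$. I expect this to be the main obstacle, since $R$ is only a continuous (not a priori smooth) kernel and its derivative exists solely on $\cD(A)$. I would make the passage precise exactly as in the proof of Proposition~\ref{p:1}: write out the incremental ratios, isolate the boundary increment from the interior increment, and dominate the latter using the continuity and local boundedness of $R$ on $[0,T]$, the uniform bound $\|e^{rA}\|_{\cL(H)}\le Ce^{\omega T}$, and the explicit expression \eqref{e:R-diff} for $R'$ on $\cD(A)$ (whose only non-bounded ingredient is the $L^2$-in-time factor $K$, which is integrable against the remaining bounded terms). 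A dominated-convergence argument then transfers the limit inside the integral; with $x\in\cD(A)$ fixed, all estimates are uniform on compact subsets of $\{\tau\le t\le T\}$, so the derivatives obtained are continuous in $(t,\tau)$, as required for the later differentiations of $P_0$, $P_1$ and $P_2$.
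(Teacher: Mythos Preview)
Your argument is correct and follows exactly the route the paper intends: the paper's own proof is the single sentence ``The thesis follows readily from the definition of $F(t,\tau)$, in light of Proposition~\ref{p:2},'' and your termwise differentiation via the Leibniz rule---using $R(0)=0$ for the $t$-derivative and recognising $-F(t,\tau)Ax$ in the $\tau$-derivative---is precisely what that sentence encodes. Your additional care about justifying differentiation under the integral sign (via dominated convergence, the continuity of $R$, and the explicit form \eqref{e:R-diff} of $R'$ on $\cD(A)$) only makes rigorous a step the paper leaves implicit.
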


\begin{proof}
The thesis follows readily from the definition of $F(t,\tau)$, in light of Proposition~\ref{p:2}.
\end{proof}

% M(t,sigma,tau) Prop. 5 - A5

\begin{proposition} \label{p:5}
If $x\in \cD(A)$, then there exists
% FORMULA
\begin{equation}\label{e:M-derivative}
\partial_\tau M(t,\sigma,\tau)x 
\end{equation} 

\end{proposition}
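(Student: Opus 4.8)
The plan is to differentiate the defining expression $M(t,\sigma,\tau)=G(t,\sigma,\tau)-\int_\tau^t R(t-s)G(s,\sigma,\tau)\,ds$ term by term with respect to $\tau$, working throughout on an element $x\in\cD(A)$ so that the regularity provided by Proposition~\ref{p:1} is at our disposal. First I would treat the differentiability of $G(t,\sigma,\tau)=\mu(t-\sigma)-e^{(t-\tau)A}\mu(\tau-\sigma)$: the summand $\mu(t-\sigma)$ does not depend on $\tau$, while for the second summand I would apply the product rule, using $\partial_\tau e^{(t-\tau)A}x=-Ae^{(t-\tau)A}x$ together with $\partial_\tau \mu(\tau-\sigma)x=K(\tau-\sigma)x+\mu(\tau-\sigma)Ax$ from \eqref{e:mu-diff}. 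Since $A$ commutes with the semigroup and hence with $\mu(\tau-\sigma)$ on $\cD(A)$, the terms $-Ae^{(t-\tau)A}\mu(\tau-\sigma)x$ and $+e^{(t-\tau)A}\mu(\tau-\sigma)Ax$ cancel, leaving the clean identity $\partial_\tau G(t,\sigma,\tau)x=-e^{(t-\tau)A}K(\tau-\sigma)x$.

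Next I would differentiate the convolution integral by Leibniz's rule, which produces a boundary term $-R(t-\tau)G(\tau,\sigma,\tau)x$ and an interior term $\int_\tau^t R(t-s)\,\partial_\tau G(s,\sigma,\tau)x\,ds$. The boundary term vanishes because $G(\tau,\sigma,\tau)=0$ by Lemma~\ref{l:0}; the interior term, upon inserting the formula for $\partial_\tau G$ just obtained and pulling out the scalar factor $K(\tau-\sigma)$, equals $-K(\tau-\sigma)\int_\tau^t R(t-s)e^{(s-\tau)A}x\,ds$. Combining the two computations and recognizing the bracketed quantity as $F(t,\tau)x$ from \eqref{e:F}, I would arrive at
\[
\partial_\tau M(t,\sigma,\tau)x=-K(\tau-\sigma)\Big[e^{(t-\tau)A}x-\int_\tau^t R(t-s)e^{(s-\tau)A}x\,ds\Big]=-F(t,\tau)K(\tau-\sigma)x,
\]
which is precisely the expression invoked later in the derivation of the equations for $P_1$ and $P_2$.

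I expect the main technical point to be the rigorous justification of differentiation under the integral sign in the convolution term. The key observation here is that \emph{no} derivative of $R$ is required: only $G(s,\sigma,\tau)x$ is differentiated in $\tau$, and its $\tau$-derivative $-e^{(s-\tau)A}K(\tau-\sigma)x$ is continuous in $s$ and dominated by $C e^{\omega(s-\tau)}|K(\tau-\sigma)|\,\|x\|_H$, so that against the $L^2$-in-time kernel $R(t-\cdot)$ a dominated-convergence argument legitimizes both the interchange of limit and integral and the use of Leibniz's rule. The only other care needed is the commutation $A\mu(\tau-\sigma)x=\mu(\tau-\sigma)Ax$, which holds because $K$ is scalar-valued and $A$ commutes with $e^{sA}$ on $\cD(A)$ — exactly the feature already emphasized in the remark following Proposition~\ref{p:1}.
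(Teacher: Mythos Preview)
Your proposal is correct and follows essentially the same route as the paper's own proof: compute $\partial_\tau G(t,\sigma,\tau)x=-e^{(t-\tau)A}K(\tau-\sigma)x$ via the product rule and the cancellation afforded by \eqref{e:mu-diff}, then apply Leibniz to the convolution term (the boundary contribution vanishes since $G(\tau,\sigma,\tau)=0$ by Lemma~\ref{l:0}), and recognize $F(t,\tau)$ in the outcome to obtain $\partial_\tau M(t,\sigma,\tau)x=-F(t,\tau)K(\tau-\sigma)x$. Your closing remarks on dominated convergence and the commutation $A\mu=\mu A$ make explicit justifications that the paper leaves implicit.
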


\begin{proof}
Let $x\in \cD(A)$ be given.
In view of the definition of $M(t,\sigma,\tau)x$ in \eqref{e:crucial-op},
we preliminarly examine $G(t,\sigma,\tau)x$.
Because of Proposition~\ref{p:1} and \eqref{e:mu-diff}, we see that there exists 
\begin{equation*}
\begin{split}
\partial_\tau G(t,\sigma,\tau)x&= 
-\frac{\partial}{\partial\tau}\big[e^{(t-\tau)A}\mu(\tau-\sigma)x\big]
\\
&= Ae^{(t-\tau)A}\mu(\tau-\sigma)x- e^{(t-\tau)A}\mu'(\tau-\sigma)x
\\
&\underset{\eqref{e:mu-diff}} = 
e^{(t-\tau)A}A\mu(\tau-\sigma)x-e^{(t-\tau)A}\big[K(\tau-\sigma)x+\mu(\tau-\sigma)Ax\big]
\\
&=-e^{(t-\tau)A}K(\tau-\sigma)x\,.
\end{split}
\end{equation*} 
Going back once more to the definitions in \eqref{e:crucial-op},
the above implies that -- when acting on $\cD(A)$ -- there exists 
\begin{equation*}
\begin{split}
\partial_\tau M(t,\sigma,\tau)
&=\partial_\tau G(t,\sigma,\tau)+R(t-\tau) \underbrace{G(\tau,\sigma,\tau)}_{\equiv 0} 
-\int_\tau^t R(t-s) \partial_\tau G(s,\sigma,\tau)\, ds
\\
&=-e^{(t-\tau)A}K(\tau-\sigma)+\int_\tau^t R(t-s) e^{(s-\tau)A}K(\tau-\sigma)\,ds
\\
&=-\Big[e^{(t-\tau)A}-\int_\tau^t R(t-s) e^{(s-\tau)A}\,ds\Big]\,K(\tau-\sigma)
= -F(t,\tau) K(\tau-\sigma)\,,
\end{split}
\end{equation*}
which confirms \eqref{e:M-derivative}.

\end{proof}

% 2^ subsection of the APPENDIX

\subsection{Instrumental results, II} 
A second set of results pertains to the couples of operators 
$\Psi_1(t,\tau)$ and $\Psi_2(t,\sigma,\tau)$, $Z_1(t,\tau)$ and $Z_2(t,s,\tau)$,
identified earlier in the paper (see \eqref{e:def-psi12} and \eqref{e:def-Z12}).
Their respective expressions are collectively recalled here for the reader's convenience:
\begin{equation} \label{e:recall-4}
\begin{split}
\Psi_1(t,\tau) &= -\Big[\Lambda_\tau^{-1}L_\tau^*QF(\cdot,\tau)\Big](t)\,,
%\label{e:recall-Psi1}
\\
\Psi_2(t,\sigma,\tau) &= -\Big[\Lambda_\tau^{-1}L_\tau^*QM(\cdot,\sigma,\tau)\Big](t)\,,
%\label{e:recall-Psi2}
\\[1mm]
Z_1(t,\tau) &= F(t,\tau)+\int_\tau^t F(t,\sigma) B\Psi_1(\sigma,\tau)\,d\sigma\,,
%\quad \tau\le t\,,
%\label{e:recall-Z1}
\\
Z_2(t,s,\tau) &= M(t,s,\tau)+\int_\tau^t F(t,\sigma) B\Psi_2(\sigma,s,\tau)\,d\sigma\,.
%\quad \tau\le s\le t
%\label{e:recall-Z2}
\end{split}
\end{equation}

The representation formulas 
\begin{equation}\label{e:useful}
\begin{split}
Z_1(t,\tau) &= \big\{\big[I-L_\tau\Lambda_\tau^{-1}L_\tau^*Q\big]F(\cdot,\tau)\big\}(t)\,,
\\[1mm]
Z_2(t,s,\tau) &= \big\{\big[I-L_\tau\Lambda_\tau^{-1}L_\tau^*Q\big]M(\cdot,s,\tau)\big\}(t)\,.
\end{split}
\end{equation}
that follow inserting the expressions of $\Psi_1$ and $\Psi_2$ within the ones of
$Z_1$ and $Z_2$, respectively, are especially useful for the derivation of the
Riccati equation satisfied by $P_2$ (specifically in the proof of the statement S5. of 
Theorem~\ref{t:main}, part (ii), Lemma~\ref{l:4+6}).

\medskip

\subsubsection{Adjoint operators} 
 We begin by computing the adjoints of the operators listed in \eqref{e:recall-4}, as they occur in the very definition \eqref{e:riccati-ops} of the operators $P_i$, $i=0,1,2$.
The obtained expressions play a critical role in the derivation of the alternative (and neater) representations \eqref{e:riccati-ops_2} of the $P_i$, $i=0,1,2$; see Lemma~\ref{l:key-lemma}.

\begin{lemma}
Let $\Psi_1(t,\tau)$, $\Psi_2(t,\sigma,\tau)$, $Z_1(t,\tau)$ and $Z_2(t,s,\tau)$
be the operators recalled in \eqref{e:recall-4}.
The respective adjoint operators are acting as follows:
\begin{equation} \label{e:4-adjoints}
\begin{split}
\Psi_1(\cdot,\tau)^*g &= F(\cdot,\tau)^*Q [L_\tau \Lambda_\tau^{-1}g](\cdot)\,,
\\
\Psi_2(\cdot,\sigma,\tau)^*g &= M(\cdot,\sigma,\tau)^*Q[L_\tau\Lambda_\tau^{-1}g](\cdot)
\\
Z_1(\cdot,\tau)^*g &= F(\cdot,\tau)^*g(\cdot)+\Psi_1(\cdot,\tau)^*[L_\tau^*g](\cdot)
\\
Z_2(\cdot,s,\tau)^*g &= M(\cdot,s,\tau)^*g(\cdot) +\Psi_2(\cdot,s,\tau)^*[L_\tau^*g](\cdot)
\end{split}
\end{equation}
for every $g\in L^2(\tau,T;H)$.

\end{lemma}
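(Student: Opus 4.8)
The plan is to stop viewing $\Psi_1,\Psi_2,Z_1,Z_2$ as operator-valued functions of the time variable and instead read them, through the factorizations recorded in \eqref{e:recall-4} and \eqref{e:useful}, as single bounded operators from $H$ into $L^2(\tau,T;U)$ (for $\Psi_1,\Psi_2$) or into $L^2(\tau,T;H)$ (for $Z_1,Z_2$); the four identities then drop out of the elementary reversal rule $(ST)^*=T^*S^*$. Explicitly, \eqref{e:recall-4} exhibits the compositions $\Psi_1(\cdot,\tau)=-\Lambda_\tau^{-1}L_\tau^*Q\,F(\cdot,\tau)$ and $\Psi_2(\cdot,\sigma,\tau)=-\Lambda_\tau^{-1}L_\tau^*Q\,M(\cdot,\sigma,\tau)$, while \eqref{e:useful} exhibits $Z_1(\cdot,\tau)=[I-L_\tau\Lambda_\tau^{-1}L_\tau^*Q]F(\cdot,\tau)$ and $Z_2(\cdot,s,\tau)=[I-L_\tau\Lambda_\tau^{-1}L_\tau^*Q]M(\cdot,s,\tau)$, where $F(\cdot,\tau)\colon H\to L^2(\tau,T;H)$, $x\mapsto F(\cdot,\tau)x$, and similarly for $M(\cdot,\sigma,\tau)$.

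The one structural fact I would record first is that $\Lambda_\tau=I+L_\tau^*QL_\tau$ is self-adjoint on $L^2(\tau,T;U)$ with self-adjoint inverse. This is immediate from the standing hypothesis $Q=Q^*\ge0$: writing $L_\tau^*QL_\tau=(Q^{1/2}L_\tau)^*(Q^{1/2}L_\tau)$ shows $\Lambda_\tau\ge I$, so $\Lambda_\tau=\Lambda_\tau^*$ is boundedly invertible and $(\Lambda_\tau^{-1})^*=\Lambda_\tau^{-1}$. Together with $Q^*=Q$ and $(L_\tau^*)^*=L_\tau$, these are the only algebraic inputs the computation needs.

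With this preliminary in hand, the first two identities are obtained by transposing the corresponding composition: from $\Psi_1(\cdot,\tau)=-\Lambda_\tau^{-1}L_\tau^*Q\,F(\cdot,\tau)$ one reads off $\Psi_1(\cdot,\tau)^*=-F(\cdot,\tau)^*Q\,L_\tau\Lambda_\tau^{-1}$, and likewise $\Psi_2(\cdot,\sigma,\tau)^*=-M(\cdot,\sigma,\tau)^*Q\,L_\tau\Lambda_\tau^{-1}$, where $F(\cdot,\tau)^*$ and $M(\cdot,\sigma,\tau)^*$ denote the adjoints of the multiplication-type maps above (integration against the pointwise $\cL(H)$-adjoints $F(p,\tau)^*$, $M(p,\sigma,\tau)^*$) and the admissible argument is $g\in L^2(\tau,T;U)$. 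For the last two identities I would transpose \eqref{e:useful} directly, obtaining $Z_1(\cdot,\tau)^*=F(\cdot,\tau)^*[I-Q\,L_\tau\Lambda_\tau^{-1}L_\tau^*]$ and its analogue for $Z_2$; recognizing the correction term as $\Psi_1(\cdot,\tau)^*L_\tau^*$ (respectively $\Psi_2(\cdot,s,\tau)^*L_\tau^*$) then produces the recursive forms $Z_1(\cdot,\tau)^*g=F(\cdot,\tau)^*g+\Psi_1(\cdot,\tau)^*[L_\tau^*g]$ and $Z_2(\cdot,s,\tau)^*g=M(\cdot,s,\tau)^*g+\Psi_2(\cdot,s,\tau)^*[L_\tau^*g]$, for $g\in L^2(\tau,T;H)$.

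I expect the only genuinely delicate point to be notational rather than analytical: one must keep rigorously separate the two adjoints in play, namely the pointwise $\cL(H)$- and $\cL(H,U)$-adjoints appearing inside the kernels versus the global adjoints of the composite maps into $L^2(\tau,T;\cdot)$ that the identities actually assert, and one must track the admissible domain of $g$ (which is $L^2(\tau,T;U)$ for the $\Psi$-adjoints and $L^2(\tau,T;H)$ for the $Z$-adjoints). Once the convention is fixed that $\Psi_i(\cdot,\tau)^*$, $Z_i(\cdot,\tau)^*$, $F(\cdot,\tau)^*$ and $M(\cdot,\sigma,\tau)^*$ all refer to these composite-map adjoints, every step reduces to a single application of $(ST)^*=T^*S^*$ together with the self-adjointness of $Q$ and $\Lambda_\tau$, so that no estimate or limiting argument is required.
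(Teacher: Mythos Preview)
Your proof is correct and follows essentially the same idea as the paper's: both compute the adjoints by transposing the factorizations in \eqref{e:recall-4}, the paper via explicit $L^2$-pairings and you via the rule $(ST)^*=T^*S^*$ applied to the composite maps $H\to L^2(\tau,T;\cdot)$. Your version is in fact tidier, since you correctly track the minus sign in the $\Psi_i$-adjoints and the proper domain $L^2(\tau,T;U)$ for their argument $g$---points the paper's write-up handles loosely.
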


\begin{proof}
(i) With $f,g\in L^2(\tau,T;H)$ we have
\begin{equation}
\begin{split}
&\int_\tau^T \big\langle \Psi_1(\sigma,\tau)f(\sigma),g(\sigma)\big\rangle_H\,d\sigma 
= - \int_\tau^T \big\langle \big[\Lambda_\tau^{-1}L_\tau^*QF(\cdot,\tau)\big](\sigma) f(\sigma),g(\sigma)\big\rangle_H\,d\sigma 
\\
& \myspace = - \int_\tau^T \big\langle F(\sigma,\tau)f(\sigma),Q\big[L_\tau \Lambda_\tau^{-1}g\big](\sigma)\big\rangle_H\,d\sigma
\\
& \myspace = - \int_\tau^T \big\langle f(\sigma),F(\sigma,\tau)^*Q\big[L_\tau \Lambda_\tau^{-1} g\big](\sigma)\big\rangle_H\,d\sigma\,,
\end{split}
\end{equation}
which establishes the first one of \eqref{e:4-adjoints}.

\smallskip
\noindent
(ii) We just repeat the preceding computation with $M(\cdot,\sigma,\tau)$ in place
of $F(\cdot,\tau)$, and the second one of \eqref{e:4-adjoints} follows.

\smallskip
\noindent
(iii) Once again, given $f,g\in L^2(0,T;H)$ we have 
\begin{equation}
\begin{split}
&\int_\tau^T \big\langle Z_1(q,\tau)f(q),g(q)\big\rangle_H\,d\sigma 
= - \int_\tau^T \Big\langle \Big[F(q,\tau)+\big[L_\tau\Psi_1(\cdot,\tau)\big](q)\Big]f(q),g(q)\Big\rangle_H\,dq 
\\
& \myspace =  \int_\tau^T \big\langle f(q),F(q,\tau)^*g(q)
+ \Psi_1(q,\tau)^*\big[L_\tau^*g\big](q)\big\rangle_H\,dq 
\end{split}
\end{equation}
which establishes the third one of the \eqref{e:4-adjoints}.

\smallskip
\noindent
(iv) The verification of the last one of the identities \eqref{e:4-adjoints} easily follows
as in (iii), just replacing $F(\cdot,\tau)$ and $\Psi_1(\cdot,\tau)$ by $M(\cdot,s,\tau)$ and $\Psi_2(\cdot,s,\tau)$, respectively.  
\end{proof}

% SUBSUBSECTION

\subsubsection{Differentiation of the operators}

\begin{proposition} \label{p:4}
Let $\Psi_1(t,\tau)$ as in \eqref{e:recall-4}.
If $x\in \cD(A)$, then the derivative $\partial_\tau\Psi_1(t,\tau)x$ exists, and it is given by
% FORMULA
\begin{equation}\label{e:Psi1-derivative}
\partial_\tau\Psi_1(t,\tau)x
= -\big[\Lambda_\tau^{-1}L_\tau^*QF_\tau(\cdot,\tau)x\big](t)
+ \big[\Lambda_\tau^{-1}L_\tau^*QF(\cdot,\tau)B\Psi_1(\tau,\tau)x\big](t)\,.
\end{equation}

\end{proposition}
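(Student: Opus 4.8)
The plan is to work not from the explicit expression \eqref{e:def-psi1} but from the implicit (resolvent) relation it encodes. Applying $\Lambda_\tau$ to \eqref{e:def-psi1} gives, for $x\in\cD(A)$,
\[
\Psi_1(\cdot,\tau)x + L_\tau^* Q L_\tau \Psi_1(\cdot,\tau)x = -L_\tau^* Q F(\cdot,\tau)x\,,
\]
an identity in $L^2(\tau,T;U)$ which I shall refer to as $(\star)$. The advantage of $(\star)$ over \eqref{e:def-psi1} is that it no longer contains $\Lambda_\tau^{-1}$, so that once differentiability is in hand the only $\tau$-dependence to be tracked is the explicit one carried by $F(\cdot,\tau)$ and by the lower endpoint $\tau$ of the operator $L_\tau$ in \eqref{e:L_tau}; in particular $\tau$ enters neither the kernels $F(p,\sigma)$, $F(p,\sigma)^*$ of $L_\tau$, $L_\tau^*$, nor — at a fixed interior evaluation point $t>\tau$ — the outer domain of integration appearing in $L_\tau^*$.

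First I would establish that, for $x\in\cD(A)$, the map $\tau\mapsto\Psi_1(\cdot,\tau)x$ is differentiable and that $\Psi_1(\cdot,\tau)x$ is jointly continuous, so that the diagonal value $\Psi_1(\tau,\tau)x$ entering \eqref{e:Psi1-derivative} is meaningful. This rests on the joint continuity of $F$, $R$, $\mu$ recorded in Section~\ref{s:prelimin}, on the boundedness of $Q$ and $B$, and on the uniform invertibility $\Lambda_\tau\ge I$; comparing the intervals $[\tau,T]$ and $[\tau+h,T]$ by extending functions by zero to the fixed space $L^2(0,T)$, one checks that the difference quotients of $\Psi_1(\cdot,\tau)x$ converge. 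This differentiability step, together with the Leibniz rule for the moving lower limit used below, is the genuine technical obstacle; the algebra that follows it is forced.

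Granting this, I would differentiate $(\star)$ in $\tau$ at a fixed $t\in(\tau,T)$, term by term. On the right-hand side, Proposition~\ref{p:3} and \eqref{e:F-derivatives} give $\partial_\tau[-L_\tau^*QF(\cdot,\tau)x](t) = -[L_\tau^*QF_\tau(\cdot,\tau)x](t)$, the outer integral $\int_t^T$ contributing no boundary term since $t$ is fixed. In the quadratic term $L_\tau^*QL_\tau\Psi_1(\cdot,\tau)x$ the only $\tau$-dependent limit is the lower endpoint in $[L_\tau\Psi_1(\cdot,\tau)x](p)=\int_\tau^p F(p,r)B\,\Psi_1(r,\tau)x\,dr$; the Leibniz rule yields the boundary contribution $-[L_\tau^*QF(\cdot,\tau)B\,\Psi_1(\tau,\tau)x](t)$ together with the interior term $[L_\tau^*QL_\tau\,\partial_\tau\Psi_1(\cdot,\tau)x](t)$.

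Collecting these, moving the boundary contribution to the right, and recognising $I + L_\tau^*QL_\tau=\Lambda_\tau$, one arrives at
\[
\Lambda_\tau\big(\partial_\tau\Psi_1(\cdot,\tau)x\big) = -L_\tau^*QF_\tau(\cdot,\tau)x + L_\tau^*QF(\cdot,\tau)B\,\Psi_1(\tau,\tau)x\,.
\]
Since $\Lambda_\tau\ge I$ is boundedly invertible, applying $\Lambda_\tau^{-1}$ delivers \eqref{e:Psi1-derivative} exactly; the same inversion simultaneously confirms that the candidate right-hand side is precisely the limit obtained in the difference-quotient analysis, closing the differentiability argument.
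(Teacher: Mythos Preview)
Your proposal is correct and follows essentially the same approach as the paper: both start from the implicit relation $\Lambda_\tau\Psi_1(\cdot,\tau)x=-L_\tau^*QF(\cdot,\tau)x$, form the difference quotient in $\tau$, isolate on the left the operator $\Lambda_\tau$ applied to the increment of $\Psi_1$, and on the right the two terms (from $\partial_\tau F$ and from the moving lower endpoint of $L_\tau$) whose limits yield \eqref{e:Psi1-derivative} after inverting $\Lambda_\tau$. The paper carries out the difference-quotient computation explicitly (via Fubini, splitting $\int_\tau^t+\int_t^T$), whereas you package the same computation as a Leibniz-rule argument, but the underlying strategy and the key identification $I+L_\tau^*QL_\tau=\Lambda_\tau$ are identical.
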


\begin{proof}
The proof's strategy is as follows: we consider the implicit representation of 
$\Psi_1(t,\tau)$, that is (since $\Lambda_\tau=I+L_\tau^*QL_\tau $)
\begin{equation*}
\Psi_1(t,\tau)+\big[L_\tau^*QL_\tau \Psi_1(\cdot,\tau)\big](t)
= - \big[L_\tau^*QF(\cdot,\tau)\big](t)\,,
\end{equation*}
which explicitly reads as
\begin{equation*}
\begin{split}
& \Psi_1(t,\tau)+\int_t^T B^*F(p,t)^*Q\int_\tau^p F(p,\sigma)B\Psi_1(\sigma,\tau)\,d\sigma\,dp
\\[1mm]
& \myspace = - \int_t^TB^*F(p,t)^*Q F(p,\tau)\,dp\,.
\end{split}
\end{equation*}
The Fubini-Tonelli Theorem yields
\begin{equation*} 
\begin{split}
& \Psi_1(t,\tau)+\Big[\int_\tau^t\int_t^T+\int_t^T \int_\sigma^T \Big] B^*F(p,t)^*Q F(p,\sigma)B\Psi_1(\sigma,\tau)\,dp\,d\sigma
\\[1mm]
& \myspace = - \int_t^TB^*F(p,t)^*Q F(p,\tau)\,dp\,,
\end{split}
\end{equation*}
that is
\begin{equation} \label{e:start-prop4}
\begin{split}
& \Psi_1(t,\tau)
+\int_\tau^t\Big[\int_t^TB^*F(p,t)^*Q F(p,\sigma)B\,dp\Big]\Psi_1(\sigma,\tau)\,d\sigma
\\
& \myspace
+ \int_t^T \Big[\int_\sigma^T B^*F(p,t)^*Q F(p,\sigma)B\Big]\,dp\Psi_1(\sigma,\tau)\,d\sigma
\\[1mm]
& \qquad = - \int_t^TB^*F(p,t)^*Q F(p,\tau)\,dp\,.
\end{split}
\end{equation}

Taking \eqref{e:start-prop4} as a starting point, we compute for $x\in \cD(A)$ the incremental ratio of $\Phi_1(t,\cdot)x$ (with $h\ne 0$) to find the identity
 
\begin{equation*} 
\begin{split}
& \frac{\Psi_1(t,\tau+h)x-\Psi_1(t,\tau)x}{h} 
\\[1mm]
& \myspace
-\frac{1}{h}\int_\tau^{\tau+h}\Big[\int_t^TB^*F(p,t)^*Q F(p,\sigma)B\,dp\Big]\Psi_1(\sigma,\tau)x\,d\sigma
\\[1mm]
& \myspace
+\int_\tau^t\Big[\int_t^TB^*F(p,t)^*Q F(p,\sigma)B\,dp\Big]
\frac{\Psi_1(\sigma,\tau+h)x-\Psi_1(\sigma,\tau)x}{h}\,d\sigma
\\[1mm]
& \myspace
+ \int_t^T \Big[\int_\sigma^T B^*F(p,t)^*Q F(p,\sigma)B\Big]\,dp
\frac{\Psi_1(\sigma,\tau+h)x-\Psi_1(\sigma,\tau)x}{h}\,d\sigma
\\[1mm]
& \qquad = - \int_t^TB^*F(p,t)^*Q \,\frac{F(p,\tau+h)x-F(p,\tau)x}{h}\,dp\,.
\end{split}
\end{equation*}

Thus we proceed as before, but somewhat in the reverse direction: we

\begin{itemize}

\item
use once again the Fubini-Tonelli Theorem, this time to merge the third and fourth summands
in the left hand side,

\item
recognize that the sum of the first, third and fourth terms is nothing but 
$I+L_\tau^*QL_\tau=:\Lambda_\tau$ applied to $[\Psi_1(\sigma,\tau+h)x-\Psi_1(\sigma,\tau)x]/h$,

\item
move the second summand from the left to the right hand side,
\end{itemize}

\smallskip
\noindent
to attain
\begin{equation*}
\begin{split} 
\frac{\Psi_1(t,\tau+h)x-\Psi_1(t,\tau)x}{h}&= 
\underbrace{\Lambda_\tau^{-1}
\Big[-L_\tau^*Q \frac{F(\cdot,\tau+h)x-F(\cdot,\tau)x}{h}\Big](t)}_{T_1(t)x}
\\[1mm]
& \qquad +\underbrace{\Lambda_\tau^{-1}\Big[
\frac{1}{h}\int_\tau^{\tau+h}\big[L_\tau^*QF(\cdot,\sigma)B\,\Psi_1(\sigma,\tau)x\big](t)\,d\sigma\Big]}_{T_2(t)x}\,.
\end{split}
\end{equation*}

Now the existence of the limit, as $h\to 0$, of the incremential ratio of $\Psi_1(t,\cdot)x$, along with the formula \eqref{e:Psi2-derivative}, follows observing that, since $x \in\cD(A)$, we have
\begin{equation*}
\begin{cases}
T_1(t)x \to -\big[\Lambda_\tau^{-1}L_\tau^*Q \partial_\tau F(\cdot,\tau)x\big](t)\,,
\; \text{as $h\to 0$} & \hspace{-1cm} \text{(in view of Proposition~\ref{p:3}),} 
\\[1mm]
T_2(t)x\to \big[\Lambda_\tau^{-1}L_\tau^*QF(\cdot,\tau)B\Psi_1(\tau,\tau)x\big](t)\,, \; \text{as $h\to 0$.} & 
\end{cases}
\end{equation*}

This establishes \eqref{e:Psi1-derivative}, thus concluding the proof of the proposition.

\end{proof}

% Psi_2(t,sigma,tau) Prop. 6

\begin{proposition} \label{p:6}
Let $\Psi_2(t,\sigma,\tau)x$ as in \eqref{e:recall-4}. % (earlier, in \eqref{e:def-psi12}).
If $x\in \cD(A)$, then there exists $\partial_\tau\Psi_2(t,\sigma,\tau)x$ and it is given by
% FORMULA
\begin{equation}\label{e:Psi2-derivative}
\partial_\tau\Psi_2(t,\sigma,\tau)x
= -\big[\Lambda_\tau^{-1}L_\tau^*QM_\tau(\cdot,\sigma,\tau)x\big](t)
+ \big[\Lambda_\tau^{-1}L_\tau^*QF(\cdot,\tau)B\Psi_2(\tau,\sigma,\tau)x\big](t)\,.
\end{equation}

\end{proposition}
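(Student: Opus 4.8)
The plan is to repeat \emph{verbatim} the argument developed in the proof of Proposition~\ref{p:4}, with the operator $M(\cdot,\sigma,\tau)$ taking over the role played there by $F(\cdot,\tau)$ (the middle argument $\sigma$ being a fixed parameter throughout). First I would rewrite the defining relation \eqref{e:recall-4} for $\Psi_2$ in implicit form: since $\Lambda_\tau = I + L_\tau^*QL_\tau$, the function $\Psi_2(\cdot,\sigma,\tau)$ is characterized by
\begin{equation*}
\Psi_2(t,\sigma,\tau) + \big[L_\tau^*QL_\tau\Psi_2(\cdot,\sigma,\tau)\big](t) = -\big[L_\tau^*QM(\cdot,\sigma,\tau)\big](t)\,.
\end{equation*}
Writing the operators $L_\tau$ and $L_\tau^*$ explicitly (see \eqref{e:L_tau} and \eqref{e:L_tau-star}) produces a double integral in the second summand, which I would split by the Fubini--Tonelli Theorem into a contribution over the region corresponding to $\sigma\in[\tau,t]$ and one over $\sigma\in[t,T]$, exactly as in \eqref{e:start-prop4}.

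With this identity as the starting point, I would form the incremental ratio of $\Psi_2(t,\sigma,\cdot)x$ in the third variable, for a fixed $x\in\cD(A)$ and increment $h\ne 0$. Reading the resulting relation \emph{in the reverse direction} --- merging the two split integrals back together via Fubini--Tonelli --- reveals that the sum of the relevant incremental-ratio terms is precisely $\Lambda_\tau$ applied to $[\Psi_2(\cdot,\sigma,\tau+h)x-\Psi_2(\cdot,\sigma,\tau)x]/h$, while the perturbation of the lower endpoint $\tau$ of the outer integral leaves behind an isolated remainder of the form $\tfrac1h\int_\tau^{\tau+h}(\cdots)\,d\sigma$. Inverting $\Lambda_\tau$ then yields a decomposition of the incremental ratio into two terms $T_1(t)x$ and $T_2(t)x$ of exactly the same shape as those appearing in the proof of Proposition~\ref{p:4}.

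The passage to the limit $h\to 0$ is where Proposition~\ref{p:5} enters, and it is essentially the only substantive difference from the proof of Proposition~\ref{p:4}: the convergence
\begin{equation*}
T_1(t)x \longrightarrow -\big[\Lambda_\tau^{-1}L_\tau^*Q\,\partial_\tau M(\cdot,\sigma,\tau)x\big](t)
\end{equation*}
relies on the existence of $\partial_\tau M(\cdot,\sigma,\tau)x$ for $x\in\cD(A)$, which is exactly the content of Proposition~\ref{p:5}, now used in place of the appeal to Proposition~\ref{p:3} that served for $\Psi_1$. The remainder term $T_2(t)x$ converges to $[\Lambda_\tau^{-1}L_\tau^*QF(\cdot,\tau)B\Psi_2(\tau,\sigma,\tau)x](t)$ by the Lebesgue differentiation of the average over the shrinking interval $[\tau,\tau+h]$, using the continuity of $\Psi_2$ in its first variable recorded in the regularity analysis of Proposition~\ref{p:cont_P}. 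Combining the two limits gives \eqref{e:Psi2-derivative}.

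I expect the main (and essentially only) obstacle to be bookkeeping: one must verify that the boundedness and continuity of $M$, $\partial_\tau M$, $F$ and $\Psi_2$ are uniform enough that the incremental ratios converge in $L^2(\tau,T;U)$ and that $\Lambda_\tau^{-1}$ may be passed through the limit. Since $\Lambda_\tau\ge I$ renders $\Lambda_\tau^{-1}$ a contraction on $L^2(\tau,T;U)$, and all the kernels involved are square integrable in time with values in $\cL(H)$, no genuinely new estimate is required beyond those already assembled in Propositions~\ref{p:cont_P} and~\ref{p:5}.
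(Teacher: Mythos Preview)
Your proposal is correct and follows precisely the route indicated by the paper: the authors' own proof consists of the single remark that one proceeds \emph{mutatis mutandis} as in Proposition~\ref{p:4}, starting from the implicit equation for $\Psi_2$ and invoking Proposition~\ref{p:5} in place of Proposition~\ref{p:3} at the end. Your write-up is in fact more detailed than the paper's, which omits the computation entirely.
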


\begin{proof}
The proof of the proposition can be carried out employing, {\em mutatis mutandis}, a similar path as the one utilized in the proof of Proposition~\ref{p:4}, with the implicit equation satisfied by $\Psi_2(t,\sigma,\tau)$ as a starting point, and the use
of Proposition~\ref{p:5} -- in place of Proposition~\ref{p:3} -- at the end.
The details are omitted.

\end{proof}

A direct consequence of Propositions~\ref{p:4} and \ref{p:6} are the following results, whose respective proofs are straightforward and hence are omitted.

% z_1(t,tau) Prop. 7

\begin{proposition} \label{p:7}
Let $Z_1(t,\tau)$ as in \eqref{e:recall-4}.
Then $Z_1(\tau,\tau)=F(\tau,\tau)=I_H$.
If $x\in \cD(A)$, there exists $\partial_\tau Z_1(t,\tau)x$ and it is given by
% FORMULA
\begin{equation}\label{e:Z1-derivative}
\partial_\tau Z_1(t,\tau)x=
\partial_\tau F(t,\tau)x-F(t,\tau)B\Psi_1(\tau,\tau)x+\big[L_\tau \partial_\tau \Psi_1(\cdot,\tau)x\big](t).
\end{equation}

\end{proposition}

% z_2(t,sigma,tau) Prop. 8

\begin{proposition} \label{p:8}
Let $Z_2(t,s,\tau)$ as in \eqref{e:recall-4}.
Then, $Z_2(\tau,s,\tau)=M(\tau,s,\tau)=G(\tau,s,\tau) =0$.
If $x\in \cD(A)$, there exists $\partial_\tau Z_2(t,s,\tau)x$ and it is given by
% FORMULA
\begin{equation}\label{e:Z2-derivative}
\partial_\tau Z_2(t,s,\tau)x=
\partial_\tau M(t,s,\tau)x-F(t,\tau)B\Psi_2(\tau,s,\tau)x+\big[L_\tau\partial_\tau\Psi_2(\cdot,s,\tau)x\big](t).
\end{equation}

\end{proposition}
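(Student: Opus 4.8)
The plan is to dispose of the boundary values first and then to differentiate term by term, leaning on the defining relation \eqref{e:recall-4} for $Z_2$ together with the differentiability results already established for $M$ and $\Psi_2$. The whole argument runs in complete parallel with the treatment of $Z_1$ in Proposition~\ref{p:7}.

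First I would record the three equalities at $t=\tau$. The values $G(\tau,s,\tau)=0$ and $M(\tau,s,\tau)=0$ are precisely the two entries in the first columns of the bottom rows of \eqref{e:banali} (read with $\sigma=s$), hence are furnished directly by Lemma~\ref{l:0}. For $Z_2$ itself I would invoke the defining formula $Z_2(t,s,\tau)=M(t,s,\tau)+\int_\tau^t F(t,\sigma)B\Psi_2(\sigma,s,\tau)\,d\sigma$ from \eqref{e:recall-4} and simply set $t=\tau$: the integral collapses, since its lower and upper limits coincide, so that $Z_2(\tau,s,\tau)=M(\tau,s,\tau)=0$.

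Next, for the differentiation I would fix $x\in\cD(A)$ and differentiate that same defining relation with respect to $\tau$. The parameter $\tau$ enters in three places: through the summand $M(t,s,\tau)$, through the lower endpoint of the integral, and through the integrand $\Psi_2(\sigma,s,\tau)$. The term $\partial_\tau M(t,s,\tau)x$ exists by Proposition~\ref{p:5}, while $\partial_\tau\Psi_2(\sigma,s,\tau)x$ exists and is given by \eqref{e:Psi2-derivative} thanks to Proposition~\ref{p:6}. Applying the Leibniz rule to the integral term produces the endpoint contribution $-F(t,\tau)B\Psi_2(\tau,s,\tau)x$ (the minus sign coming from the moving lower limit) together with $\int_\tau^t F(t,\sigma)B\,\partial_\tau\Psi_2(\sigma,s,\tau)x\,d\sigma$; recognizing the latter integral as $[L_\tau\partial_\tau\Psi_2(\cdot,s,\tau)x](t)$ via the definition \eqref{e:L_tau} of $L_\tau$ then yields precisely \eqref{e:Z2-derivative}.

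The only point requiring care — and the one I expect to be the main (if modest) obstacle — is the justification of differentiating under the integral sign, i.e. the interchange of the $\tau$-derivative with the $\sigma$-integration. This can be handled exactly as in the detailed incremental-ratio computation carried out in the proof of Proposition~\ref{p:4}: one writes the difference quotient of $Z_2(t,s,\cdot)x$, splits off the endpoint contribution arising from the moving lower limit, and passes to the limit as the increment tends to $0$, using that for $x\in\cD(A)$ the difference quotients of $\Psi_2(\sigma,s,\cdot)x$ converge uniformly on the compact interval $[\tau,t]$ to $\partial_\tau\Psi_2(\sigma,s,\tau)x$ (Proposition~\ref{p:6}). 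With this interchange secured, the computation reproduces \eqref{e:Z2-derivative} verbatim, which concludes the proof.
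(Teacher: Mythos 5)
Your argument is correct and coincides with the (omitted) proof intended by the paper, which presents Proposition~\ref{p:8} as a straightforward direct consequence of the earlier differentiability results: the boundary values follow from Lemma~\ref{l:0} and the collapsing integral, and \eqref{e:Z2-derivative} is exactly the Leibniz-rule differentiation of the defining relation in \eqref{e:recall-4}, with $\partial_\tau M$ supplied by Proposition~\ref{p:5} and $\partial_\tau\Psi_2$ by Proposition~\ref{p:6}. Your care about interchanging the $\tau$-derivative with the $\sigma$-integration, handled by the incremental-ratio technique of Proposition~\ref{p:4}, is precisely the right justification for the step the paper leaves implicit.
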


\bigskip
% THANKS

{\small
\section*{Acknowledgements}
\noindent
% The research of P.~Acquistapace was partially supported by the PRIN-MIUR Project 2017FKHBA8 of the
% Italian Education, University and Research Ministry.
%\\
The research of F.~Bucci has been performed in the framework of the MIUR-PRIN Grant 2020F3NCPX
``Mathematics for industry 4.0 (Math4I4)''.
Bucci's research was also supported by the Universit\`a degli Studi di Firenze
under the 2022 Project {\em Analisi e controllo di equazioni di evoluzione che descrivono propagazione ondosa o viscoelasticit\`a}, of which she was responsible.
F.~Bucci is a member of the Gruppo Nazionale per l'Analisi Mate\-ma\-tica, la Probabilit\`a  e le loro Applicazioni (GNAMPA) of the Istituto Nazionale di Alta Matematica (INdAM). %,
%and has been participant to the GNAMPA Project 
%{\em Problemi inversi e di controllo per equazioni di evoluzione e loro applicazioni} %(2020, extended to 2021).
%
%\marginpar{\bf \scriptsize check LIA anni: 2018-2020}
%She has been a member of the French-German-Italian Laboratoire International Associ\'e 
%(LIA) COPDESC in Applied Analysis.
}

\medskip

% REFERENCES

% THE END
\end{document}